\DeclareMathOperator*{\topmodule}{mod}
\newcommand{\Adele}{\ensuremath{\mathbb{A}}}
\newcommand{\nonarchimedean}{non-Archimedean} 
\newcommand{\isomorphic}{\ensuremath{\cong}}
\newcommand{\mathe}{\ensuremath{\mathrm{e}}}
\newcommand{\rconj}[1]{\ensuremath{{}^{#1}}}
\newcommand{\transpose}[1]{\ensuremath{\rconj{t}#1}}
\def\Alg{\operatorname{Alg}}
\def\Ind{\operatorname{Ind}}
\def\ind{\operatorname{ind}}
\def\bsl{\backslash}
\def\NN{{\mathbb N}}
\def\QQ{{\mathbb Q}}
\def\CC{{\mathbb C}}
\def\GL{\mathrm{GL}}
\def\Hom{\mathrm{Hom}}
\def\Tri{\mathrm{Tri}}
\def\today{\ifcase\month\or
 January\or February\or March\or April\or May\or June\or
 July\or August\or September\or October\or November\or December\fi
 \space\number\day, \number\year}
\newtheorem{theorem}{Theorem}
\newtheorem{conjecture}{Conjecture}
\newtheorem{lemma}{Lemma}[section]
\newtheorem{proposition}[lemma]{Proposition}
\newtheorem{corollary}[lemma]{Corollary}
\newtheorem{claim}[lemma]{Claim}
\newtheorem{remark}{Remark}[section]
\newtheorem{example}{Example}[section]
\numberwithin{equation}{section}
\begin{document}

\title[Distinguished representations]{Representations distinguished by pairs of exceptional representations and a conjecture of Savin}

\author{Eyal Kaplan}
\email{kaplaney@gmail.com}

\keywords{Savin's conjecture, distinguished representations, exceptional representations}
\thanks{The author is partially supported by the ISF Center of Excellence grant \#1691/10.}

\begin{abstract}
We study representations of $\GL_n$ appearing as quotients of a tensor of exceptional representations, in the
sense of Kazhdan and Patterson. Such representations are called distinguished. We characterize distinguished principal series representations in terms of their inducing data. In particular, we complete the proof of a conjecture of Savin, relating distinguished spherical
representations to the image of the tautological lift from a suitable classical group.
\end{abstract}

\maketitle

\section{Introduction}\label{section:introduction}
Let $F$ be a local \nonarchimedean\ field of characteristic different from $2$. Let $\tau$ be an admissible representation
of $\GL_n(F)$ and let $\theta$ and $\theta'$ be a pair of exceptional representations in the sense of Kazhdan and Patterson \cite{KP}, of the metaplectic double cover $\widetilde{\GL}_n(F)$
of $\GL_n(F)$. The representation $\tau$ is called distinguished if there is a nonzero trilinear form on the space of $\tau\times\theta\times\theta'$, which is $\GL_n(F)$-invariant. Equivalently,
\begin{align}\label{space:hom space}
\Hom_{\GL_n(F)}(\theta\otimes\theta',\tau^{\vee})\ne0.
\end{align}
Here $\tau^{\vee}$ is the representation contragradient to $\tau$.

We study distinguished representations. The main result of this work is the following combinatorial characterization, of
irreducible distinguished principal series representations. We say that a character $\eta=\eta_1\otimes\ldots\otimes\eta_n$ of the diagonal torus satisfies
condition $(\star)$ if, up to a permutation of the characters $\eta_i$, there is $0\leq k\leq\lfloor n/2\rfloor$ such that
\begin{itemize}
\item $\eta_{2i}=\eta_{2i-1}^{-1}$ for $1\leq i\leq k$,
\item $\eta_i^2=1$ for $2k+1\leq i\leq n$.
\end{itemize}

\begin{theorem}\label{theorem:distinguished principal series}
Let $\tau$ be a principal series representation of $\GL_n(F)$, induced from the character
$\eta$. If $\tau$ is distinguished, $\eta$ satisfies $(\star)$. Conversely, if $(\star)$ holds and $\tau$ is irreducible,
then $\tau$ is distinguished.
\end{theorem}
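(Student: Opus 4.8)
The plan is to reduce \eqref{space:hom space} to a statement about the Jacquet module of $\theta\otimes\theta'$ along the Borel subgroup, and then resolve it by a recursion on $n$. Write $B=TN$ for the Borel and $\widetilde B,\widetilde T$ for its preimages in $\widetilde{\GL}_n(F)$; regard $\theta\otimes\theta'$ as a representation of $\GL_n(F)$ (the central $\mu_2$ acts trivially). Since $\tau^{\vee}=\Ind_B^{\GL_n}\eta^{-1}$ (normalized induction), Frobenius reciprocity gives
\[
\Hom_{\GL_n}(\theta\otimes\theta',\tau^{\vee})\;\cong\;\Hom_T\!\big(r_B(\theta\otimes\theta'),\ \eta^{-1}\big),
\]
with $r_B$ the normalized Jacquet functor; equivalently, pushing the induction defining $\tau$ to the outside and using the projection formula and second adjointness, the space is a $\Hom_{\widetilde B}$ from $\theta|_{\widetilde B}$ into an $\eta^{-1}$- and modulus-twist of $\theta''|_{\widetilde B}$ for a suitable third exceptional representation $\theta''$. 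So the problem is to identify which characters of $T$ occur in $r_B(\theta\otimes\theta')$, and the tools are the structure results for Kazhdan--Patterson exceptional representations: $\theta$ is the image of the long intertwining operator on the genuine principal series of $\widetilde{\GL}_n(F)$ attached to an explicit exceptional genuine representation of $\widetilde T$; $r_P(\theta)$ along a standard parabolic with Levi $\prod_i\GL_{n_i}$ is a modulus twist of the corresponding product of the exceptional representations of the $\widetilde{\GL}_{n_i}(F)$; and the Bernstein--Zelevinsky derivatives of $\theta$ are again (twists of) exceptional representations of smaller general linear groups.

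For necessity, assume $\tau$ is distinguished, so $\eta^{-1}$ occurs in $r_B(\theta\otimes\theta')$. I would peel off maximal parabolics one at a time. Restricting $\theta$ to a maximal parabolic $P$ and filtering the restriction by $r_P(\theta)$ and the higher derivatives of $\theta$ yields a filtration of $r_P(\theta\otimes\theta')$ whose graded pieces are, recursively, of two kinds: a ``$\GL_1$ piece'' carrying on one $\GL_1$-factor a tensor of two exceptional representations of $\widetilde{\GL}_1(F)$, and a ``$\GL_2$ piece'' carrying on one $\GL_2$-factor a tensor of two exceptional representations of $\widetilde{\GL}_2(F)$ (in the open-cell pieces one meets the restrictions of these smaller exceptional representations to a Borel, not their Jacquet modules). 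Two base computations then drive the conclusion: as a representation of $\GL_1=F^\times$, a tensor of two exceptional representations of $\widetilde{\GL}_1(F)$ is the direct sum of the quadratic characters of $F^\times$; and for $\widetilde{\GL}_2(F)$, a tensor of two exceptional representations forces the two relevant coordinates of the inducing character of a distinguished principal series to be inverse to one another. Carrying the modulus characters along through the recursion, one finds that any $\eta^{-1}$ occurring in $r_B(\theta\otimes\theta')$ has coordinates that, after a permutation, pair off as $\nu_i,\nu_i^{-1}$ for $1\le i\le k$ and are quadratic for $2k+1\le i\le n$; i.e.\ $\eta$ satisfies $(\star)$. (This is consistent with $(\star)$ being stable under twisting $\eta$ by a quadratic character, mirroring the freedom in the choice of the pair $\theta,\theta'$.)

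For the converse, suppose $\eta$ satisfies $(\star)$ with parameter $k$ and $\tau$ is irreducible. Then $\Ind_B(\eta)\cong\Ind_B(w\eta)$ for every Weyl element $w$, so after a permutation, induction in stages gives $\tau=\Ind_Q^{\GL_n}\big(\sigma_1\otimes\cdots\otimes\sigma_k\otimes\xi_{2k+1}\otimes\cdots\otimes\xi_n\big)$, where $Q$ has Levi $\GL_2^{\,k}\times\GL_1^{\,n-2k}$, each $\sigma_i=\Ind_{B_2}^{\GL_2}(\eta_{2i-1}\otimes\eta_{2i-1}^{-1})$, and each $\xi_j$ is quadratic. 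Frobenius reciprocity plus the description of $r_Q(\theta\otimes\theta')$ from the first step reduces the claim to the base cases: that $\Ind_{B_2}^{\GL_2}(\mu\otimes\mu^{-1})$ is a quotient of a tensor of two exceptional representations of $\widetilde{\GL}_2(F)$ for every character $\mu$---which follows from the two-cell Mackey filtration of the relevant Jacquet module, whose open cell supplies the required continuous family of characters $\mu\otimes\mu^{-1}$---and that every quadratic character of $F^\times$ is a quotient of a tensor of two exceptional representations of $\widetilde{\GL}_1(F)$. The delicate point is that the modulus characters arising when $r_Q(\theta\otimes\theta')$ is compared with tensors over the $\GL_2$- and $\GL_1$-factors cancel precisely because of the shape of $(\star)$---honest inverse pairs $\nu,\nu^{-1}$ and honest quadratic characters---so the base-case statements apply without a residual twist, and $\Hom_{\GL_n}(\theta\otimes\theta',\tau^{\vee})\ne 0$.

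The principal obstacle is the geometric-lemma analysis underlying the first two steps: enumerating and identifying all graded pieces of $r_B(\theta\otimes\theta')$, controlling the open-cell pieces---which involve restrictions, rather than Jacquet modules, of the lower-rank exceptional representations and are the source of the continuous $\GL_2$-families $\nu\otimes\nu^{-1}$ (the closed cell alone contributing only finitely many such $\nu$)---and, most of all, tracking the modulus characters through the recursion so that the final list of occurring characters is exactly $\{\eta^{-1}:\eta\ \text{satisfies}\ (\star)\}$. The $\widetilde{\GL}_2(F)$ and $\widetilde{\GL}_1(F)$ base cases enter both directions and should be established first; the irreducibility hypothesis in the converse is used precisely to realize $\tau^{\vee}$ as the explicit induced representation above and to deduce distinguishedness of $\tau$ from nonvanishing of the $\Hom$ space.
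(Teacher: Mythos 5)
Your overall blueprint---Frobenius reciprocity reducing to $r_B(\theta\otimes\theta')$, a recursion on $n$, $\GL_1$ and $\GL_2$ base cases, and heredity for the sufficiency direction---is the strategy the paper pursues. But as written the proposal has two genuine gaps, one in each direction, plus it skips the hardest bookkeeping.

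For the sufficiency direction, you reduce to the base cases by "Frobenius reciprocity plus the description of $r_Q(\theta\otimes\theta')$." That only gives you composition factors of the Jacquet module; knowing $\eta^{-1}$ occurs as a subquotient of $r_B(\theta\otimes\theta')$ does not produce a nonzero $T_n$-map out of $r_B(\theta\otimes\theta')$, hence does not produce a nonzero element of $\Hom_{\GL_n}(\theta\otimes\theta',\tau^{\vee})$. The paper sidesteps this by \emph{constructing} a trilinear form directly (Theorem~\ref{theorem:upper heredity of dist}): starting from invariant forms on the inducing data, one integrates over $Q^{\star}\backslash\GL_n$ where $Q^{\star}$ is built from $\GL_{n_1}\times\GL_{n_2}^{\square}$. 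This is where the metaplectic-tensor subtleties (commuting only on $\square$-subgroups, Claim~\ref{claim:Mackey theory applied to metaplectic tensor}, the choice of irreducible summand $\sigma$ of $\theta^{\square}_{n_2}$, the finiteness of $M^{\star}\backslash M$) are essential; you never engage with them, and they are precisely what makes the form well-defined and nonzero.

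For the necessity direction, the largest omission is the central-character obstruction when $n$ is even: $Z_n$ is not central in $\widetilde{\GL}_n$, so the derivative-type pieces do not admit a central character and the "peel off a factor" step fails without additional input. The paper's workaround is to structure the induction over parity (Lemma~\ref{lemma:reducing n by 1 when b starts with 1} for $c_1=1$, Lemma~\ref{lemma:only if direction, odd n or even with restriction} only under the assumption $\eta_n^2=1$, and the trick in case (3) of Proposition~\ref{proposition:only if direction, for any principal series} of appending a $\GL_1$-factor via heredity when $\eta_{n+1}^2\ne 1$). You also gesture at the filtration without naming the actual engine: the identification of $\theta|_{\widetilde Y_n}(U_{n-1,1})$ with $\Phi^+\Psi^+$ of the second derivative, using Kable's computation and the hypothesis that the second derivative is the top derivative (the no-Whittaker-model assumption), followed by the $\mathcal{E}_{j,i}$-filtration of Section~\ref{section:geometric} and Lemma~\ref{lemma:Jacquet functor without character} to compute the torus action. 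These are the steps that actually produce the inverse pairs $\eta_{n-i+1}=\eta_n^{-1}$ in the odd case and only $\eta_{n-i+1}^2=\eta_n^{-2}$ in the even case; without them the claimed reduction to $n-2$ is not established.
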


The main corollary of this theorem, is the validity of the ``only if" part of the following conjecture of Savin \cite{Savin3}.
\begin{theorem}\label{theorem:Savin's conjecture}
Let $\tau$ be a spherical representation of $\GL_n(F)$, i.e., an irreducible unramified quotient of some principal series representation, with a trivial central character. Then $\tau$ is distinguished if and only if $\tau$ is the lift of a representation
of (split) $SO_{2\lfloor n/2\rfloor}(F)$ if $n$ is even or $Sp_{2\lfloor n/2\rfloor}(F)$ if $n$ is odd.
\end{theorem}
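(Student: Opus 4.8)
The plan is to deduce Theorem~\ref{theorem:Savin's conjecture} from Theorem~\ref{theorem:distinguished principal series} together with the unramified local Langlands correspondence. First I would set up the dictionary. A spherical representation $\tau$ of $\GL_n(F)$ with trivial central character is the multiplicity-one unramified constituent of some principal series $\Ind(\eta)$, with $\eta=\eta_1\otimes\cdots\otimes\eta_n$ an unramified character of the diagonal torus satisfying $\prod_i\eta_i=1$, and $\tau$ is pinned down by its Satake parameter $t_\tau$, the semisimple conjugacy class in $\GL_n(\CC)$ with eigenvalue multiset $\{\eta_1(\varpi),\ldots,\eta_n(\varpi)\}$; triviality of the central character is exactly $\det t_\tau=1$. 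On the classical side, write $m=\lfloor n/2\rfloor$; the dual group of split $SO_{2m}$ is $SO_{2m}(\CC)$ and that of $Sp_{2m}$ is $SO_{2m+1}(\CC)$, and in each case the tautological lift to $\GL_n$ (with $n=2m$, resp.\ $n=2m+1$) is the functorial transfer attached to the standard embedding $SO_n(\CC)\hookrightarrow\GL_n(\CC)$. Hence, by unramified functoriality, $\tau$ is in the image of the tautological lift from the relevant classical group if and only if $t_\tau$ is conjugate into $SO_n(\CC)$, equivalently, its eigenvalue multiset is stable under $x\mapsto x^{-1}$ and $\det t_\tau=1$ (such a multiset is precisely one built from $2$-dimensional hyperbolic blocks $\{\mu,\mu^{-1}\}$ together with $\pm1$-eigenspaces, the eigenvalue $-1$ occurring with even multiplicity).

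The substantive new assertion is the ``only if'' direction. Suppose $\tau$ is spherical, distinguished, and realized as a quotient of $\Ind(\eta)$. By \eqref{space:hom space}, $\Hom_{\GL_n(F)}(\theta\otimes\theta',\tau^{\vee})\ne0$. Since $\tau$ is a quotient of $\Ind(\eta)$, its contragredient embeds, $\tau^{\vee}\hookrightarrow\Ind(\eta)^{\vee}$, so applying the left-exact functor $\Hom_{\GL_n(F)}(\theta\otimes\theta',-)$ gives $\Hom_{\GL_n(F)}(\theta\otimes\theta',\Ind(\eta)^{\vee})\ne0$; that is, the principal series $\Ind(\eta)$ is itself distinguished. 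Theorem~\ref{theorem:distinguished principal series} then forces $\eta$ to satisfy $(\star)$: after permuting the $\eta_i$, one has $\eta_{2i}=\eta_{2i-1}^{-1}$ for $1\le i\le k$ and $\eta_i^2=1$ for $2k<i\le n$, the latter giving $\eta_i(\varpi)=\pm1$ as $\eta_i$ is unramified. Therefore the eigenvalue multiset $\{\eta_1(\varpi),\ldots,\eta_n(\varpi)\}$ of $t_\tau$ is stable under inversion, and $\det t_\tau=1$ because $\tau$ has trivial central character. By the dictionary above, $\tau$ lies in the image of the tautological lift from $SO_{2m}(F)$ when $n$ is even and from $Sp_{2m}(F)$ when $n$ is odd.

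For the ``if'' direction I would invoke Savin's construction in \cite{Savin3}, which produces the required $\GL_n(F)$-invariant trilinear form on $\tau\times\theta\times\theta'$ whenever $\tau$ is such a tautological lift; when moreover $\Ind(\eta)$ is irreducible, so that $\tau=\Ind(\eta)$, this also follows directly from the converse half of Theorem~\ref{theorem:distinguished principal series}, since the Satake-parameter description shows $(\star)$ holds for $\eta$. Combining the two directions gives the equivalence of Theorem~\ref{theorem:Savin's conjecture}.

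Granting Theorem~\ref{theorem:distinguished principal series}, where the real work lies, the remaining difficulties are of a bookkeeping nature. The point that needs care is the precise translation of ``$\eta$ satisfies $(\star)$ and $\tau$ has trivial central character'' into ``$t_\tau$ is conjugate into $SO_n(\CC)$'': one must track the parity of the number of $-1$ eigenvalues (this is where the trivial-central-character hypothesis enters, pinning down $SO$ rather than $O$) and treat the cases $n$ even and $n$ odd separately (for odd $n$ an eigenvalue $1$ is forced, with odd multiplicity), and one must know that every semisimple class in $SO_n(\CC)$ really is the Satake parameter of an unramified representation of $SO_{2m}(F)$ or $Sp_{2m}(F)$, so that the image of the tautological lift is exactly the set described. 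The only genuinely representation-theoretic subtlety in the ``only if'' step, that $\tau$ may be a proper quotient of $\Ind(\eta)$ rather than all of it, is circumvented by the embedding $\tau^{\vee}\hookrightarrow\Ind(\eta)^{\vee}$ and left-exactness of $\Hom$, which reduces that direction entirely to the principal series statement.
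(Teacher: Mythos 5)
Your ``only if'' argument is essentially the paper's: since $\tau$ is a quotient of $\Ind_{B_n}^{\GL_n}(\eta)$, one gets $\tau^{\vee}\hookrightarrow\Ind_{B_n}^{\GL_n}(\eta^{-1})$, hence $\Hom_{\GL_n}(\theta\otimes\theta',\Ind_{B_n}^{\GL_n}(\eta^{-1}))\ne0$, Theorem~\ref{theorem:distinguished principal series} forces $(\star)$, and then the bookkeeping with the two unramified square-trivial characters and the trivial central character identifies the Satake parameter as lying in $SO_n(\CC)$; the paper phrases this by rearranging $\eta$ into the palindromic form $\eta_1\otimes\ldots\otimes\eta_{\lfloor n/2\rfloor}\otimes\eta_0\otimes\eta_{\lfloor n/2\rfloor}^{-1}\otimes\ldots\otimes\eta_1^{-1}$ and citing \cite{Kable2} Section~6 for the description of the tautological lift, which is the same content as your dual-group dictionary.

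The gap is in the ``if'' direction. Savin's paper \cite{Savin3} treats only $n=3$ (it analyzes trilinear forms for $\GL_3$), so it does not ``produce the required trilinear form'' for general $n$; the ``if'' part for arbitrary $n$ is a theorem of Kable \cite{Kable2}, proved by a genuinely different (analytic) method: a pseudo-integral on $\tau\times\theta\times\theta'$ in a cone of convergence, extended to all unramified data via Bernstein's continuation principle, resting on the one-dimensionality result of \cite{Kable}. Your fallback through the converse half of Theorem~\ref{theorem:distinguished principal series} only applies when $\Ind_{B_n}^{\GL_n}(\eta)$ is irreducible, and this excludes precisely the spherical lifts that are proper quotients of reducible principal series --- e.g.\ the trivial representation of $\GL_2$, whose Satake parameter $\{q^{1/2},q^{-1/2}\}$ is inversion-stable with determinant $1$, so it is a lift from $SO_2(F)$, yet $\eta=|\cdot|^{1/2}\otimes|\cdot|^{-1/2}$ gives a reducible principal series. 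So as written the ``if'' direction is not established; it needs the citation to \cite{Kable2} (or a new argument), exactly as in the paper's proof. A smaller point you partly flag yourself: the translation ``inversion-stable multiset with $\det=1$ implies conjugate into $SO_n(\CC)$ and in the image of the lift'' requires the parity count of $-1$'s and the surjectivity of the Satake parametrization for $SO_{2\lfloor n/2\rfloor}(F)$, resp.\ $Sp_{2\lfloor n/2\rfloor}(F)$; this is the content the paper delegates to \cite{Kable2} Section~6 rather than reproving.
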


Both theorems currently hold under the plausible assumption that for $n>3$, the exceptional representations do not have Whittaker models. This was proved by Kazhdan and Patterson for local fields of odd residual characteristic (\cite{KP} Section~I.3, see also \cite{Kable} Theorem~5.4). The remaining case - a field of characteristic $0$ and even residual characteristic, is expected to be completed through progress in the trace formula (\cite{BG} p.~138, see \cite{FKS} Lemma~6).

For $n=3$, Theorem~\ref{theorem:Savin's conjecture} was proved (unconditionally) by Savin \cite{Savin3},
who analyzed the dimension of \eqref{space:hom space} for an arbitrary irreducible quotient of a principal series representation of $\GL_3(F)$.

The ``if" part of Theorem~\ref{theorem:Savin's conjecture} (for any $n$) was proved by Kable \cite{Kable2}. We briefly describe his approach. Kable used analytic techniques, resembling the methods of \cite{CS2,BFF}. He started with a ``pseudo integral" $\Upsilon$ on $\tau\times\theta\times\theta'$ satisfying the equivariance properties, defined for inducing data $\eta$ in an appropriate cone of convergence. As a function of $\eta$, $\Upsilon$ was a polynomial in the Satake parameters of $\eta$ and $\eta^{-1}$. Using Bernstein's continuation principle (\cite{Banks}) $\Upsilon$ was extended to all characters. The key for using this principle is a one-dimensionality result, namely, the space \eqref{space:hom space} is at most one dimensional, for a ``large enough"
subset of characters. This was proved by Kable \cite{Kable} (Theorem~6.4) using the theory of derivatives of Bernstein and Zelevinsky \cite{BZ1,BZ2}. The assumption concerning the absence of Whittaker models for $n>3$ was needed also for his results.
We also mention that Kable \cite{Kable} (Theorem~6.3) already proved that if a principal series representation is distinguished,
the character $\eta$ satisfies a combinatorial condition, different from $(\star)$ in that $\eta_{2i}$ and $\eta_{2i-1}^{-1}$ only agree on $(F^*)^2$, that is, $\eta_{2i}^2=\eta_{2i-1}^{-2}$. The methods used for his proof seem insufficient to deduce the precise result (\cite{Kable2} p.~1602).

Condition $(\star)$ implies that distinguished representations do not enjoy complete hereditary properties, in the sense that
induction from distinguished representations does not exhaust all distinguished representations. We do have upper heredity, given by the following result.
\begin{theorem}\label{theorem:upper heredity of dist}
Let $\tau_1$ and $\tau_2$ be a pair of distinguished representations of $\GL_{n_1}(F)$ and $\GL_{n_2}(F)$. The representation $\tau$ 
parabolically induced from $\tau_1\otimes\tau_2$ is distinguished. 
\end{theorem}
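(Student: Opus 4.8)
\emph{Proof strategy.} Write $P=MU$ for the standard parabolic of $\GL_n(F)$ with Levi $M=\GL_{n_1}(F)\times\GL_{n_2}(F)$, so that $\tau=\Ind_P^{\GL_n(F)}(\tau_1\otimes\tau_2)$ (normalized parabolic induction) and $\tau^{\vee}\isomorphic\Ind_P^{\GL_n(F)}(\tau_1^{\vee}\otimes\tau_2^{\vee})$. The plan is to push the question down to the Levi. The tensor $\theta\otimes\theta'$ of two genuine representations of $\widetilde{\GL}_n(F)$ is trivial on the kernel of $\widetilde{\GL}_n(F)\to\GL_n(F)$, hence factors through a representation of $\GL_n(F)$, and Frobenius reciprocity then gives
\[
\Hom_{\GL_n(F)}\big(\theta\otimes\theta',\,\tau^{\vee}\big)\;\isomorphic\;\Hom_{M}\big(r_P(\theta\otimes\theta'),\,\tau_1^{\vee}\otimes\tau_2^{\vee}\big),
\]
where $r_P$ is the normalized Jacquet functor. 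So it suffices to produce one nonzero $M$-map on the right-hand side.

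The main ingredient I would need is a description of the Jacquet module of a tensor of \emph{two} exceptional representations: that $r_P(\theta\otimes\theta')$ has $(\theta_{n_1}\otimes\theta'_{n_1})\otimes(\theta_{n_2}\otimes\theta'_{n_2})$ as a quotient, where $\theta_{n_i}$ and $\theta'_{n_i}$ are the exceptional representations of $\widetilde{\GL}_{n_i}(F)$ determined by $\theta$ and $\theta'$ (namely, the ones occurring in their Jacquet modules along $U$), with respect to which distinguishedness of $\tau_i$ is taken. I would extract this from the description of the Jacquet modules --- equivalently the Bernstein--Zelevinsky derivatives --- of exceptional representations in \cite{KP} and \cite{Kable}: the Jacquet module along $U$ of a single exceptional $\theta$ is, up to a power of $\delta_P$, the metaplectic tensor product of $\theta_{n_1}$ and $\theta_{n_2}$ over the block-compatible cover of $M$; one then analyzes the Jacquet module of the \emph{product} (which is not the product of the Jacquet modules) and checks that the metaplectic twistings contributed by the two factors cancel --- so that the result descends to an honest representation of $M=\GL_{n_1}(F)\times\GL_{n_2}(F)$ --- and that the accumulated powers of $\delta_P$ cancel against the $\delta_P^{-1/2}$ in the definition of $r_P$ and against the normalization of the $\theta_{n_i},\theta'_{n_i}$, leaving exactly $(\theta_{n_1}\otimes\theta'_{n_1})\otimes(\theta_{n_2}\otimes\theta'_{n_2})$.

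Granting this, the rest is formal. Since $\tau_i$ is distinguished there is a nonzero $T_i\in\Hom_{\GL_{n_i}(F)}(\theta_{n_i}\otimes\theta'_{n_i},\tau_i^{\vee})$, and $T_1\otimes T_2$ is then a nonzero $M$-map $(\theta_{n_1}\otimes\theta'_{n_1})\otimes(\theta_{n_2}\otimes\theta'_{n_2})\to\tau_1^{\vee}\otimes\tau_2^{\vee}$, since over a field a tensor product of nonzero linear maps is nonzero. Precomposing with the surjection of the previous paragraph produces a nonzero element of $\Hom_{M}(r_P(\theta\otimes\theta'),\tau_1^{\vee}\otimes\tau_2^{\vee})$, hence, by the displayed isomorphism, a nonzero element of $\Hom_{\GL_n(F)}(\theta\otimes\theta',\tau^{\vee})$, so $\tau$ is distinguished. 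In the more constructive language of \cite{Kable2}, one can instead write the resulting trilinear form as an integral over the compact quotient $P\bsl\GL_n(F)$ of $(\ell_1\otimes\ell_2)\big(f(g);\pi_U(g\cdot v);\pi'_U(g\cdot v')\big)$, where $\ell_i$ realizes the distinguishedness of $\tau_i$ and $\pi_U,\pi'_U$ project $\theta,\theta'$ onto the relevant constituents of their Jacquet modules along $U$; the $\GL_n(F)$-invariance is exactly the matching of modulus characters, and nonvanishing is obtained by supporting $f$ near the identity coset.

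The one point that is genuinely not formal --- and the main obstacle --- is the structural claim of the second paragraph: controlling the Jacquet module of the \emph{product} $\theta\otimes\theta'$ rather than of a single exceptional representation, keeping exact track of the block-compatible metaplectic cover of $M$ so that the two metaplectic tensor products collapse to an ordinary tensor product over $M$, and verifying that every $\delta_P$-factor cancels so that the target is precisely $\tau_1^{\vee}\otimes\tau_2^{\vee}$ with no spurious twist.
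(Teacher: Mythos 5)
Your overall strategy is reasonable, and you correctly identify where the difficulty lies, but the structural claim you isolate as the ``main obstacle'' is not merely unproved---it is likely false as stated, and the paper's actual proof is designed precisely to avoid ever having to prove it. By Kable's results, $j_{U}(\theta)$ is $\delta_{Q_{n_1,n_2}}^{-1/4}$ times the \emph{metaplectic} tensor $\theta_{n_1}\widetilde{\otimes}_{\gamma}\theta_{n_2}$, which only ``untangles'' into an honest tensor $\theta_{n_1}\otimes\sigma$ over the subgroup $M^{\star}=\GL_{n_1}\times\GL_{n_2}^{\square}$ of the Levi, not over $M$ itself. When you form $j_{U}(\theta)\otimes j_{U}(\theta')$ and restrict to $M^{\star}$, Mackey theory (as in Claim~\ref{claim:Mackey theory applied to metaplectic tensor}) gives a sum over pairs $(g,g')$ in $M^{\star}\backslash M$ of terms $\chi_g\chi_{g'}(\theta_{n_1}\otimes\theta_{n_1}')\otimes(\rconj{g}\sigma\otimes\rconj{g'}\sigma')$, with nontrivial character twists $\chi_g\chi_{g'}$ on the $\GL_{n_1}$-factor whenever $g\ne g'$. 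There is no clean surjection of $M$-modules $r_P(\theta\otimes\theta')\twoheadrightarrow(\theta_{n_1}\otimes\theta_{n_1}')\otimes(\theta_{n_2}\otimes\theta_{n_2}')$; what you actually control is an $M^{\star}$-subquotient with only the diagonal $g=g'$ pieces untwisted.

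The paper's proof sidesteps the problem: rather than proving a quotient statement about $r_P(\theta\otimes\theta')$ and invoking Frobenius reciprocity at the level of $P$, it takes the given trilinear form $L$ on the Levi \emph{restricted to $M^{\star}$} (which only requires the ordinary tensor $\theta_{n_1,1,\gamma_1}\otimes\sigma$, available thanks to Claim~\ref{claim:Mackey theory applied to metaplectic tensor}), and integrates $L(f(g),\varphi(g),\varphi'(g))$ over $Q_{n_1,n_2}^{\star}\backslash\GL_n$, not over $P\backslash\GL_n$. The integrand is only $Q^{\star}$-covariant, not $P$-covariant---so your suggested integral over $P\backslash\GL_n$ in the last paragraph is not even well-defined. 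Absolute convergence over the larger quotient $Q^{\star}\backslash\GL_n$ is established via Iwasawa decomposition and the finiteness of $M^{\star}\backslash M$. Nonvanishing is then achieved by an explicit choice: $\varphi$ and $\varphi'$ are taken (using irreducibility of $\theta_n$, via \eqref{eq:heredity requirement from varphi}) so that their restrictions to $M$ vanish off $M^{\star}$, which makes the finite sum over $M^{\star}\backslash M$ collapse to the identity coset and kills exactly the cross-terms $g\ne g'$ that obstruct your quotient claim. That explicit choice of test vectors is the idea your proposal is missing.
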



Theorems~\ref{theorem:distinguished principal series} and \ref{theorem:upper heredity of dist} and similar results (e.g. \cite{Matringe,FLO}, see below) motivate the following conjecture, giving the combinatorial characterization of irreducible generic distinguished representations.
\begin{conjecture}\label{conjecture:combinatorial classification}
Let $\tau$ be an irreducible generic representation of $\GL_n(F)$. Then
$\tau$ is distinguished if and only if $\tau$ is isomorphic to a representation parabolically induced from a representation $\Delta_1\otimes\ldots\otimes\Delta_m$ of $\GL_{n_1}(F)\times\ldots\times\GL_{n_m}(F)$ with the following properties:
\begin{itemize}[leftmargin=*]
\item Each $\Delta_i$ is essentially square integrable,
\item There is $0\leq k\leq\lfloor n/2\rfloor$ such that $\Delta_{2i}=\Delta_{2i-1}^{\vee}$ for $1\leq i\leq k$,
\item The representation $\Delta_i$ is distinguished for $2k+1\leq i\leq n$.
\end{itemize}
\end{conjecture}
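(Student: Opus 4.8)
The plan is to prove the two implications separately, taking Theorem~\ref{theorem:upper heredity of dist} and Theorem~\ref{theorem:distinguished principal series} as the principal inputs and isolating one genuinely new ingredient for each direction. For the ``if'' part, write $\tau\isomorphic\Delta_1\times\cdots\times\Delta_m$ in the stated form and regroup it as $(\Delta_1\times\Delta_2)\times\cdots\times(\Delta_{2k-1}\times\Delta_{2k})\times\Delta_{2k+1}\times\cdots\times\Delta_m$. Feeding each block $\Delta_{2i-1}\times\Delta_{2i-1}^{\vee}$ and each leftover factor $\Delta_i$ ($i>2k$) into Theorem~\ref{theorem:upper heredity of dist} iteratively — the leftovers being distinguished by hypothesis — the whole ``if'' direction reduces to the single assertion that $\Delta\times\Delta^{\vee}$ is distinguished for every essentially square integrable $\Delta$. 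Since $\Delta$ is the unique irreducible quotient of $\rho|\det|^{(d-1)/2}\times\cdots\times\rho|\det|^{-(d-1)/2}$ for a cuspidal $\rho$ of $\GL_r(F)$, the representation $\Delta\times\Delta^{\vee}$ is a subquotient of a generalized principal series whose inducing data pairs each $\rho|\det|^{j}$ with its own contragredient $\rho^{\vee}|\det|^{-j}$, i.e. it satisfies the evident analogue of $(\star)$ for segments with the pairing parameter maximal. I would prove the assertion via Bernstein's second adjointness,
\begin{align*}
\Hom_{\GL_{2dr}(F)}(\theta\otimes\theta',\Delta^{\vee}\times\Delta)\;\isomorphic\;\Hom_{M(F)}\big(r_{\overline{Q}}(\theta\otimes\theta'),\ \Delta^{\vee}\otimes\Delta\big),
\end{align*}
with $M=\GL_{dr}\times\GL_{dr}$ the Levi of a maximal parabolic $Q$, computing $r_{\overline{Q}}(\theta\otimes\theta')$ from the Jacquet modules of the exceptional representations \cite{KP} by the geometric lemma — these Jacquet modules are built from smaller exceptional representations twisted by the rigid half-sum-of-roots characters — and then locating $\Delta^{\vee}\otimes\Delta$ as a quotient of the resulting module. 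When $\rho$ is a character, $\Delta\times\Delta^{\vee}$ sits inside an honest $(\star)$-principal series, and one can alternatively combine Theorem~\ref{theorem:distinguished principal series} with the analytic-continuation method of Kable (Bernstein's continuation principle applied to the pseudo-integral $\Upsilon$), deforming $\rho$ into regular position where the induced module is irreducible and $\Upsilon\neq0$ and transferring distinction to the subquotient by a pole analysis.

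For the ``only if'' part, let $\tau\isomorphic\Delta_1\times\cdots\times\Delta_m$ be generic and distinguished. The strategy is to refine the derivative argument of Kable (\cite{Kable}, Theorem~6.3). Restricting $\theta\otimes\theta'$ to the mirabolic subgroup and using the Bernstein--Zelevinsky filtration, $(\theta\otimes\theta')|_{P_n}$ is described through the derivatives $\theta^{(j)}$, which are again twists of exceptional representations; this rigidity of the derivatives is the engine of the constraints. Using the genericity of $\tau$, so that $\tau|_{P_n}$ carries the standard Whittaker-type filtration, and unwinding the nonvanishing of $\Hom_{P_n}\big((\theta\otimes\theta')|_{P_n},(\tau^{\vee})|_{P_n}\big)$, one should obtain that $\{\Delta_1,\ldots,\Delta_m\}$ splits, after reordering, into contragredient pairs $\{\Delta_{2i-1},\Delta_{2i-1}^{\vee}\}$ together with leftover segments each of which is itself forced to be distinguished. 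As Kable observes, the derivative bookkeeping detects cuspidal supports only modulo $(F^*)^2$, so it first yields the weaker conclusion (pairs agreeing up to a quadratic twist, leftovers distinguished up to a quadratic twist); to upgrade this to the exact statement I would localize to the associated principal series pieces, invoke the precise form of Theorem~\ref{theorem:distinguished principal series}, and use the one-dimensionality of \eqref{space:hom space} (\cite{Kable}, Theorem~6.4) to pin down the Zelevinsky data uniquely.

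The main obstacle, common to both directions, is that Theorems~\ref{theorem:distinguished principal series} and \ref{theorem:upper heredity of dist} control distinction of $\times$-products and of irreducible full induced modules, whereas the essentially square integrable segments $\Delta$, and a fortiori the pairs $\Delta\times\Delta^{\vee}$, are proper subquotients of principal series; one therefore needs genuine control of how distinction and the dimension of \eqref{space:hom space} behave under specialization of the inducing parameter and along walls of reducibility. For the ``if'' part this is the pole analysis inside the Bernstein continuation, or, in the cuspidal case, the fine structure of the Jacquet modules of $\theta$ and $\theta'$ along non-maximal parabolics; for the ``only if'' part it is the derivative computation pushed beyond what \cite{Kable} requires for one-dimensionality, together with the quadratic-character vanishing that removes the spurious twisted configurations. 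As with Theorems~\ref{theorem:distinguished principal series} and \ref{theorem:Savin's conjecture}, the entire argument would be conditional on the absence of Whittaker models for the exceptional representations when $n>3$.
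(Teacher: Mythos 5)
This statement is Conjecture~\ref{conjecture:combinatorial classification}: the paper does not prove it, and explicitly presents it as open, establishing only the principal series case (Theorem~\ref{theorem:distinguished principal series}) and upper heredity (Theorem~\ref{theorem:upper heredity of dist}). So there is no proof in the paper to compare against, and what you have written is a programme rather than a proof; the question is whether your programme closes the two genuinely missing ingredients, and it does not.

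For the ``if'' direction, your reduction via Theorem~\ref{theorem:upper heredity of dist} to the single assertion that $\Delta\times\Delta^{\vee}$ is distinguished for every essentially square integrable $\Delta$ is the correct skeleton, but your proof of that assertion is only a sketch whose hard step is assumed. Second adjointness is fine formally, but $r_{\overline{Q}}(\theta\otimes\theta')$ is not computed by the geometric lemma: $\theta\otimes\theta'$ is not parabolically induced as a $\GL_n(F)$-module, and its Jacquet modules are accessible only through a gluing as in Lemma~\ref{lemma:composition series of Jacquet of tensor}, whose ``kernel'' constituents are exactly what forces the derivative/mirabolic machinery of Section~\ref{section:geometric}; moreover the pieces involving $\theta(U)\otimes\theta'(U)$ are controlled (via \cite{Kable}, Theorems~5.3--5.4) only one derivative at a time, with the metaplectic tensor obstructing the naive factorization. ``Locating $\Delta^{\vee}\otimes\Delta$ as a quotient of the resulting module'' is therefore the entire open problem, not a step, and your deformation/pole-analysis alternative only covers segments whose cuspidal support consists of characters. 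For the ``only if'' direction the gap is sharper: Theorem~\ref{theorem:distinguished principal series} concerns representations induced from characters of $T_n(F)$, so for a generic $\tau$ whose cuspidal support involves cuspidals of $\GL_r(F)$ with $r>1$ there are no ``associated principal series pieces'' to localize to, and your proposed mechanism for upgrading Kable's mod-$(F^*)^2$ constraint (\cite{Kable} Theorem~6.3) to the exact pairing $\Delta_{2i}=\Delta_{2i-1}^{\vee}$ simply has no input; this is precisely the insufficiency acknowledged in \cite{Kable2} (p.~1602) and the reason the statement is left as a conjecture. The one-dimensionality result (\cite{Kable} Theorem~6.4) is likewise available only on a large set of principal-series parameters, so it cannot by itself pin down Zelevinsky data for general segments.
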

In the case of a principal series representation, the conjecture becomes Theorem~\ref{theorem:distinguished principal series} because for  $n=1$, an irreducible representation $\Delta$ is distinguished if and only if $\Delta^2=1$.

The exceptional representations of $\widetilde{\GL}_n(F)$ were first defined and studied by Kazhdan and Patterson \cite{KP}. Their motivation was global, to study a class of automorphic forms on the metaplectic group. 
One of the significant applications of their theory
was the construction of a Rankin-Selberg integral representation for the symmetric square $L$-function by Bump and Ginzburg \cite{BG}.

Let $\mathrm{k}$ be a number field with a ring of ad\`{e}les $\Adele$. Let $\pi$ be a cuspidal automorphic representation of
$\GL_n(\Adele)$ with a unitary central character. In their seminal work, Bump and Ginzburg \cite{BG} showed that the only possible poles of the partial $L$-function $L_S(s,\pi,\mathrm{Sym}^2)$ are at $s=0,1$ and furthermore, the existence of a
pole at $s=1$ implies the nonvanishing of a period integral of the form
\begin{align}\label{int:BG period integral}
\int_{{Z'\GL_{n}(F)}\bsl{\GL_{n}(\Adele)}}\rho(m)\varphi(m)\varphi'(m)dm.
\end{align}
Here $Z'$ is a subgroup of finite index in the center $Z_n(\Adele)$ of $\GL_n(\Adele)$, $\rho$ is a cusp form in the space of $\pi$ and
$\varphi$ and $\varphi'$ are automorphic forms corresponding to the global exceptional representations $\theta$ and $\theta'$. Takeda \cite{Tk} extended the results of \cite{BG} to the twisted symmetric square $L$-function, but did not consider a period integral.

Now consider an irreducible supercuspidal representation $\tau$ such that the (local) $L$-function $L(s,\tau,\mathrm{Sym}^2)$ has a pole at $s=0$. As an application of the descent method of Ginzburg, Rallis and Soudry 
(\cite{GRS2,GRS5,GRS7,GRS3,GRS8,JSd1,JSd2,Soudry5,Soudry4,RGS}), one can globalize $\tau$ to a cuspidal automorphic representation
$\pi$ of $\GL_n(\Adele)$, such that $L_S(s,\pi,\mathrm{Sym}^2)$ has a pole at $s=1$ (see the appendix of \cite{PR}). Therefore \eqref{int:BG period integral} implies that $\tau$ is distinguished.

This is an example of the analytic approach to the study of distinguished representations. In an ongoing work by Shunsuke Yamana and the author,
we develop a global theory of distinguished representations and extend the results of \cite{BG,Tk}. In the case of even $n$, we present a novel integral representation for the twisted symmetric square $L$-function. We characterize the pole at $s=1$ in terms of a period integral similar to \eqref{int:BG period integral}. Furthermore, we determine the irreducible distinguished summands of the discrete spectrum of $\GL_n$.

The case of $\GL_n$ can be placed in a more general context. The following co-period integral was studied in \cite{me7},
\begin{align*}
\int_{{SO_{2n+1}(\mathrm{k})}\bsl{SO_{2n+1}(\Adele)}}\mathrm{Res}_{s=1/2}\mathrm{E}(g;\rho,s)\Phi(g)\Phi'(g)dg.
\end{align*}
Here $\mathrm{E}(g;\rho,s)$ is an Eisenstein series corresponding to an element $\rho$, in the space of the representation
of $SO_{2n+1}(\Adele)$, parabolically induced from an automorphic cuspidal representation $\pi$ of $\GL_n(\Adele)$, $\Phi$ and $\Phi'$ are automorphic forms in the space of the small representation of $\widetilde{SO}_{2n+1}(\Adele)$ of Bump, Friedberg and Ginzburg \cite{BFG}. The nonvanishing of this integral was related to the nonvanishing of a period similar to \eqref{int:BG period integral}. This global result has a local counterpart, showing that a local representation $I(\tau,s)$ of $SO_{2n+1}(F)$, induced from a Siegel parabolic subgroup and $\tau|\det|^s$, is distinguished at $s=1/2$ whenever $\tau$ is distinguished.

Exceptional representations are related to a broader class of small, or minimal, representations. Perhaps the first example was the Weil representation of $\widetilde{Sp}_{2n}$. These representations played a fundamental role in constructions of lifts and Rankin-Selberg integrals. They are extremely useful for applications, mainly because they enjoy the vanishing of a large class of twisted Jacquet modules, or globally phrased, Fourier coefficients \cite{GRS6,G2,GJS2}. Minimal or small representations have been studied and used by many authors, including \cite{V,KZ,KS,Savin2,BK,Savin,GRS,BFG3,GRS3,KP3,BFG,JSd1,GanSavin,Soudry4,LokeSavin,RGS}.

The term ``distinguished" has been used in the following context. Let $\xi$ be a representation of a group $G$ and let $\eta$ be a character of a subgroup $H<G$. Then $\xi$ is said to be $(H,\eta)$-distinguished if $\Hom_H(\xi,\eta)\ne0$.
There are numerous studies on local and global distinguished representations, including \cite{Jac2,JR,FJ,Offen,OS2,OS,Offen2,Jac3,Matringe3,Matringe2,Matringe,FLO,Matringe5}.

Matringe \cite{Matringe3,Matringe2,Matringe} studied representations of $\GL_n(F_0)$, where $F_0$ is a quadratic extension of $F$, which are $(\GL_n(F),\eta)$-distinguished. He proved (\cite{Matringe2}) that an irreducible generic representation $\xi$ is distinguished, if and only if its Rankin-Selberg Asai $L$-function $L(s,\xi,\mathrm{Asai})$ has an exceptional pole at $0$. Matringe also proved a combinatorial classification result (\cite{Matringe} Theorem~5.2) similar to Conjecture~\ref{conjecture:combinatorial classification}, which he used in \cite{Matringe4} to prove $L(s,\xi,\mathrm{Asai})=L(s,\rho(\xi),\mathrm{Asai})$, where $\rho(\xi)$ is the Langlands parameter associated with $\xi$.

Feigon, Lapid and Offen \cite{FLO} studied representations distinguished by unitary groups, locally and globally.

One tool, used repeatedly for local analysis in the aforementioned works, is Mackey theory, or a variant on the Geometric Lemma of Bernstein and Zelevinsky \cite{BZ2}. For example, Matringe \cite{Matringe} considered the filtration of a representation parabolically induced from
$Q(F_0)$ to $\GL_n(F_0)$, as a $\GL_n(F)$-module. 

In our setting we look at the structure of the $\GL_n(F)$-module $\theta\otimes\theta'$ and exploit the properties of exceptional representations. Our analysis of the space \eqref{space:hom space} is based on ideas of Savin \cite{Savin3} and Kable \cite{Kable}. To study $\theta\otimes\theta'$, we extend the filtration argument in \cite{Savin3} to any $n$. However, Savin used a geometric model for $\theta$ (\cite{FKS}), which is not available in general.
We utilize the computation of derivatives of $\theta$ in \cite{Kable}, to reduce several problems to questions on modules of the mirabolic subgroup. We develop certain extensions to the functors of Bernstein and Zelevinsky \cite{BZ2} (Section~3), which may be of independent interest.

One delicate point about exceptional representations, is that when $n$ is even, the center $Z_n(F)$ of $\GL_n(F)$ is not central in the cover. In turn $\theta$ does not admit a character of $\widetilde{Z}_n(F)$. This will require extra care in our inductive argument (see the proof of Proposition~\ref{proposition:only if direction, for any principal series}). It is interesting to note that when $n=2$, $\widetilde{Z}_n(F)$ acts by a character on the second derivative of $\theta$. This was proved by Gelbart and Piatetski-Shapiro \cite{GP} (Theorem~2.2), and was used by Kable \cite{Kable} (Theorem~5.3) to compute the second derivative of $\theta$.



The relation between equivariant trilinear forms and $\epsilon$-factors has been studied by Prasad \cite{Prasad}.

The rest of this work is organized as follows. Section~\ref{section:preliminaries} contains preliminaries and notation.
In Section~\ref{section:geometric} we prove several technical results, concerning representations of the mirabolic subgroup, that will be used for the computations of Jacquet modules. Our main results on distinguished representations occupy Section~\ref{section:Distinguished representations}.

\section{Preliminaries}\label{section:preliminaries}

\subsection{The groups}\label{subsection:the groups}
Let $F$ be a local \nonarchimedean\ field of characteristic different from $2$. 
Let $(,)$ be the Hilbert symbol of order $2$ of $F$ and put $\mu_2=\{-1,1\}$.
We usually denote by $\psi$ a fixed nontrivial additive character of $F$ and then
$\gamma_{\psi}$ is the normalized Weil factor (\cite{We} Section~14, $\gamma_{\psi}(a)$ is
$\gamma_F(a,\psi)$ in the notation of \cite{Rao}, $\gamma_{\psi}(\cdot)^4=1$).

In the group $\GL_{n}$, fix the Borel subgroup of upper triangular invertible matrices
$B_{n}=T_{n}\ltimes N_{n}$, where $T_{n}$ is the diagonal torus. If $m_1,\ldots,m_l\geq0$ satisfy $m_1+\ldots+m_l=n$,
let $Q_{m_1,\ldots,m_l}=M_{m_1,\ldots,m_l}\ltimes U_{m_1,\ldots,m_l}$ be the standard maximal parabolic subgroup with a Levi part
$M_{m_1,\ldots,m_l}\isomorphic\GL_{m_1}\times\ldots\times\GL_{m_l}$. Let $Z_n$ be the center of $\GL_n$. Denote by $Y_n$
the mirabolic subgroup of $\GL_n$, that is, the subgroup of elements whose last row is $(0,\ldots,0,1)$. Also
denote by $I_{n}$ the identity matrix of $\GL_{n}(F)$.
For a parabolic subgroup $Q<\GL_n$, let $\delta_{Q(F)}$ denote the modulus character of $Q(F)$. For any $l\leq n$, $\GL_l$ is embedded
in $\GL_n$ in the top left corner. For $d\in \NN$ and $H<T_n(F)$, put
$H^d=\{h^d:h\in H\}$. Also put $F^{*d}=(F^*)^d$. If $x,y\in \GL_n(F)$ and $Y<\GL_n(F)$, $\rconj{x}y=xyx^{-1}$ and $\rconj{x}Y=\{\rconj{x}y:y\in Y\}$.

Let $\widetilde{\GL}_n(F)$ be the metaplectic double cover of $\GL_n(F)$, as constructed by
Kazhdan and Patterson \cite{KP} (with their $c$ parameter equal to $0$). Recall that they defined their cover using an
embedding of $\GL_n(F)$ in $SL_{n+1}(F)$, and the cover of $SL_{n+1}(F)$ of Matsumoto \cite{Mats}. We use the block-compatible cocycle of Banks, Levi and Sepanski \cite{BLS}. If $n=2$, this cocycle coincides with the one given by Kubota \cite{Kubota}. Let $p:\widetilde{\GL}_n(F)\rightarrow\GL_n(F)$ be the natural projection.
For any subset $X\subset \GL_n(F)$, denote $\widetilde{X}=p^{-1}(X)$.
Let $\mathe$ be $1$ if $n$ is odd, otherwise $\mathe=2$. Then $\widetilde{Z}_n(F)^{\mathe}$ is the center of $\widetilde{\GL}_n(F)$.

Henceforth we exclude the field $F$ from the notation.

\subsection{Representations}\label{subsection:representations}
Let $G$ be an $l$-group (\cite{BZ1} 1.1). Throughout, representations of $G$ will be complex and smooth. We let
$\Alg{G}$ denote the category of these representations.
If $\pi$ is a representation of $G$, $\pi^{\vee}$ is the representation contragradient to $\pi$.
The central character of $\pi$, if exists, is denoted $\omega_{\pi}$.

Regular induction is denoted $\Ind$ while $\ind$ is the compact induction. When inducing from a parabolic subgroup, induction is always
taken to be normalized.

We say that $\pi$ is glued from representations $\pi_1,\ldots,\pi_l$ if $\pi$ has a filtration whose quotients are, after a permutation, $\pi_1,\ldots,\pi_l$. For convenience, we also write $\pi=\mathrm{s.s.}\bigoplus_{i=1}^l\pi_i$ and refer to both sides as $G$-modules (of course each $\pi_i$ is a $G$-module, but the right-hand side is not a direct sum).
The representations $\pi_i$ might be isomorphic or zero.

If $\pi$ is a representation of a subgroup $H<G$ and $w\in G$, denote by $\rconj{w}\pi$ the representation of $\rconj{w}H$ on the
space of $\pi$ acting by $\rconj{w}\pi(h)=\pi(\rconj{w^{-1}}h)$.

If $\pi$ and $\pi'$ are a pair of genuine representations of $\widetilde{\GL}_n$, their (outer) tensor product
$\pi\otimes \pi'$ can be regarded as a representation of $\GL_n$ by $g\mapsto\pi(\varphi(g))\otimes\pi'(\varphi(g))$, where
$\varphi:\GL_n\rightarrow\widetilde{\GL}_n$ is an arbitrary section. The actual choice of $\varphi$ does not matter, hence
it will usually be omitted.

\subsection{Filtration of induced represenations}\label{subsection:filtration of induced representations}
We recall the increasing
filtration of induced representations of Bernstein and Zelevinsky \cite{BZ1} (2.24).
Let $G$ be an $l$-group, $H<G$ be a closed subgroup and $\pi$ be a representation of $H$ on a space $E$.
Denote the space of the induced representation $\ind_H^G(\pi)$ by $W$. For a compact open subgroup $\mathcal{V}<G$,
let $W^{\mathcal{V}}$ be the subspace of vectors
invariant by $\mathcal{V}$. Choose a set of representatives $\Omega_{\mathcal{V}}$ for $H\bsl G/\mathcal{V}$.
Then $W^{\mathcal{V}}$ is linearly isomorphic with the space of functions $f:\Omega_{\mathcal{V}}\rightarrow E$,
such that $f(g)$ is invariant by $H\cap\rconj{g}\mathcal{V}$ for all $g\in\Omega_{\mathcal{V}}$, and $f$ vanishes outside of
a finite subset of $\Omega_{\mathcal{V}}$. Then if
\begin{align*}
\mathcal{V}_1>\ldots>\mathcal{V}_l>\ldots
\end{align*}
is a decreasing sequence of compact open subgroups,
\begin{align*}
W^{\mathcal{V}_1}\subset\ldots\subset W^{\mathcal{V}_l}\subset\ldots
\end{align*}
is an increasing filtration of $W$. A similar result holds for $\Ind_H^G(\pi)$, except that the functions $f$ may be nonzero
on an infinite number of representatives from $\Omega_{\mathcal{V}}$.

\subsection{Jacquet modules}\label{subsection:Jacquet modules}
Let $\pi$ be a representation of an $l$-group $G$ on a space $E$. If $U$ is a unipotent subgroup, which is exhausted by its compact subgroups (always the case for $U<\GL_n$), let $E(U)\subset E$ be the subspace generated by
the vectors $\pi(u)v-v$ where $u\in U$ and $v\in E$. Put $E_U=E(U)\bsl E$. If $M$ is the normalizer of $U$ in $G$, the
following sequence of $M$-modules
\begin{align*}
0\rightarrow E(U)\rightarrow E\rightarrow E_U\rightarrow0
\end{align*}
is exact. The Jacquet module of $\pi$ with respect to $U$ is $E_U$ and we call $E(U)$ the Jacquet kernel. The normalized Jacquet module
$j_{U}(\pi)$ is defined as in \cite{BZ2} (1.8): it is the representation of $M$ on $E_U$ given by
\begin{align*}
j_{U}(m)(v+E(U))=\topmodule\nolimits_U^{-1/2}(m)(\pi(m)v+E(U)).
\end{align*}
Here $\topmodule_U$ is the topological module of $U$. If $G=\GL_n$ (or its cover) and $U$ is the unipotent radical of a parabolic subgroup $Q$,
$\topmodule_U=\delta_Q$.

We recall from \cite{BZ1} (2.32-2.33) that for any unipotent subgroups $U$ and $V$, $E(U)(V)=E(U)\cap E(V)$, and if $V\triangleleft UV$, $E(U)+E(V)=E(UV)$ and $E_{UV}=(E_{V})_U$.

Let $Q<\GL_n$ be a closed subgroup containing $N_n$ and let $E$ be a $Q$-module.
For $0\leq m\leq n$ and $b\in\{0,1\}$, define the following functor
\begin{align*}
\mathcal{L}^{n,m}_b:\Alg{(Q\cap Q_{n-m,m})}\rightarrow\Alg{(Q\cap Q_{n-m,m})}
\end{align*}
by
\begin{align*}
\mathcal{L}^{n,m}_b(E)=\begin{dcases}
E(U_{n-m,m})&b=0,\\
E_{U_{n-m,m}}&b=1.
\end{dcases}
\end{align*}
The functor $\mathcal{L}^{n,m}_b$ is exact.
Note that $\mathcal{L}^{n,n}_0(E)=0$ and $\mathcal{L}^{n,n}_1(E)=E$, because $U_{n,0}=U_{0,n}=\{I_n\}$.

Now for $b=(b_1,\ldots,b_{l})\in\{0,1\}^{l}$, where $1\leq l\leq n-m+1$,
\begin{align*}
\mathcal{L}^{n,m}_b:\Alg{(Q\cap Q_{n-m-l+1,1^{l-1},m})}\rightarrow\Alg{(Q\cap Q_{n-m-l+1,1^{l-1},m})}
\end{align*}
is given by
\begin{align*}
\mathcal{L}^{n,m}_{b}(E)=\mathcal{L}^{n,m+l-1}_{b_{l}}\ldots\mathcal{L}^{n,m+1}_{b_{2}}\mathcal{L}^{n,m}_{b_{1}}(E).
\end{align*}
The representation $\mathcal{L}^{n,1}_{b}(E)$ is always a $Q\cap B_n$-module. For formal reasons, if $l=0$ (i.e., $b$ is empty), set $\mathcal{L}^{n,m}_b(E)=E$.

The aforementioned properties of Jacquet modules imply the following lemma and corollary.
\begin{lemma}\label{lemma:composition series of Jacquet of tensor}
Let $\pi$ and $\pi'$ be representations of $\widetilde{B}_n$ on the spaces $E$ and $E'$ (resp.).
As $B_n$-modules
\begin{align*}
(E\otimes E')_{N_n}=\mathrm{s.s.}\bigoplus_{b\in\{0,1\}^{n-1}}(\mathcal{L}^{n,1}_b(E)\otimes\mathcal{L}^{n,1}_b(E'))_{N_n}.
\end{align*}
\end{lemma}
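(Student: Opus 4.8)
The plan is to compute $(E\otimes E')_{N_n}$ by iterating, $n-1$ times, a single ``one-step'' decomposition, carried out successively for the unipotent radicals $V_m:=U_{n-m,m}$, $m=1,\ldots,n-1$; each step introduces the binary choice $b_m\in\{0,1\}$, where $b_m=0$ records passing to a Jacquet kernel and $b_m=1$ to a Jacquet module. Since $\mathcal{L}^{n,m}_{b_m}$ is by definition precisely the operation $E\mapsto E(U_{n-m,m})$ or $E\mapsto E_{U_{n-m,m}}$, and $\mathcal{L}^{n,1}_b=\mathcal{L}^{n,n-1}_{b_{n-1}}\cdots\mathcal{L}^{n,1}_{b_1}$, running this iteration $n-1$ times will produce exactly the asserted filtration. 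The only group-theoretic input is that each $V_m$ lies in $N_n$ and is normalized by $B_n$, being the unipotent radical of a standard parabolic, hence $V_m\triangleleft N_n$, and that $E$, $E'$ and every $B_n$-module built from them by the operations above remain $B_n$-modules, so the $V_m$ always act.

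The one-step statement is: for a unipotent subgroup $V\triangleleft N_n$ and any $B_n$-modules $A,A'$,
\begin{align*}
(A\otimes A')_{N_n}=\mathrm{s.s.}\bigl[(A_V\otimes A'_V)_{N_n}\ \oplus\ (A(V)\otimes A'(V))_{N_n}\bigr].
\end{align*}
To prove it, filter the $N_n$-module $A\otimes A'$ by $0\subset A(V)\otimes A'(V)\subset A(V)\otimes A'\subset A(V)\otimes A'+A\otimes A'(V)\subset A\otimes A'$; using the identity $(A(V)\otimes A')\cap(A\otimes A'(V))=A(V)\otimes A'(V)$ of subspaces of $A\otimes A'$, the successive graded pieces are $A(V)\otimes A'(V)$, $A(V)\otimes A'_V$, $A_V\otimes A'(V)$ and $A_V\otimes A'_V$. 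Now apply the exact functor $(\cdot)_{N_n}$. The two mixed pieces die: since $V$ acts trivially on $A'_V$ one has $(A(V)\otimes A'_V)(V)=A(V)(V)\otimes A'_V=A(V)\otimes A'_V$, where $A(V)(V)=A(V)\cap A(V)=A(V)$ by the relation $E(U)(W)=E(U)\cap E(W)$; as $V\subset N_n$ this forces $(A(V)\otimes A'_V)(N_n)=A(V)\otimes A'_V$, i.e.\ $(A(V)\otimes A'_V)_{N_n}=0$, and symmetrically $(A_V\otimes A'(V))_{N_n}=0$. What remains are the two stated terms.

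Finally I would iterate. Apply the one-step statement to $(E\otimes E')_{N_n}$ with $V=V_1$; then to each of the two tensor products it produces with $V=V_2$; and so on through $V_{n-1}$. After $m$ steps each surviving term has the form $(A_b\otimes A'_b)_{N_n}$ with $b\in\{0,1\}^m$, where $A_b=\mathcal{L}^{n,m}_{b_m}\cdots\mathcal{L}^{n,1}_{b_1}(E)$ and $A'_b$ likewise from $E'$; this is a genuine tensor product of $B_n$-modules on which $V_{m+1}\subset N_n$ acts, so the next step applies verbatim. After $n-1$ steps the index set is $\{0,1\}^{n-1}$ and $A_b=\mathcal{L}^{n,1}_b(E)$, $A'_b=\mathcal{L}^{n,1}_b(E')$; composing the $n-1$ filtrations yields the filtration of $(E\otimes E')_{N_n}$ with graded pieces $(\mathcal{L}^{n,1}_b(E)\otimes\mathcal{L}^{n,1}_b(E'))_{N_n}$, as claimed. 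The one point to be careful about---and the reason the argument is phrased through the one-step statement rather than by first computing $(E\otimes E')_{V_1}$ and recursing---is that the $V_1$-Jacquet module of a tensor product is not itself a tensor product; one must retain both the outer functor $(\cdot)_{N_n}$ and an honest tensor $A_b\otimes A'_b$ at every stage, so that the dichotomy can be reapplied. With that bookkeeping, the lemma is an unwinding of the recalled properties of Jacquet functors.
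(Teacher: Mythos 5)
Your proposal is correct and follows essentially the same route as the paper: the same dichotomy (Jacquet kernel versus Jacquet module) for each radical $U_{n-m,m}$, the same vanishing of the mixed terms $(A(V)\otimes A'_V)$ and $(A_V\otimes A'(V))$ after taking coinvariants, and the same iteration over $U_{n-1,1},\ldots,U_{1,n-1}$. The only difference is bookkeeping: the paper takes coinvariants progressively by $U_{n-1,1}$, then $U_{n-1,1}U_{n-2,2}$, and so on (using $E(U)+E(V)=E(UV)$ and $E_{UV}=(E_V)_U$), while you keep the outer functor $(\cdot)_{N_n}$ fixed throughout, which is an equivalent formulation.
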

\begin{proof}[Proof of Lemma~\ref{lemma:composition series of Jacquet of tensor}]
Since
\begin{align*}
(E_{U_{n-1,1}}\otimes E'_{U_{n-1,1}})_{U_{n-1,1}}=E_{U_{n-1,1}}\otimes E'_{U_{n-1,1}}
\end{align*}
and
\begin{align*}
&(E(U_{n-1,1})\otimes E'_{U_{n-1,1}})_{U_{n-1,1}}=E(U_{n-1,1})_{U_{n-1,1}}\otimes E'_{U_{n-1,1}}=0,
\end{align*}
the module $(E\otimes E')_{U_{n-1,1}}$ is glued from
\begin{align*}
(E(U_{n-1,1})\otimes E'(U_{n-1,1}))_{U_{n-1,1}},\qquad E_{U_{n-1,1}}\otimes E'_{U_{n-1,1}}.
\end{align*}
Applying the same argument to both representations and using $U_{n-1,1},U_{n-2,2}\triangleleft U_{n-1,1}U_{n-2,2}$ and \cite{BZ1} (2.32-2.33),
\begin{align*}
(E\otimes E')_{U_{n-1,1}U_{n-2,2}}=\mathrm{s.s.}\bigoplus_{(b_1,b_2)\in\{0,1\}^2}(\mathcal{L}^{n,1}_{(b_1,b_2)}(E)\otimes\mathcal{L}^{n,1}_{(b_1,b_2)}(E'))_{U_{n-1,1}U_{n-2,2}}.
\end{align*}
Proceeding up to $U_{1,n-1}$ yields the result.
\end{proof}
\begin{corollary}\label{corollary:composition series of Jacquet of tensor}
Let $\pi$ and $\pi'$ be representations of $\widetilde{B}_n$ on the spaces $E$ and $E'$ (resp.). Assume
$0\leq m\leq n$, $0\leq l\leq n-m+1$ and $b\in\{0,1\}^l$ are given. Assume $m\geq 1$.
As $B_n$-modules
\begin{align}\label{eq:corollary composition}
(\mathcal{L}^{n,m}_b(E)\otimes\mathcal{L}^{n,m}_b(E'))_{N_n}=\mathrm{s.s.}\bigoplus_{c}(\mathcal{L}^{n,1}_c(E)\otimes\mathcal{L}^{n,1}_c(E'))_{N_n},
\end{align}
where $c=(c_1,\ldots,c_{m-1},b_1,\ldots,b_l,c_m,\ldots,c_{n-l-1})$ varies over $\{0,1\}^{n-l-1}$.

When $m=0$ we have the following special cases:
\begin{itemize}[leftmargin=*]
\item $l=0$: \eqref{eq:corollary composition} holds where $c$ varies over $\{0,1\}^{n-1}$,
\item $l>0$, $b_1=0$: $\mathcal{L}^{n,m}_b(E)=\mathcal{L}^{n,m}_b(E')=0$,
\item $l>0$, $b_1=1$: \eqref{eq:corollary composition} holds and $c=(b_2,\ldots,b_l,c_1,\ldots,c_{n-l})$ varies over $\{0,1\}^{n-l}$.
    \end{itemize}
\end{corollary}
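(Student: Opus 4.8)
The plan is to deduce Corollary~\ref{corollary:composition series of Jacquet of tensor} directly from Lemma~\ref{lemma:composition series of Jacquet of tensor}, by ``completing'' the partial string $b$ to a full string and tracking how the functors $\mathcal{L}^{n,m}_b$ compose. First, recall the key identity $(E_{UV})=(E_V)_U$ when $V\triangleleft UV$ from \cite{BZ1} (2.32--2.33), together with the fact that each $\mathcal{L}^{n,j}_{b_j}$ is exact and takes values in $\Alg(Q\cap Q_{\ast})$ for the appropriate parabolic. The idea is that $\mathcal{L}^{n,m}_b(E)$ for $b\in\{0,1\}^l$ is obtained by successively taking Jacquet modules / Jacquet kernels with respect to the groups $U_{n-m,m}, U_{n-m-1,m+1},\ldots, U_{n-m-l+1,m+l-1}$, which are exactly the ``middle'' block of the chain $U_{n-1,1},\ldots,U_{1,n-1}$ used in the proof of the lemma, namely the steps indexed $m, m+1,\ldots,m+l-1$. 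So computing $(\mathcal{L}^{n,m}_b(E)\otimes\mathcal{L}^{n,m}_b(E'))_{N_n}$ amounts to: first apply the steps $1,\ldots,m-1$ (this is the part of the chain \emph{below} the block where $b$ acts), then the steps dictated by $b$, then the remaining steps $m+l,\ldots,n-1$ (this is the part \emph{above}). Applying the gluing argument of the lemma only to the steps not already fixed by $b$ produces the claimed direct sum over $c=(c_1,\ldots,c_{m-1},b_1,\ldots,b_l,c_m,\ldots,c_{n-l-1})$.

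In more detail, I would run the argument in two stages. Stage one: show that
\begin{align*}
(\mathcal{L}^{n,m}_b(E)\otimes\mathcal{L}^{n,m}_b(E'))_{N_n}=\mathrm{s.s.}\bigoplus_{c'}(\mathcal{L}^{n,1}_{c'}(E)\otimes\mathcal{L}^{n,1}_{c'}(E'))_{N_n},
\end{align*}
where $c'$ ranges over strings of the form $(c_1,\ldots,c_{m-1}, b_1,\ldots,b_l, c_m,\ldots,c_{n-l-1})$; this is done by first applying the binary gluing step ``$E\otimes E'$ on $U_{n-j,j}$ splits into $E(U)\otimes E'(U)$ and $E_U\otimes E'_U$'' for $j=1,\ldots,m-1$ (these are the lower $m-1$ steps, corresponding to the free coordinates $c_1,\ldots,c_{m-1}$), then using $\mathcal{L}^{n,m}_b$ to pass through the $b$-block verbatim (no gluing, the string is prescribed), then applying the gluing step for $j=m+l,\ldots,n-1$ to produce $c_m,\ldots,c_{n-l-1}$. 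Exactness of each $\mathcal{L}^{n,j}_{b_j}$ ensures that applying $\mathcal{L}^{n,m}_b$ to a glued module yields a glued module, so the filtration propagates. Stage two is just bookkeeping: verify that the total number of free coordinates is $(m-1)+(n-1-(m+l-1))=n-l-1$, so $c$ indeed varies over $\{0,1\}^{n-l-1}$, matching \eqref{eq:corollary composition}.

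For the special cases $m=0$: here the first functor in $\mathcal{L}^{n,0}_b$ is $\mathcal{L}^{n,0}_{b_1}$, acting with respect to $U_{n,0}=\{I_n\}$, so $\mathcal{L}^{n,0}_{b_1}(E)=E(U_{n,0})=0$ if $b_1=0$ and $=E_{U_{n,0}}=E$ if $b_1=1$. Thus $b_1=0$ forces $\mathcal{L}^{n,0}_b(E)=0$, giving the middle bullet; and when $b_1=1$ the functor $\mathcal{L}^{n,0}_{b_1}$ is the identity, so effectively $b=(b_2,\ldots,b_l)$ acts as a string of length $l-1$ starting at block $m=1$, and stage one applies with no ``lower'' coordinates (since $m=1$ there), producing $c=(b_2,\ldots,b_l,c_1,\ldots,c_{n-l})$ over $\{0,1\}^{n-l}$. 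The case $l=0$ is the statement $\mathcal{L}^{n,0}_b(E)=E$ by definition, which is literally Lemma~\ref{lemma:composition series of Jacquet of tensor}.

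The main obstacle is purely notational: one must be careful that the functors $\mathcal{L}^{n,m}_b$ are built from \emph{left} composition (the rightmost-applied functor $\mathcal{L}^{n,m}_{b_1}$ is the first step, acting at block index $m$), and that the indices of the unipotent subgroups $U_{n-j,j}$ appearing in the chain of the lemma line up with the block indices $j$ used in defining $\mathcal{L}^{n,j}_{\ast}$. Once one fixes the convention that ``step $j$'' means taking the Jacquet module/kernel along $U_{n-j,j}$, everything follows formally from exactness and the identities $E(U)+E(V)=E(UV)$, $E_{UV}=(E_V)_U$, $E(U)(V)=E(U)\cap E(V)$ of \cite{BZ1} (2.32--2.33), exactly as in the proof of the lemma; there is no genuinely new input.
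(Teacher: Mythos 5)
Your proposal is correct and takes essentially the same route as the paper: both arguments come down to the gluing mechanism of Lemma~\ref{lemma:composition series of Jacquet of tensor} together with the commutation and coincidence/vanishing identities for the functors $\mathcal{L}^{n,j}_x$ (at distinct, resp.\ equal, block indices), all derived from exactness and \cite{BZ1} (2.32--2.33), and the $m=0$ cases are disposed of identically via $\mathcal{L}^{n,0}_0(E)=0$, $\mathcal{L}^{n,0}_1(E)=E$. The only difference is organizational: the paper first applies the lemma in full, summing over all $d\in\{0,1\}^{n-1}$, and then collapses the sum using $\mathcal{L}^{n,1}_d\mathcal{L}^{n,m}_b(E)=\mathcal{L}^{n,1}_d(E)$ if $(d_m,\ldots,d_{m+l-1})=b$ and $=0$ otherwise, whereas you glue only at the free coordinates and interleave the prescribed block $b$ --- an equivalent reorganization of the same bookkeeping.
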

\begin{proof}[Proof of Corollary~\ref{corollary:composition series of Jacquet of tensor}]
Assume $m\geq1$. First apply Lemma~\ref{lemma:composition series of Jacquet of tensor} to deduce
\begin{align*}
(\mathcal{L}^{n,m}_b(E)\otimes\mathcal{L}^{n,m}_b(E'))_{N_n}=\mathrm{s.s.}\bigoplus_{d\in\{0,1\}^{n-1}}
(\mathcal{L}^{n,1}_d\mathcal{L}^{n,m}_b(E)\otimes\mathcal{L}^{n,1}_d\mathcal{L}^{n,m}_b(E'))_{N_n}.
\end{align*}
We claim 
\begin{align}\label{eq:corollary filtration L}
\mathcal{L}^{n,1}_d\mathcal{L}^{n,m}_b(E)=
\begin{dcases}
\mathcal{L}^{n,1}_{d}(E)&(d_m,\ldots,d_{m+l-1})=b,\\
0&\text{otherwise.}
\end{dcases}
\end{align}
To see this we repeatedly apply the following identities, all derived from the definitions and the fact that $U_{n-k,k}\triangleleft N_n$ for all $k$. For $x,y\in\{0,1\}$ and $i\ne j$,
\begin{itemize}[leftmargin=*]
\item $\mathcal{L}^{n,i}_x\mathcal{L}^{n,j}_y=\mathcal{L}^{n,j}_y\mathcal{L}^{n,i}_x$.
\item $\mathcal{L}^{n,i}_x\mathcal{L}^{n,i}_y$ equals $\mathcal{L}^{n,i}_{x}$ if $x=y$, otherwise it vanishes.
\end{itemize}
Equality~\eqref{eq:corollary filtration L} clearly implies the result.

The remaining cases of $m=0$ follow from $\mathcal{L}^{n,0}_0(E)=0$ and $\mathcal{L}^{n,0}_1(E)=E$.
\end{proof}
\subsection{Metaplectic tensor}\label{subsection:The metaplectic tensor}
Irreducible representations of Levi subgroups of classical groups are usually described in terms of the tensor product.
Preimages in $\widetilde{\GL}_n$ of direct factors of Levi subgroups of $\GL_n$, do not commute.
Hence the tensor construction cannot be extended in a straightforward manner. The metaplectic
tensor has been studied by several authors \cite{FK,Su2,Kable,Mezo,Tk2}, in different contexts.

We briefly recall the tensor construction of Kable \cite{Kable}, whose
results will be used throughout. For a Levi subgroup $M<\GL_n$, Let $M^{\square}=\{m\in M:\det{m}\in F^{*2}\}$.
If $\pi$ is a representation of $\widetilde{M}$, denote its restriction to 
$\widetilde{M}^{\square}$ by $\pi^{\square}$.

For $i=1,2$, let $M_i<\GL_{n_i}$ be a standard Levi subgroup, regarded a subgroup of $M=M_1\times M_2<\GL_n$, $n=n_1+n_2$. Consider a pair $\pi_1$ and $\pi_2$ of genuine irreducible admissible representations of
$\widetilde{M}_1$ and $\widetilde{M}_2$.
The subgroups $\widetilde{M}_1^{\square}$ and $\widetilde{M}_2^{\square}$ commute in $\widetilde{\GL}_n$,
hence the usual tensor $\pi_1^{\square}\otimes\pi_2^{\square}$ is defined and may be regarded as a genuine representation of
$p^{-1}(M_1^{\square}\times M_2^{\square})$. In fact
$\widetilde{M}_1^{\square}$ and $\widetilde{M}_2$ also commute, the representation $\pi_1^{\square}\otimes\pi_2$
(and similarly $\pi_1\otimes\pi_2^{\square}$) is defined.

For any character $\omega$ of $\widetilde{Z}_n^{\mathe}$ which coincides with $\omega_{\pi_1}|_{\widetilde{Z}_{n_1}^2}\otimes\omega_{\pi_2}|_{\widetilde{Z}_{n_2}^2}$ on $\widetilde{Z}_n^2$, Kable \cite{Kable} defined the metaplectic tensor
$\pi_1\widetilde{\otimes}_{\omega}\pi_2$ as an irreducible summand of
\begin{align}\label{space:induced space of metaplectic tensor}
\ind_{p^{-1}(M_1^{\square}\times M_2^{\square})}^{\widetilde{M}}(\pi_1^{\square}\otimes\pi_2^{\square}),
\end{align}
on which $\widetilde{Z}_n^{\mathe}$ acts by $\omega$. The summand might not be unique, but all such summands are isomorphic (\cite{Kable} Theorem~3.1).

We mention that the definitions of Kable \cite{Kable} are more general, and include genuine
admissible finite length representations, which admit a central character. In particular for genuine admissible finite length indecomposable representations, the tensor was defined as an indecomposable summand of \eqref{space:induced space of metaplectic tensor}. When starting with irreducible representations, the tensor is irreducible (\cite{Kable} Proposition~3.3).

A more specific description was given in \cite{Kable} (Corollary~3.1): if $n_2$ is even or $n_1$ and $n_2$ are odd, there is an irreducible
summand $\sigma\subset\pi_2^{\square}$ such that
\begin{align}\label{eq:refinement for tensor}
\pi_1\widetilde{\otimes}_{\omega}\pi_2=
\ind_{p^{-1}(M_1\times M_2^{\square})}^{\widetilde{M}}(\pi_1\otimes\sigma).
\end{align}
If $n_2$ is even and $n_1$ is odd, $\sigma$ is uniquely determined by the requirement $\omega=\omega_{\pi_1}\otimes\omega_{\sigma}$ on $\widetilde{Z}_n$; if both $n_2$ and $n_1$ are even, $\sigma$ is arbitrary; otherwise both are odd and
 $\sigma=\pi_2^{\square}$. The definition for the remaining case
of odd $n_2$ and even $n_1$ is similar with the roles of $n_1$ and $n_2$ reversed.

By \cite{Kable} (Theorem~3.1),
\begin{align*}
(\pi_1\widetilde{\otimes}_{\omega}\pi_2)^{\square}=\begin{dcases}
[F^*:F^{*2}]\pi_1^{\square}\otimes\pi_2^{\square}&\text{$n_1$ and $n_2$ are odd,}\\
\pi_1^{\square}\otimes\pi_2^{\square}&\text{otherwise.}
\end{dcases}
\end{align*}
We need a slightly stronger result.
\begin{claim}\label{claim:Mackey theory applied to metaplectic tensor}
The following holds.
\begin{align*}
(\pi_1\widetilde{\otimes}_{\omega}\pi_2)|_{p^{-1}(M_1^{\square}\times M_2)}=\begin{dcases}\pi_1^{\square}\otimes\pi_2
&\text{even $n_2$,}\\
\pi_1^{\square}\otimes\bigoplus_{g\in\widetilde{M}_2^{\square}\bsl\widetilde{M}_2}\chi_g\pi_2
&\text{odd $n_1$ and $n_2$,}\\
\bigoplus_{g\in\widetilde{M}_1^{\square}\bsl\widetilde{M}_1}{\rconj{g}\sigma}\otimes\chi_g\pi_2&\text{even $n_1$, odd $n_2$.}
\end{dcases}
\end{align*}
Here $\chi_g$ is the character of $\widetilde{M}_2^{\square}\bsl\widetilde{M}_2$ given by $\chi_g(x)=(\det{x},\det{g})$ and
$\sigma$ is an irreducible summand of $\pi_1^{\square}$.
\end{claim}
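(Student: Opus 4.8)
The plan is to derive the claim from Kable's description~\eqref{eq:refinement for tensor} together with Mackey theory for the restriction of an induced representation, handling the three cases according to the parities of $n_1$ and $n_2$. Throughout, write $P=p^{-1}(M_1\times M_2^{\square})$ when $n_2$ is even (or both $n_i$ odd, with the appropriate variant $p^{-1}(M_1^\square\times M_2)$), and $H=p^{-1}(M_1^{\square}\times M_2)$ for the subgroup onto which we restrict. The key geometric input is that $M_1^\square\times M_2$ and $M_1\times M_2^\square$ together generate $M_1\times M_2$, and their intersection inside $M_1\times M_2$ is $M_1^\square\times M_2^\square$; moreover the quotients $M_1^\square\bsl M_1$, $M_2^\square\bsl M_2$ and $M^\square\bsl M$ are all naturally identified with $F^*/F^{*2}$ via the determinant. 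This will make the relevant double coset space $H\bsl\widetilde{M}/P$ a single point (or a point, after accounting for the $F^*/F^{*2}$-torsor) in each case.

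First I would treat the case of even $n_2$, which is the cleanest. Here $\pi_1\widetilde\otimes_\omega\pi_2=\ind_{P}^{\widetilde M}(\pi_1\otimes\sigma)$ with $P=p^{-1}(M_1\times M_2^\square)$ and $\sigma\subset\pi_2^\square$ an irreducible summand. Since $\det:M\to F^*$ restricted to $M_1\times\{1\}$ already surjects onto $F^*$ (as $n_1\ge 1$ and $\GL_{n_1}$ has surjective determinant), and since $M_1^\square\times M_2$ maps onto all of $F^*/F^{*2}$ via the $M_2$-factor as well, one checks $H\cdot P=\widetilde M$, i.e.\ there is a single double coset. Mackey's formula then gives $\ind_P^{\widetilde M}(\pi_1\otimes\sigma)|_H=\ind_{H\cap P}^{H}\big((\pi_1\otimes\sigma)|_{H\cap P}\big)$ where $H\cap P=p^{-1}(M_1^\square\times M_2^\square)$. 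Now $(\pi_1\otimes\sigma)|_{p^{-1}(M_1^\square\times M_2^\square)}=\pi_1^\square\otimes\sigma$, and inducing the $M_2^\square$-representation $\sigma$ up to $M_2$ recovers $\pi_2$ because $\sigma$ was an irreducible summand of $\pi_2^\square$ and $\pi_2$ is irreducible with $[M_2:M_2^\square]=2$ for even $n_2$ (the inertia/Clifford-theory dichotomy forces $\ind_{M_2^\square}^{M_2}\sigma=\pi_2$ in this regime — this is essentially the content of Kable's Corollary~3.1 being an \emph{equality} rather than a mere summand). Hence the restriction is $\pi_1^\square\otimes\pi_2$, as claimed. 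The appearance of $\pi_1^\square$ (an irreducible summand of $\pi_1^\square$, possibly $\pi_1^\square$ itself if $n_1$ is also even, or a single summand if $n_1$ is odd) is tracked by following whether $\pi_1|_{\widetilde M_1^\square}$ stays irreducible.

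For the second case, odd $n_1$ and odd $n_2$, I would use~\eqref{eq:refinement for tensor} in the form $\pi_1\widetilde\otimes_\omega\pi_2=\ind_{p^{-1}(M_1\times M_2^\square)}^{\widetilde M}(\pi_1\otimes\pi_2^\square)$ (here $\sigma=\pi_2^\square$). Restricting to $H=p^{-1}(M_1^\square\times M_2)$: again $H\cdot P=\widetilde M$ since the determinant of the $M_1$-factor alone is surjective, so Mackey gives $\ind_{p^{-1}(M_1^\square\times M_2^\square)}^{H}(\pi_1^\square\otimes\pi_2^\square)$. This time $\ind_{\widetilde M_2^\square}^{\widetilde M_2}\pi_2^\square$ is \emph{not} irreducible — for odd $n_2$, $\pi_2^\square=(\pi_2)|_{\widetilde M_2^\square}$ is already irreducible (no splitting), so inducing back produces $\ind_{\widetilde M_2^\square}^{\widetilde M_2}(\pi_2|_{\widetilde M_2^\square})\cong\bigoplus_{g}\chi_g\otimes\pi_2$ over $g\in\widetilde M_2^\square\bsl\widetilde M_2$, the characters $\chi_g(x)=(\det x,\det g)$ being exactly the characters of $\widetilde M_2^\square\bsl\widetilde M_2\cong F^*/F^{*2}$ appearing by Frobenius reciprocity / Clifford theory (this uses that $\pi_2\otimes\chi_g\not\cong\pi_2$ in general and that the sum exhausts the induced representation because the index is $[F^*:F^{*2}]$). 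That yields $\pi_1^\square\otimes\bigoplus_g\chi_g\pi_2$. The third case, even $n_1$ and odd $n_2$, is handled symmetrically: now~\eqref{eq:refinement for tensor} reads (with roles reversed) $\pi_1\widetilde\otimes_\omega\pi_2=\ind_{p^{-1}(M_1^\square\times M_2)}^{\widetilde M}(\sigma\otimes\pi_2)$ for $\sigma\subset\pi_1^\square$, and we restrict to $H=p^{-1}(M_1^\square\times M_2)$ itself — but $H$ is already the inducing subgroup, so no Mackey is needed; one only rewrites $\ind$ trivially, except that we must express the restriction in the stated form $\bigoplus_{g\in\widetilde M_1^\square\bsl\widetilde M_1}\rconj{g}\sigma\otimes\chi_g\pi_2$, which comes from decomposing $\ind_{p^{-1}(M_1^\square\times M_2)}^{\widetilde M}(\sigma\otimes\pi_2)$ restricted back down — actually the cleanest route is to induce in stages through $p^{-1}(M_1\times M_2)$-conjugates, giving the $F^*/F^{*2}$-sum of conjugates $\rconj g\sigma$, paired with the twist $\chi_g$ on the $\pi_2$ side coming from the block-compatible cocycle of \cite{BLS} (the cocycle's commutator of the two blocks is precisely $(\det,\det)$).

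The main obstacle I anticipate is the careful bookkeeping of \emph{which} character twists $\chi_g$ land on the $\pi_2$ factor versus which conjugations $\rconj g\sigma$ land on the $\pi_1$ factor, and in particular verifying that these twists are governed by the Hilbert symbol $(\det x,\det g)$ rather than some other character of $F^*/F^{*2}$. This is where the precise form of the block-compatible cocycle of Banks--Levi--Sepanski enters: when we conjugate an element of $\widetilde M_1^\square$ (or move an element of $\widetilde M_1$ past $\widetilde M_2$), the cocycle produces a Hilbert-symbol factor, and one must check this matches the claimed $\chi_g$. A secondary, more routine, obstacle is confirming $H\cdot P=\widetilde M$ (equivalently that the double coset space is a single point) in the first two cases — this reduces to the elementary fact that $\det(M_i)=F^*$ for each $i$ with $n_i\ge 1$, together with $M^\square=\{m:\det m\in F^{*2}\}$, but one should state it cleanly so the Mackey formula applies without leftover terms. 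Once these two points are pinned down, each case is a short application of Mackey theory and the Clifford-theoretic behaviour of restriction to $\widetilde M_i^\square$ recorded in \cite{Kable}.
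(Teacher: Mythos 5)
Your proposal is correct and follows essentially the same route as the paper: start from Kable's refinement~\eqref{eq:refinement for tensor}, apply Mackey's formula for the restriction of the induced representation to $p^{-1}(M_1^{\square}\times M_2)$ (a single double coset in the first two cases), reduce to $\ind_{\widetilde{M}_2^{\square}}^{\widetilde{M}_2}(\sigma)$, and use the commutator $m_1m_2=(\det m_1,\det m_2)m_2m_1$ to extract the $\chi_g$-twist in the last case. One caution on your treatment of the case of even $n_1$ and odd $n_2$: the phrase ``$H$ is already the inducing subgroup, so no Mackey is needed; one only rewrites $\ind$ trivially'' is misleading --- restricting $\ind_H^{\widetilde{M}}(\sigma\otimes\pi_2)$ back to the normal finite-index subgroup $H$ is precisely an instance of Mackey's theorem and yields the nontrivial direct sum $\bigoplus_{g\in H\bsl\widetilde{M}}\rconj{g}(\sigma\otimes\pi_2)$; you do recover this a sentence later, but the initial phrasing would mislead a reader, and the reference to inducing ``in stages through $p^{-1}(M_1\times M_2)$-conjugates'' (a group equal to $\widetilde{M}$) should be cleaned up to say simply that restriction to the normal inducing subgroup produces the sum of conjugates.
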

\begin{remark}
By \cite{Kable} (Proposition~3.2), this claim implies the result for $(\pi_1\widetilde{\otimes}_{\omega}\pi_2)^{\square}$.
\end{remark}
\begin{proof}[Proof of Claim~\ref{claim:Mackey theory applied to metaplectic tensor}]
The assertions follow from \eqref{eq:refinement for tensor} by Mackey's theory.
For the first two cases, note that the space $(M_1^{\square}\times M_2)\bsl M/(M_1\times M_2^{\square})$ is trivial,
$\ind_{\widetilde{M}_2^{\square}}^{\widetilde{M}_2}(\sigma)=\pi_2$ when $n_2$ is even (\cite{Kable} Proposition~3.2),
and when both $n_1$ and $n_2$ are odd, $\sigma=\pi_2^{\square}$ and $\ind_{\widetilde{M}_2^{\square}}^{\widetilde{M}_2}(\sigma)=\oplus_{g}\chi_g\pi_2$, where the summation is over
$\widetilde{M}_2^{\square}\bsl\widetilde{M}_2$ (\cite{Kable} Proposition~3.1).
For the last case, we have a sum over $g\in\widetilde{M}_1^{\square}\bsl\widetilde{M}_1$ of representations
$\rconj{g}(\sigma\otimes\pi_2)$, where now $\sigma\subset\pi_1^{\square}$. Since $m_1m_2=(\det{m_1},\det{m_2})m_2m_1$ for
$m_i\in\widetilde{M}_i$, $\rconj{g}(\sigma\otimes\pi_2)=\rconj{g}\sigma\otimes\chi_{g}\pi_2$ (see \cite{Kable} p.~748).
\end{proof}

The metaplectic tensor was shown by Kable to satisfy several natural properties. For example, it is associative (\cite{Kable} Proposition~3.5). If $U_i<\GL_{n_i}$ are unipotent subgroups, $j_{U_1U_2}(\pi_1\widetilde{\otimes}_{\omega}\pi_2)=j_{U_1}(\pi_1)\widetilde{\otimes}_{\omega}j_{U_2}(\pi_2)$ (\cite{Kable} Proposition~4.1, here $j_{U_i}(\pi_i)$ might not be indecomposable). Note that in contrast with the usual tensor, it is not true in general that
$(\pi_1\widetilde{\otimes}_{\omega}\pi_2)(U_1)(U_2)=\pi_1(U_1)\widetilde{\otimes}_{\omega}\pi_2(U_2)$. Indeed, the right-hand side might not be defined (e.g., $\pi_1(U_1)$ does not necessarily admit a central character). This point complicated our proof of the ``only if" part of
Theorem~\ref{theorem:distinguished principal series} (see Proposition~\ref{proposition:only if direction, for any principal series} in Section~\ref{section:Distinguished representations}) and led to the development of some of the technical results of Section~\ref{section:geometric}.

\subsection{Exceptional representations}\label{subsection:The exceptional representations}
We describe the exceptional representations introduced and studied by Kazhdan and Patterson \cite{KP}. Recall the construction of principal series representations of $\widetilde{\GL}_n$. Let $\xi$ be a genuine character
of the center $\widetilde{T}_n^2\widetilde{Z}_n^{\mathe}$ of $\widetilde{T}_n$. We extend $\xi$ to a maximal abelian subgroup of $\widetilde{T}_n$, then
induce to a genuine representation $\rho(\xi)$ of $\widetilde{T}_n$, which is irreducible and independent of the particular extension. The corresponding principal series representation is then formed by extending $\rho(\xi)$ trivially on $N_n$, then inducing to $\widetilde{\GL}_n$.

The character $\xi$ is called exceptional if $\xi(I_{i-1},x^2,x^{-2},I_{n-i-1})=|x|$ for all $1\leq i\leq n-1$ and $x\in F^*$. In this
case the representation $\Ind_{\widetilde{B}_n}^{\widetilde{\GL}_n}(\rho(\xi))$ has a unique irreducible quotient $\theta$, called an
exceptional representation. The representation $\theta$ is admissible.

The exceptional characters $\xi$ are parameterized in the following manner. Let $\chi$ be a character of $F^*$. Let $\gamma:F^*\rightarrow\CC^*$ b a mapping such that $\gamma(xy)=\gamma(x)\gamma(y)(x,y)^{\lfloor n/2\rfloor}$ and $\gamma(x^2)=1$ for all $x,y\in F^*$. We call such a mapping a pseudo-character. Define
\begin{align}\label{eq:exceptionl character}
\xi_{\chi,\gamma}(\zeta\mathfrak{s}(zI_n)\mathfrak{s}(t))=\zeta\gamma(z)\chi(z^n\det{t})\delta_{B_n}^{1/4}(t),\qquad \zeta\in\mu_2,t\in T_n^2, z\in F^{*\mathe}.
\end{align}
Here $\mathfrak{s}:\GL_n\rightarrow\widetilde{\GL}_n$ is the section of \cite{BLS} (it is a splitting of $T_n^2$). Of course, when $n$ is even, the choice of $\gamma$ is irrelevant. When $n\equiv1\ (4)$, $\gamma$ is simply a square trivial character of $F^*$. If
$n\equiv3\ (4)$, $\gamma=\gamma_{\psi}$ for some nontrivial additive character $\psi$ of $F$. (The value of the cocycle on $(zI_n,z'I_n)$ is $(-1)^{\lfloor n/2\rfloor}$.)
The corresponding exceptional
representation will be denoted $\theta_{n,\chi,\gamma}$. Since $\chi\theta_{n,1,\gamma}=\theta_{n,\chi,\gamma}$, where on the left-hand side we regard $\chi$ as a character of $\widetilde{\GL}_n$ via $g\mapsto\chi(\det{g})$, we will occasionally set $\chi=1$. The character $\gamma$ will usually be fixed. 

The mapping $\zeta\mathfrak{s}(zI_n)\mapsto\zeta\gamma(z)$ is a genuine character of $\widetilde{Z}_n^{\mathe}$. This is precisely the central character $\omega_{\theta_{n,1,\gamma}}$.

One strong and useful property of exceptional representations, is that the Jacquet functor carries them into exceptional representations of Levi subgroups.
In particular $j_{N_n}(\theta_{n,\chi,\gamma})=\xi_{\chi,\gamma}$, in contrast with the case of general principal series representations, whose Jacquet modules with respect to $N_n$ are of length $n!$.

According to \cite{Kable} (Theorem~5.1),
\begin{align*}
&j_{U_{n_1,n_2}}(\theta_{n_1+n_2,1,\gamma})=\delta_{Q_{n_1,n_2}}^{-1/4}\theta_{n_1,1,\gamma_1}\widetilde{\otimes}_{\gamma}\theta_{n_2,1,\gamma_2},
\end{align*}
where $\gamma_1$ and $\gamma_2$ are arbitrary (nontrivial). Written without the normalization of $j_{U_{n_1,n_2}}$,
\begin{align}\label{eq:result of Kable for Jacquet module of exceptional}
(\theta_{n_1+n_2,1,\gamma})_{U_{n_1,n_2}}=\delta_{Q_{n_1,n_2}}^{1/4}\theta_{n_1,1,\gamma_1}\widetilde{\otimes}_{\gamma}\theta_{n_2,1,\gamma_2}.
\end{align}
Note that in the definition of the metaplectic tensor $\pi_1\widetilde{\otimes}_{\omega}\pi_2$ (see Section~\ref{subsection:The metaplectic tensor}), $\omega$ was a character of $\widetilde{Z}_n^{\mathe}$ which agrees with $\omega_{\pi_1}|_{\widetilde{Z}_{n_1}^2}\otimes\omega_{\pi_2}|_{\widetilde{Z}_{n_2}^2}$ on $\widetilde{Z}_n^2$. The pseudo-character $\gamma$ is regarded here as the character $\zeta\mathfrak{s}(zI_n)\mapsto\zeta\gamma(z)$.

Kazhdan and Patterson \cite{KP} (Section~I.3, see also \cite{BG} p.~145 and \cite{Kable} Theorem~5.4) proved that for $n\geq3$, if $|2|=1$ in $F$, the
exceptional representations do not have Whittaker models. For $n=3$, Flicker, Kazhdan and Savin \cite{FKS} (Lemma~6) used global methods to
extend this result to the case $|2|=1$. It is expected that arguments similar to those of \cite{FKS} will be applicable
for $n>3$ (see \cite{FKS} Lemma~6 and \cite{BG} p.~138). 

\section{Filtrations of representations induced to $Y_n$}\label{section:geometric}
In this section we compute certain filtrations of representations induced to the mirabolic subgroup. 
The results will be utilized in Section~\ref{section:Distinguished representations} for the
proof of Theorem~\ref{theorem:distinguished principal series}. Recall the functors $\Phi^+$ and $\Psi^+$ of
Bernstein and Zelevinsky \cite{BZ2}. We define analogous functors $\Phi_{\diamond}^+$ and $\Psi_{\diamond}^+$, without the normalization. For representations $\pi_0$ of $\GL_{n-2}$ and $\pi$ of $Y_{n-1}$,
\begin{align*}
&\Psi_{\diamond}^+:\Alg{\GL_{n-2}}\rightarrow\Alg{Y_{n-1}},\qquad\Phi_{\diamond}^+:\Alg{Y_{n-1}}\rightarrow\Alg{Y_{n}},\\
&\Psi_{\diamond}^+(\pi_0)=\ind_{\GL_{n-2}U_{n-2,1}}^{Y_{n-1}}(\pi_0),\qquad\Phi_{\diamond}^+(\pi)=\ind_{Y_{n-1}U_{n-1,1}}^{Y_{n}}(\pi\otimes\psi),
\end{align*}
where $\psi$ is a nontrivial additive character of $F$, considered also as a character of $U_{n-1,1}$ by
$\psi(u)=\psi(u_{n-1,n})$. In contrast with \cite{BZ2}, here the induction is not normalized.

For any $H<\GL_n$, denote $H^{\circ}=H\cap Y_n$.

\subsection{$B_n^{\circ}$-filtration of $\Phi_{\diamond}^+\Psi_{\diamond}^+$}\label{subsection:B_ncirc filtrations of phi psi}
The results of this section are stated for $Y_n$, but apply also to $\widetilde{Y}_n$. 
Note that $\Phi_{\diamond}^+\Psi_{\diamond}^+(\pi_0)=\Phi^+\Psi^+(|\det|\pi_0)$ and if $\tau$ is a representation of
$Y_n$ such that $|\det|\pi_0$ is its second derivative, $\Phi^+\Psi^+(|\det|\pi_0)$ is the second quotient appearing in the filtration of
$\tau$, with respect to its derivatives (see \cite{BZ2} 3.5). The results here make no assumption on $\pi_0$ (except being smooth).

A function $f$ in the space of $\Phi_{\diamond}^+(\pi)$ is determined by its restriction to a set of representatives
of $Y_{n-1}U_{n-1,1}\bsl Y_{n}\isomorphic F^{n-1}-\{0\}$. This isomorphism extends to a topological isomorphism, where
$F^{n-1}-\{0\}$ is regarded as an open subset of $F^{n-1}$. For $0\ne x\in F^{n-1}$, set
\begin{align*}
k_x=\min\{i:x_i\ne0\}
\end{align*}
($1\leq k_x\leq n-1$). We choose a set of representatives
$\Omega=\{\ell(x):0\ne x\in F^{n-1}\}$ as in \cite{Flicker}, with
\begin{align*}
\ell(x)=\left(\begin{array}{cccc}I_{k_x-1}&0&0&0\\0&0&I_{n-{k_x}-1}&0\\0&x_{k_x}&(x_{{k_x}+1},\ldots,x_{n-1})&0\\0&0&0&1\end{array}\right).
\end{align*}
There is a compact subset $\Omega_0\subset\Omega$ such that $f|_{\Omega}$ vanishes outside of $\Omega_0$. In particular, the image
of $f|_{\Omega}$ in the space of $\pi$ is a finite set and furthermore, there is a constant $c_f$ such that for any $\ell(x)\in\Omega_0$,
$|x_i|>c_f$ for some $i$.

We use this description to compute Jacquet modules and kernels of $\Phi_{\diamond}^+(\pi)$. In general if $U<Y_n$ is a unipotent subgroup, according to the
Jacquet-Langlands characterization of the kernel of the Jacquet functor (see e.g. \cite{BZ1} 2.33),
$\Phi_{\diamond}^+(\pi)(U)$ is the space of functions $f\in\Phi_{\diamond}^+(\pi)$, for which there is a compact subgroup $\mathcal{N}<U$, such that
\begin{align}\label{eq:Jacquet kernel as integral}
\int_{\mathcal{N}}f(\ell(x)v)\ dv=0,\qquad\forall 0\ne x\in F^{n-1}.
\end{align}

Also note that if $f\in\Phi_{\diamond}^+(\pi)$, for any $u\in U_{n-1,1}$,
\begin{align}\label{eq:action of u in geometric general n}
&u\cdot f(\ell(x))=f(\ell(x)u)=\psi(xu)f(\ell(x)).
\end{align}
Here and onward, if $x\in F^{l}$ and $u\in U_{l,1}$, when we write $\psi(xu)$ we refer to $u$ as a column in $F^l$. For example
$U_{n-1,1}=\{\left(\begin{smallmatrix}I_{n-1}&u\\&1\end{smallmatrix}\right)\}\isomorphic F^{n-1}$.

For any open $\Omega_0\subset\Omega$, denote by $\Phi_{\diamond}^{+;\Omega_0}(\pi)$ the subspace of
$\Phi_{\diamond}^+(\pi)$ consisting of functions $f$, such that the support of $f|_{\Omega}$ is contained in
$\Omega_0$. Let $\Omega(j)=\{\ell(x)\in\Omega:k_x\leq j\}$.
Then $\Phi_{\diamond}^{+;\Omega(n-1)}(\pi)=\Phi_{\diamond}^+(\pi)$. For each
$j$, $\Phi_{\diamond}^{+;\Omega(j)}(\pi)$ is a $Q_{j,1^{n-j}}^{\circ}$-module. To see this note that if $y\in Q_{j,1^{n-j}}^{\circ}$ and
$k_x>j$, $\ell(x)y=y'\ell(x')$, where $y'\in Q_{j,1^{n-j}}^{\circ}\cap Y_{n-1}U_{n-1,1}$ and $x'\in F^{n-1}$ satisfies $k_{x'}=k_x$. This follows from the computation
\begin{align*}
\left(\begin{array}{cc}0&I_l\\x&y\end{array}\right)
\left(\begin{array}{cc}a&b\\0&d\end{array}\right)
=\left(\begin{array}{cc}d&0\\0&1\end{array}\right)
\left(\begin{array}{cc}0&I_l\\ax&xb+yd\end{array}\right),\qquad x,a\in F^*.
\end{align*}
In particular we have the following filtration of $B_n^{\circ}$-modules,
\begin{align*}
0\subset\Phi_{\diamond}^{+;\Omega(1)}(\pi)\subset\ldots\subset\Phi_{\diamond}^{+;\Omega(n-1)}(\pi)=\Phi_{\diamond}^+(\pi).
\end{align*}
For formal reasons, put $\Phi_{\diamond}^{+;\Omega(0)}(\pi)=0$.
\begin{example}
If $n=4$,
\begin{align*}
&\Omega(1)=F^*\times F\times F,\\
&\Omega(2)=F^*\times F\times F\ \bigcup\ F\times F^*\times F,\\
&\Omega(3)=F^3-\{0\}.
\end{align*}
\end{example}
We will need certain generalizations of $\Phi_{\diamond}^+$. Let $2\leq i\leq j\leq n$. We define functors
\begin{align*}
\mathcal{E}_{j,i},\mathcal{E}_{j,i}^-:\Alg{Q_{n-j,1^{j-i},i-2}}\rightarrow\Alg{Q_{n-j,1^{j}}^{\circ}}.
\end{align*}
The functor $\mathcal{E}_{j,i}$ will be used to describe the quotients of the aforementioned filtration, see Claim~\ref{claim:relating phi + with exact j to mathcal E} below.
For a subgroup $X<\GL_i$ put
\begin{align*}
&E_{j,i}(X)=\left\{\left(\begin{array}{ccc}g&u_1&u_2\\&b&u_3\\&&x\end{array}\right):g\in\GL_{n-j},b\in B_{j-i},x\in X\right\}.
\end{align*}
Let $\pi_0$ be a representation of the parabolic subgroup $Q_{n-j,1^{j-i},i-2}<\GL_{n-2}$. Regarding $\GL_{n-2}$ as a subgroup
of $Y_{n-1}$, we can extend $\pi_0$ trivially on $U_{n-2,1}$ and
form a representation $\pi_0\otimes\psi$ of $E_{j,i}(Y_{i-1}U_{i-1,1})=Q_{n-j,1^{j-i},i-2}\ltimes U_{n-2,1,1}$, where $\psi$ is regarded as a character
of $U_{n-1,1}$, as above. Now consider the induced space
\begin{align*}
\ind_{E_{j,i}(Y_{i-1}U_{i-1,1})}^{E_{j,i}(Y_i)}(\pi_0\otimes\psi).
\end{align*}
This is in particular a $Q_{n-j,1^{j}}^{\circ}$-module.
Functions in this space are determined by their restriction to $Y_{i-1}U_{i-1,1}\bsl Y_{i}$. Choose a set
of representatives as above, $\Omega^{(i)}=\{\ell(x):0\ne x\in F^{i-1}\}$, then
$\Omega^{(i)}(l)=\{\ell(x)\in\Omega^{(i)}:k_x\leq l\}$. Set
\begin{align*}
\mathcal{E}_{j,i}(\pi_0)=\{f\in\ind_{E_{j,i}(Y_{i-1}U_{i-1,1})}^{E_{j,i}(Y_i)}(\pi_0\otimes\psi):\text{$f|_{\Omega^{(i)}}$ vanishes outside $\Omega^{(i)}(1)$}\}.
\end{align*}
This is still a $Q_{n-j,1^{j}}^{\circ}$-module, because for a representation $\pi_i$ of $Y_{i-1}$,
$\Phi_{\diamond}^{+;\Omega(1)}(\pi_i)$ is a $B_{i}^{\circ}$-module. Note that if $i\leq n-1$, then $Q_{0,1^{n-i},i-2}=Q_{1,1^{n-1-i},i-2}$ and
\begin{align}\label{eq:increasing by 1 when j = n-1}
\mathcal{E}_{n-1,i}(\pi_0)=\mathcal{E}_{n,i}(\pi_0).
\end{align}

By the definition, Equality~\eqref{eq:action of u in geometric general n} applies also to functions in $\mathcal{E}_{j,i}(\pi_0)$.

Further denote by $\mathcal{E}_{j,i}^-(\pi_0)$ the representation obtained by the above construction, with $\psi$ replaced by
the trivial character. The motivation for defining $\mathcal{E}_{j,i}^-$ is that by \eqref{eq:action of u in geometric general n},
\begin{align*}
(\mathcal{E}_{j,i}(\pi)\otimes\mathcal{E}_{j,i}(\pi'))_{U_{n-1,1}}=\mathcal{E}_{j,i}^-(\pi\otimes\pi').
\end{align*}
Clearly $\mathcal{E}_{j,i}$ and $\mathcal{E}_{j,i}^-$ are exact. 
The following claims will be applied repeatedly below.
\begin{claim}\label{claim:computations of Jacquet kernel and module for mathcal E}
For $0\leq m\leq\min(1,n-j)$ and $b\in\{0,1\}$, as functors
\begin{align*}
\Alg{Q_{n-j,1^{j-i},i-2}}\rightarrow\Alg{Q_{n-j-m,1^{j+m-i}}^{\circ}},
\end{align*}
\begin{align*}
&\mathcal{L}^{n,j+m}_b\mathcal{E}_{j,i}=\mathcal{E}_{j+m,i}
\mathcal{L}^{n-2,j+m-2}_b.
\end{align*}
\end{claim}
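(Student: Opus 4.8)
The plan is to establish both equalities ($b=0$, the Jacquet kernel, and $b=1$, the Jacquet module) from one geometric identity, deducing $b=1$ from $b=0$ via the exact sequence $0\to E(U)\to E\to E_U\to 0$. Fix $m\in\{0,1\}$ with $m\le n-j$, and write $V=U_{n-j-m,j+m}\le\GL_n$ and $V'=U_{n-j-m,j+m-2}\le\GL_{n-2}$, so that $\mathcal{L}^{n,j+m}_b$ is the Jacquet kernel ($b=0$) or module ($b=1$) with respect to $V$, and $\mathcal{L}^{n-2,j+m-2}_b$ the corresponding functor for $V'$. The central assertion is the geometric identity
\begin{align*}
\mathcal{E}_{j,i}(\pi_0)(V)=\{f\in\mathcal{E}_{j,i}(\pi_0):f(\ell(x))\in\pi_0(V')\text{ for all }\ell(x)\in\Omega^{(i)}(1)\},
\end{align*}
after which the rest is bookkeeping.

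To prove this I would first record four facts about the position of $V$, valid because $j\ge i\ge 2$ and $m\le 1$: (a) $V\subseteq E_{j,i}(Y_{i-1}U_{i-1,1})$, with its $\GL_{n-j}$-component lying in $U_{n-j-m,m}\le\GL_{n-j}$ and its components in the $B_{j-i}$- and $Y_i$-blocks and in $u_3$ trivial; (b) the character $\psi$ is trivial on $V$, since the $(n-1,n)$-entry is never in the support of $V$; (c) every representative $\ell(x)\in\Omega^{(i)}(1)$, acting only on the last $i$ coordinates, normalizes $V$, because $V$ is supported in rows $\le n-j-m\le n-i$; and (d) for such $x$ the map $\lambda_x$ sending $v$ to the top-left $(n-2)\times(n-2)$ block of $\ell(x)v\ell(x)^{-1}$ is a continuous surjective homomorphism of the abelian group $V$ onto $V'$, and $(\pi_0\otimes\psi)(\ell(x)v\ell(x)^{-1})=\pi_0(\lambda_x(v))$ --- the last point because by (b) the $\psi$-part vanishes and the part of the conjugate supported in columns $n-1,n$ is killed by the trivial extension of $\pi_0$ (one writes $\ell(x)v\ell(x)^{-1}=\lambda_x(v)\cdot u$ with $u$ supported in columns $n-1,n$; surjectivity of $\lambda_x$ uses the invertibility of $\ell(x)$, and $\lambda_x$ depends on $x$ only through $\ell(x)^{-1}$). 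The geometric identity is then read off from the Jacquet--Langlands criterion \eqref{eq:Jacquet kernel as integral}: since $(v\cdot f)(\ell(x))=\pi_0(\lambda_x(v))f(\ell(x))$, pushing $\int_{\mathcal N}(v\cdot f)(\ell(x))\,dv$ forward along $\lambda_x$ and using that $\lambda_x(\mathcal N)$ is a compact open subgroup of $V'$ shows its vanishing for all $x$ is equivalent to $f(\ell(x))\in E(V')=\pi_0(V')$ for all $x$. The one delicate point in the converse direction is that a \emph{single} compact open $\mathcal N\subseteq V$ must serve for all $x$ in the support of $f$; this uses that $f|_{\Omega^{(i)}}$ has compact support (so $f$ takes finitely many values and $|x_1|$ is bounded away from $0$, making $\{\lambda_x\}$ uniformly co-bounded) together with $\pi_0(V')=\bigcup_W E(W)$ over compact open $W\le V'$.

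Finally I would identify the right-hand side of the geometric identity with $\mathcal{E}_{j+m,i}\mathcal{L}^{n-2,j+m-2}_b(\pi_0)$. When $m=0$ we have $\mathcal{E}_{j+m,i}=\mathcal{E}_{j,i}$, and $V'=U_{n-j,j-2}$ is normalized by $Q_{n-j,1^{j-i},i-2}$, so $\pi_0(V')=\mathcal{L}^{n-2,j-2}_0(\pi_0)$ is a $Q_{n-j,1^{j-i},i-2}$-submodule of $\pi_0$; by exactness of $\mathcal{E}_{j,i}$ its image in $\mathcal{E}_{j,i}(\pi_0)$ is exactly the space of functions valued in $\pi_0(V')$, which by the geometric identity is $\mathcal{E}_{j,i}(\pi_0)(V)$; this is the case $b=0$, and $b=1$ follows by applying exactness of $\mathcal{E}_{j,i}$ to $0\to\pi_0(V')\to\pi_0\to(\pi_0)_{V'}\to 0$ together with $E_V=E/E(V)$. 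When $m=1$, $\pi_0(V')$ is only a $Q_{n-j-1,1^{(j+1)-i},i-2}$-submodule of $\pi_0$, and one must pass from $\mathcal{E}_{j,i}$ to $\mathcal{E}_{j+1,i}$: here $E_{j+1,i}(Y_i)$ is exactly the set of elements of $E_{j,i}(Y_i)$ whose $\GL_{n-j}$-component lies in $Q_{n-j-1,1}$, the inducing subgroups are related the same way, $E_{j,i}(Y_{i-1}U_{i-1,1})\cdot E_{j+1,i}(Y_i)=E_{j,i}(Y_i)$, and $Q_{n-j-1,1^{(j+1)-i},i-2}=N_{\GL_{n-2}}(V')\cap Q_{n-j,1^{j-i},i-2}$; hence the coset spaces and the representatives $\ell(x)$ match, and restriction $f\mapsto f|_{E_{j+1,i}(Y_i)}$ is a $Q_{n-j-1,1^{j+1}}^\circ$-equivariant isomorphism from $\{f:f(\ell(x))\in\pi_0(V')\}$ onto $\mathcal{E}_{j+1,i}(\pi_0(V'))$ (surjectivity by extending a function through the $E_{j,i}(Y_{i-1}U_{i-1,1})$-transformation law, automatically smooth since it vanishes near $x_1=0$ and is locally constant inside), giving $b=0$; for $b=1$ one composes restriction with $\pi_0\to(\pi_0)_{V'}$ to get a $Q_{n-j-1,1^{j+1}}^\circ$-equivariant surjection $\mathcal{E}_{j,i}(\pi_0)\to\mathcal{E}_{j+1,i}((\pi_0)_{V'})$ whose kernel is $\mathcal{E}_{j,i}(\pi_0)(V)$ by the $b=0$ case, whence its target is $\mathcal{E}_{j,i}(\pi_0)_V=\mathcal{L}^{n,j+1}_1\mathcal{E}_{j,i}(\pi_0)$.

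I expect the main obstacle to be precisely this $m=1$ bookkeeping --- verifying, by explicit block-matrix computation, that the two sets of induction data correspond both at the level of the ambient groups $E_{j,i}(Y_i)\supseteq E_{j+1,i}(Y_i)$ and at the level of the coefficient representations via $N_{\GL_{n-2}}(V')\cap Q_{n-j,1^{j-i},i-2}=Q_{n-j-1,1^{(j+1)-i},i-2}$ --- together with the uniformity-in-$x$ estimate in the converse half of the geometric identity; everything else, including facts (a)--(d), is a routine direct check.
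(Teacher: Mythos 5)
Your proposal is correct and follows essentially the same route as the paper's proof: establish the geometric characterization of $\mathcal{E}_{j,i}(\pi_0)(U_{n-j-m,j+m})$ as the space of functions whose values on $\Omega^{(i)}(1)$ land in $\pi_0(U_{n-j-m,j+m-2})$, using that $\ell(x)$ normalizes $U_{n-j-m,j+m}$ and that $\psi$ dies on this group, then identify that space with $\mathcal{E}_{j+m,i}(\pi_0(U_{n-j-m,j+m-2}))$ via restriction to $E_{j+m,i}(Y_i)$, and finally derive $b=1$ from $b=0$ by exactness. The main additions on your side are the explicit bookkeeping facts (a)--(d) and the uniformity-in-$x$ remark in the converse half of the geometric identity, both of which the paper handles more tersely (the latter by simply invoking compact support of $f|_{\Omega^{(i)}}$, the surjectivity of restriction by a short appeal to the Bernstein--Zelevinsky filtration rather than your "extend through the transformation law" phrasing), so no substantive difference.
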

\begin{proof}[Proof of Claim~\ref{claim:computations of Jacquet kernel and module for mathcal E}]
Assume $b=0$. We need to prove that for a representation $\pi_0$,
\begin{align}\label{eq:mathcal E commutes with Jacquet kernel}
\mathcal{E}_{j,i}(\pi_0)(U_{n-j-m,j+m})=\mathcal{E}_{j+m,i}(
\pi_0(U_{n-j-m,j+m-2})).
\end{align}

First we show that $\mathcal{E}_{j,i}(\pi_0)(U_{n-j-m,j+m})$ consists of the functions $f
\in\mathcal{E}_{j,i}(\pi_0)$ such that
$f|_{\Omega^{(i)}}$ is contained in $\pi_0(U_{n-j-m,j+m-2})$.

Indeed, let $f\in \mathcal{E}_{j,i}(\pi_0)(U_{n-j-m,j+m})$. Then by \cite{BZ1} (2.33), there is a compact subgroup $\mathcal{N}<U_{n-j-m,j+m}$
for which \eqref{eq:Jacquet kernel as integral} (with $i$ instead of $n$) holds. 
As a subgroup of $E_{j,i}(Y_i)$, $Y_i$ normalizes $U_{n-l,l}$ for any $i \leq l\leq n$. Moreover,
the last two columns of $U_{n-j-m,j+m}$ act trivially on the left, because $\psi$ is trivial on
$U_{n-j-m,j+m}\cap U_{n-1,1}$ whenever $j\geq2$. Thus for all $0\ne x\in F^{i-1}$,
\begin{align*}
&0=\int_{\mathcal{N}}f(\ell(x)v)\ dv=\int_{\rconj{\ell(x)}\mathcal{N}\cap U_{n-j-m,j+m-2}}\pi_0(v)f(\ell(x))\ dv.
\end{align*}
It follows that the image of $f|_{\Omega^{(i)}}$ is contained in $\pi_0(U_{n-j-m,j+m-2})$.

Conversely, because $f|_{\Omega^{(i)}}$ is
compactly supported, one may choose a large enough compact subgroup $\mathcal{N}<U_{n-j-m,j+m}$ such that \eqref{eq:Jacquet kernel as integral} holds and hence $f\in\mathcal{E}_{j,i}(\pi_0)(U_{n-j-m,j+m})$.

It follows that restriction of $f$ to a function on $E_{j+m,i}(Y_i)$ defines an injection into the right-hand side of
\eqref{eq:mathcal E commutes with Jacquet kernel}. It is also a bijection, as we now explain (this is clear
if $m=0$).

We use the increasing filtration of $\mathcal{E}_{j+m,i}(\pi_0(U_{n-j-m,j+m-2}))$ (\cite{BZ1} 2.24, see Section~\ref{subsection:filtration of induced representations}).
Let $f_1\in\mathcal{E}_{j+m,i}(\pi_0(U_{n-j-m,j+m-2}))$. Taking a small enough compact open subgroup $\mathcal{V}_1<E_{j+m,i}(Y_i)$,
we can regard $f_1|_{\Omega^{(i)}}$ as a locally constant function on $F^{i-1}-\{0\}$ such that
\begin{align*}
f_1(\ell(x))\in\pi_0(U_{n-j-m,j+m-2})^{E_{j+m,i}(Y_{i-1}U_{i-1,1})\ \cap\ \rconj{\ell(x)}\mathcal{V}_1},\qquad\forall 0\ne x\in F^{i-1}.
\end{align*}
Select a compact open subgroup $\mathcal{V}<E_{j,i}(Y_i)$ such that each vector $f_1(\ell(x))$ is fixed by
$E_{j,i}(Y_{i-1}U_{i-1,1})\cap\rconj{\ell(x)}\mathcal{V}$. Since
\begin{align*}
E_{j,i}(Y_{i-1}U_{i-1,1})\bsl E_{j,i}(Y_i)\isomorphic E_{j+m,i}(Y_{i-1}U_{i-1,1})\bsl E_{j+m,i}(Y_i),
\end{align*}
we can use $\mathcal{V}$ to define $f\in\mathcal{E}_{j,i}(\pi_0)$ with $f|_{\Omega^{(i)}}=f_1|_{\Omega^{(i)}}$. It
follows that $f$ belongs to $\mathcal{E}_{j,i}(\pi_0)(U_{n-j-m,j+m})$ and is the preimage of $f_1$.

Regarding the case of $b=1$, according to the arguments above, if
$\alpha:\pi_0\rightarrow(\pi_0)_{U_{n-j-m,j+m-2}}$ is the natural projection,
the mapping $\mathcal{E}_{j,i}(\pi_0)\rightarrow\mathcal{E}_{j+m,i}((\pi_0)_{U_{n-j-m,j+m-2}})$ given
by $f\rightarrow \alpha f|_{E_{j+m,i}(Y_i)}$ is onto and its kernel is precisely
$\mathcal{E}_{j,i}(\pi_0)(U_{n-j-m,j+m})$.
\end{proof}
\begin{claim}\label{claim:relating phi + with exact j to mathcal E}
For $1\leq j\leq n-1$, as $Q_{j-1,1^{n-j+1}}^{\circ}$-modules
\begin{align*}
\frac{\Phi_{\diamond}^{+;\Omega(j)}\Psi_{\diamond}^+(\pi_0)}
{\Phi_{\diamond}^{+;\Omega(j-1)}\Psi_{\diamond}^+(\pi_0)}
=\mathcal{E}_{n-j+1,n-j+1}(\pi_0).
\end{align*}
\end{claim}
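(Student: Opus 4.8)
The plan is to unwind the definitions of $\Phi_{\diamond}^+$, $\Psi_{\diamond}^+$ and the filtration $\Phi_{\diamond}^{+;\Omega(j)}$, and identify the successive quotient with an induced space of the type defining $\mathcal{E}_{j',i'}$. First I would record that $\Psi_{\diamond}^+(\pi_0)=\ind_{\GL_{n-2}U_{n-2,1}}^{Y_{n-1}}(\pi_0)$, so $\Phi_{\diamond}^+\Psi_{\diamond}^+(\pi_0)$ is a space of functions on $Y_n$, and by transitivity of $\ind$ a function in it is determined by its restriction to the representatives $\ell(x)$, $0\ne x\in F^{n-1}$, with values in the space of $\Psi_{\diamond}^+(\pi_0)$; the subspace $\Phi_{\diamond}^{+;\Omega(j)}\Psi_{\diamond}^+(\pi_0)$ consists of those $f$ whose restriction to $\Omega$ is supported on $\{\ell(x):k_x\le j\}$. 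The quotient by $\Phi_{\diamond}^{+;\Omega(j-1)}\Psi_{\diamond}^+(\pi_0)$ is therefore the space of functions supported on the locally closed piece $\{\ell(x):k_x=j\}$, modulo nothing; i.e.\ it is the space of functions on the orbit $\{k_x=j\}$ obtained by restriction, and by the discussion in Section~\ref{subsection:filtration of induced representations} this is again an induced representation — induction from the stabilizer of a chosen representative $\ell(x^0)$ with $k_{x^0}=j$ (e.g.\ $x^0=e_j$) up to $Q_{j-1,1^{n-j+1}}^{\circ}$.

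Next I would compute that stabilizer and the representation on it explicitly. With $x^0=e_j$, the point $\ell(e_j)$ has the shape displayed in the definition of $\Omega$ with $k_x=j$; conjugating $Y_{n-1}U_{n-1,1}$ (the subgroup we induced from in $\Phi_{\diamond}^+$) by $\ell(e_j)^{-1}$ and intersecting with $Q_{j-1,1^{n-j+1}}^{\circ}$, one gets exactly the group $E_{n-j+1,n-j+1}(Y_{n-j}U_{n-j,1})=Q_{j-1,1^{1}\cdots,n-j-1}\ltimes U_{\cdots}$ appearing in the construction of $\mathcal{E}_{n-j+1,n-j+1}$ (note that with $i=j'=n-j+1$ we are in the ``top'' case, and (\ref{eq:increasing by 1 when j = n-1}) handles the comparison of the $Q_{0,\cdots}$ and $Q_{1,\cdots}$ labels when $j=1$). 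The inner representation $\pi_0\otimes\psi$ also matches: the $\psi$ twist on the last column comes from (\ref{eq:action of u in geometric general n}) applied inside $\Phi_{\diamond}^+$, i.e.\ from $u\cdot f(\ell(x))=\psi(xu)f(\ell(x))$ evaluated at $x=e_j$, which pins down precisely the character $\psi(u_{n-1,n})$ on the relevant one-parameter subgroup; and the $\pi_0$ part is simply transported from $\Psi_{\diamond}^+$. The additional support condition "$f|_{\Omega^{(i)}}$ vanishes outside $\Omega^{(i)}(1)$" in the definition of $\mathcal{E}$ corresponds to the fact that once $k_x=j$ is fixed, the remaining $\Psi_{\diamond}^+$-direction carries the $\Phi_{\diamond}^{+;\Omega(1)}$ truncation — here one uses the remark in the text that $\Phi_{\diamond}^{+;\Omega(1)}(\pi_i)$ is a $B_i^{\circ}$-module, which is exactly why $\mathcal{E}_{j,i}(\pi_0)$ remains a $Q_{n-j,1^j}^{\circ}$-module.

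The main obstacle I anticipate is bookkeeping: matching the block structure of the stabilizer of $\ell(e_j)$ inside $\GL_n$ against the group $E_{n-j+1,n-j+1}(Y_{n-j}U_{n-j,1})<Q_{j-1,1^{n-j+1}}^{\circ}$, and checking that the section/cocycle data behave correctly so that the statement also holds verbatim for $\widetilde{Y}_n$ (as claimed at the start of Section~\ref{subsection:B_ncirc filtrations of phi psi}), since $\mathfrak{s}$ splits $T_n^2$ but the relevant unipotents must be handled via their canonical splittings. Once the representative $\ell(e_j)$ is fixed and conjugation is carried out once and for all, the identification of the inner data ($\pi_0$ on the $\GL_{n-2}$-block, $\psi$ on the last superdiagonal entry, trivial on the intermediate $U$'s) is forced, and the isomorphism of coset spaces $E_{n-j+1,n-j+1}(Y_{n-j}U_{n-j,1})\bsl Q_{j-1,1^{n-j+1}}^{\circ}\isomorphic \{k_x=j\}\subset Y_{n-1}U_{n-1,1}\bsl Y_n$ together with the filtration description of induced representations in Section~\ref{subsection:filtration of induced representations} gives the claimed equality of $Q_{j-1,1^{n-j+1}}^{\circ}$-modules. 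I would end by noting the degenerate case $j=n-1$, where $\Phi_{\diamond}^{+;\Omega(n-1)}=\Phi_{\diamond}^+$ and the quotient is the open-orbit piece, matching $\mathcal{E}_{2,2}(\pi_0)=\Phi_{\diamond}^{+;\Omega(1)}\Psi_{\diamond}^+$ directly from the definitions.
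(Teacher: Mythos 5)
Your plan captures the right geometric picture — the quotient is supported on the locally closed stratum $\{k_x=j\}$ — but the way you propose to formalize it does not survive scrutiny, and it diverges from what the paper actually does.

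The concrete problem is your stabilizer computation. You assert that $\rconj{\ell(e_j)^{-1}}(Y_{n-1}U_{n-1,1})\cap Q_{j-1,1^{n-j+1}}^{\circ}$ equals $E_{n-j+1,n-j+1}(Y_{n-j}U_{n-j,1})$. That cannot be right as stated: with $l=n-j+1$, the group $E_{l,l}(Y_{l-1}U_{l-1,1})=Q_{n-l,l-2}\ltimes U_{n-2,1,1}$ contains a full $\GL_{l-2}=\GL_{n-j-1}$ block (from the mirabolic $Y_{l-1}$), whereas $Q_{j-1,1^{n-j+1}}^{\circ}$ only contains its Borel there — so $E_{l,l}(Y_{l-1}U_{l-1,1})$ is not a subgroup of $Q_{j-1,1^{n-j+1}}^{\circ}$, and an intersection with the latter cannot produce it. More structurally, your framing treats both the quotient and $\mathcal{E}_{l,l}(\pi_0)$ as compact inductions from a point-stabilizer up to $Q_{j-1,1^{n-j+1}}^{\circ}$; but $\mathcal{E}_{l,l}(\pi_0)$ is not defined that way. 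It is a subspace of $\ind_{E_{l,l}(Y_{l-1}U_{l-1,1})}^{E_{l,l}(Y_l)}(\pi_0\otimes\psi)$, i.e.\ functions carrying equivariance under the \emph{larger} group $E_{l,l}(Y_{l-1}U_{l-1,1})$, cut down by a support condition that only the smaller group $Q_{j-1,1^{n-j+1}}^{\circ}$ preserves. If you insist on an orbit/stabilizer description you must first re-prove, as a separate lemma, that this subspace is an induced $Q_{j-1,1^{n-j+1}}^{\circ}$-module (which requires, among other things, that $\Omega^{(l)}(1)$ is a single $Q_{j-1,1^{n-j+1}}^{\circ}$-orbit — a point you assert but do not verify), and that the resulting stabilizer data match those coming from $\Phi_{\diamond}^+\Psi_{\diamond}^+$.

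The paper's proof avoids all of this. It notes that under $Y_l<E_{l,l}(Y_l)<Y_n$ one has $\Omega^{(l)}(1)=\{\ell(x):k_x=j\}$, and that $E_{l,l}(Y_{l-1}U_{l-1,1})=Y_{n-1}U_{n-1,1}\cap E_{l,l}(Y_l)$, so that literal restriction $f\mapsto f|_{E_{l,l}(Y_l)}$ carries $\Phi_{\diamond}^{+;\Omega(j)}\Psi_{\diamond}^+(\pi_0)$ into $\mathcal{E}_{l,l}(\pi_0)$ with kernel exactly $\Phi_{\diamond}^{+;\Omega(j-1)}\Psi_{\diamond}^+(\pi_0)$; surjectivity is the only delicate step, and it is established by an explicit extension argument (mirroring the proof of Claim~\ref{claim:computations of Jacquet kernel and module for mathcal E}) that you largely gloss over. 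This restriction map is equivariant for $Q_{j-1,1^{n-j+1}}^{\circ}$ because that group sits inside $E_{l,l}(Y_l)$, so no stabilizer ever needs to be computed.

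If you want to salvage your route, the things to repair are: (i) compute the correct stabilizer $Q_{j-1,1^{n-j+1}}^{\circ}\cap\rconj{\ell(e_j)^{-1}}(Y_{n-1}U_{n-1,1})$ — it will be strictly smaller than $E_{l,l}(Y_{l-1}U_{l-1,1})$; (ii) independently prove $\mathcal{E}_{l,l}(\pi_0)$ is a compact induction from that same subgroup; (iii) verify transitivity of $Q_{j-1,1^{n-j+1}}^{\circ}$ on $\{k_x=j\}$. In contrast, the paper's restriction map is shorter and needs none of these.
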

\begin{proof}[Proof of Claim~\ref{claim:relating phi + with exact j to mathcal E}]
Put $l=n-j+1$. First observe that under the embedding $Y_l<E_{l,l}(Y_l)<Y_n$,
\begin{align*}
\Omega^{(l)}(1)=\{\ell(x):0\ne x\in F^{n-1},k_x=j\}.
\end{align*}
Also in general, restriction of a locally constant compactly supported function on $F^*\times F\cup F\times F^*$ to
$\{0\}\times F^*$ is a locally constant compactly supported function.
Hence restriction $f\mapsto f|_{E_{l,l}(Y_l)}$ defines
a mapping
\begin{align*}
\Phi_{\diamond}^{+;\Omega(j)}\Psi_{\diamond}^+(\pi_0)\rightarrow
\mathcal{E}_{l,l}(\pi_0)
\end{align*}
whose kernel is $\Phi_{\diamond}^{+;\Omega(j-1)}\Psi_{\diamond}^+(\pi_0)$.

To show this is onto, we argue as in the proof of Claim~\ref{claim:computations of Jacquet kernel and module for mathcal E}.
For $f_1\in\mathcal{E}_{l,l}(\pi_0)$, regard $f_1|_{\Omega^{(l)}}$ as a locally constant function, whose support is contained in
 $\Omega^{(l)}(1)$, and for $0\ne x\in F^{i-1}$,
\begin{align*}
f_1(\ell(x))\in\pi_0^{Y_{n-1}U_{n-1,1}\ \cap\ \rconj{\ell(x)}\mathcal{V}}.
\end{align*}
Here $\mathcal{V}<Y_n$ is compact open (in particular $\mathcal{V}\cap E_{l,l}(Y_l)$ is compact open).
Since $\Omega^{(l)}(1)$ is a closed subgroup of $F^{n-l}\times\Omega^{(l)}(1)$, and the latter is open in $\Omega$,
there is a locally constant function on $\Omega$, such that its restriction to $\Omega^{(l)}$ agrees
with $f_1|_{\Omega^{(l)}}$. Hence we can define $f\in \Phi_{\diamond}^{+;\Omega(j)}\Psi_{\diamond}^+(\pi_0)$ satisfying
$f|_{\Omega^{(l)}}=f_1|_{\Omega^{(l)}}$.
\end{proof}
Here is the main result of this section.
\begin{lemma}\label{lemma:L of Y_n module}
Let $\pi_0$ be a representation of $\GL_{n-2}$, $n\geq3$, and let $b\in\{0,1\}^{n-2}$.
If $b=0^{n-2}$, set $k=n-2$; if $b_1=1$, set $k=0$; otherwise let $k$ be the first index such that $b_k=0$ and $b_{k+1}=1$.
As a $B_n^{\circ}$-module
\begin{align*}
\mathcal{L}^{n,2}_{b}(\Phi_{\diamond}^+\Psi_{\diamond}^+(\pi_0))=\mathrm{s.s.}\bigoplus_{m=\min(1,k)}^k
\mathcal{E}_{n,m+2}
\mathcal{L}^{n-2,m}_{(0^{k-m},b_{k+1},\ldots,b_{n-2})}(\pi_0).
\end{align*}
\end{lemma}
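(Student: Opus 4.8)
The plan is to apply $\mathcal{L}^{n,2}_b$ to the $B_n^\circ$-filtration of $\Phi_\diamond^+\Psi_\diamond^+(\pi_0)$ whose quotients are identified in Claim~\ref{claim:relating phi + with exact j to mathcal E}, and then iterate Claim~\ref{claim:computations of Jacquet kernel and module for mathcal E} to compute $\mathcal{L}^{n,2}_b$ of each quotient $\mathcal{E}_{n-j+1,n-j+1}(\pi_0)$. Since $\mathcal{L}^{n,2}_b$ is exact (it is a composition of the exact functors $\mathcal{L}^{n,r}_{b_\bullet}$), applying it to the filtration
\begin{align*}
0\subset\Phi_{\diamond}^{+;\Omega(1)}\Psi_{\diamond}^+(\pi_0)\subset\ldots\subset\Phi_{\diamond}^{+;\Omega(n-1)}\Psi_{\diamond}^+(\pi_0)=\Phi_{\diamond}^+\Psi_{\diamond}^+(\pi_0)
\end{align*}
yields a filtration of $\mathcal{L}^{n,2}_b(\Phi_\diamond^+\Psi_\diamond^+(\pi_0))$ with subquotients $\mathcal{L}^{n,2}_b(\mathcal{E}_{n-j+1,n-j+1}(\pi_0))$ for $1\leq j\leq n-1$. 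So the task reduces to showing that, writing $i=n-j+1$ (so $i$ ranges over $2,\ldots,n$) and $m=i-2$, the subquotient $\mathcal{L}^{n,2}_b(\mathcal{E}_{i,i}(\pi_0))$ equals $\mathcal{E}_{n,m+2}\mathcal{L}^{n-2,m}_{(0^{k-m},b_{k+1},\ldots,b_{n-2})}(\pi_0)$ when $\min(1,k)\leq m\leq k$, and vanishes otherwise — and then to account for the reindexing so that the surviving terms are exactly $m=\min(1,k),\ldots,k$.

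The core computation is the following: $\mathcal{L}^{n,2}_b = \mathcal{L}^{n,n-1}_{b_{n-2}}\cdots\mathcal{L}^{n,3}_{b_2}\mathcal{L}^{n,2}_{b_1}$, so I would peel off the factors one at a time. First I would establish, for $\mathcal{E}_{i,i}(\pi_0)$ a $Q_{n-i,1^i}^\circ$-module, how each single factor $\mathcal{L}^{n,r}_{b_{r-1}}$ acts. There are three regimes. When $r-1<i-1$ (equivalently $r\leq i-1$), i.e., while we are still operating inside the ``$1^{j-i}$'' block below the $\GL_i$-corner where $\pi_0$ lives nontrivially only through $\psi$: here $\mathcal{E}_{i,i}$ carries a $\psi$-equivariance on the relevant unipotent (via \eqref{eq:action of u in geometric general n}), so taking the Jacquet kernel $\mathcal{L}^{n,r}_0$ gives everything back (the whole module) while $\mathcal{L}^{n,r}_1$ kills it — this forces $b_1=\cdots=b_{i-2}=0$ for a nonzero contribution, exactly the $(0^{k-m})$ prefix with $m=i-2$ when $m\leq k$; if some $b_t=1$ with $t\leq i-2$ the term dies. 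Once we reach the index $r$ with $r-1\geq i-1$, i.e., $m=i-2\leq r-2$, we are in the range covered by Claim~\ref{claim:computations of Jacquet kernel and module for mathcal E} with its parameter $j$ there set to our current column count and $m$ there equal to $1$: repeatedly applying that claim (which says $\mathcal{L}^{n,j+1}_b\mathcal{E}_{j,i}=\mathcal{E}_{j+1,i}\mathcal{L}^{n-2,j-1}_b$) pushes all remaining factors $\mathcal{L}^{n,r}_{b_{r-1}}$ past $\mathcal{E}$, converting them into $\mathcal{L}^{n-2,r-2}_{b_{r-1}}$ and incrementing the first index of $\mathcal{E}$ until it reaches $n$. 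Using \eqref{eq:increasing by 1 when j = n-1} to handle the last step $j=n-1\to n$, the net effect is $\mathcal{E}_{n,i}\,\mathcal{L}^{n-2,i-2}_{(b_{i-1},\ldots,b_{n-2})}(\pi_0)$. Setting $m=i-2$ and noting that for the nonvanishing we needed $(b_1,\ldots,b_{i-2})=0^{i-2}$, i.e., $(0^{k-m},b_{k+1},\ldots,b_{n-2})$ agrees with $(b_1,\ldots,b_{n-2})$ precisely when $m\leq k$, this is the claimed summand; and the constraint $m\geq\min(1,k)$ comes from the degenerate case $i=2$ (where $\mathcal{E}_{2,2}$ involves no ``$1^{j-i}$'' block): when $k\geq1$ we must have $b_1=0$ to survive the very first factor $\mathcal{L}^{n,2}_{b_1}$, which excludes $m=0$, whereas when $k=0$ (so $b_1=1$) the only surviving term is $m=0$.

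I would organize the write-up as: (i) invoke exactness of $\mathcal{L}^{n,2}_b$ and the Claim~\ref{claim:relating phi + with exact j to mathcal E} filtration to reduce to computing $\mathcal{L}^{n,2}_b\mathcal{E}_{i,i}(\pi_0)$; (ii) prove the ``low-$r$'' dichotomy (Jacquet kernel is identity, Jacquet module is zero, against the $\psi$-direction) directly from the definition of $\mathcal{E}_{i,i}$ and \eqref{eq:action of u in geometric general n}, extracting the condition that $b$ must begin with $0^{i-2}$ and that $\mathcal{L}^{n,r}_0$ for $r\leq i-1$ is harmless; (iii) apply Claim~\ref{claim:computations of Jacquet kernel and module for mathcal E} repeatedly (with its $m$-parameter equal to $1$, and once with $m=0$ trivially, plus \eqref{eq:increasing by 1 when j = n-1}) to slide the remaining $\mathcal{L}$-factors through $\mathcal{E}$; (iv) reconcile indices, set $m=i-2$, and read off that exactly the values $m=\min(1,k),\ldots,k$ survive with the stated summand. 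The main obstacle I anticipate is step (ii) — cleanly justifying the $\psi$-twist dichotomy at the level of the exact functors $\mathcal{L}^{n,r}_b$ (as opposed to for a single representation), keeping careful track of which columns of $U_{n-r,r}$ lie in $U_{n-1,1}$ where $\psi$ is nontrivial versus where $\psi$ is trivial, so that the ``kill vs.\ keep'' alternative is applied to the right block — together with the bookkeeping in step (iv) to match the index $k$ (defined by the first ascent $b_k=0,b_{k+1}=1$) with the range of $m$, including the boundary cases $b=0^{n-2}$ (giving $k=n-2$, all $m$ from $1$ to $n-2$ alive, plus $m=0$... here one must check $\min(1,k)=1$ is correct, i.e., the $m=0$ term genuinely vanishes because $\mathcal{L}^{n,2}_0\mathcal{E}_{2,2}$ is the whole thing but then later factors with $b=0$ keep it — so actually one re-examines whether $m=0$ should appear, and the statement's $\min(1,k)$ encodes that it does not, which must be verified) and $b_1=1$ (giving $k=0$, only $m=0$).
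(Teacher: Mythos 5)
Your route is essentially the paper's argument, reorganized. The paper keeps the whole module and processes the coordinates of $b$ in order through three nested claims (Claims~\ref{claim:intersection of Jacquet modules up to k}, \ref{claim:co-invariant at k+1 of intersection of Jacquet modules}, \ref{claim:rest of b Jacquet module after b_1=0}), whereas you semisimplify at the outset along the $\Omega(j)$-filtration and compute $\mathcal{L}^{n,2}_b$ on each subquotient $\mathcal{E}_{i,i}(\pi_0)$ furnished by Claim~\ref{claim:relating phi + with exact j to mathcal E}; since the assertion is only an ``s.s.''\ identity and the functors are exact, this reduction is legitimate, and the two ingredients you then invoke --- the $\psi$-nontriviality ``kernel is everything / quotient is zero'' dichotomy for the factors $\mathcal{L}^{n,r}_{b_{r-1}}$ with $r\le i-1$, and Claim~\ref{claim:computations of Jacquet kernel and module for mathcal E} together with \eqref{eq:increasing by 1 when j = n-1} for $r\ge i$ --- are exactly those the paper uses inside its claims. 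For the dichotomy, note that you need (as in the corresponding step of the proof of Claim~\ref{claim:intersection of Jacquet modules up to k}) that $f|_{\Omega^{(i)}}$ has compact support in $F^*\times F^{i-2}$, so that $|x_1|$ is bounded away from zero on the support and a single fixed compact subgroup of $U_{n-r,r}\cap U_{n-1,1}$ annihilates the average simultaneously for all points of the support.

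The one genuine loose end is the boundary case you flagged but leaned the wrong way on: for $i=2$ (i.e.\ $m=0$) your guess that ``$\mathcal{L}^{n,2}_0\mathcal{E}_{2,2}$ is the whole thing'' is false. On $\mathcal{E}_{2,2}(\pi_0)$ the group $U_{n-2,2}$ acts trivially: the character in \eqref{eq:action of u in geometric general n} pairs only with the coordinates that vanish on $\Omega^{(2)}(1)$, and $\pi_0$ is extended trivially on the remaining column. Hence the Jacquet kernel vanishes, so $\mathcal{L}^{n,2}_0\mathcal{E}_{2,2}(\pi_0)=0$ while $\mathcal{L}^{n,2}_1\mathcal{E}_{2,2}(\pi_0)=\mathcal{E}_{2,2}(\pi_0)$; equivalently, this is the degenerate case of Claim~\ref{claim:computations of Jacquet kernel and module for mathcal E} with $j=i=2$ and its $m$-parameter equal to $0$, which gives $\mathcal{L}^{n,2}_{b_1}\mathcal{E}_{2,2}=\mathcal{E}_{2,2}\mathcal{L}^{n-2,0}_{b_1}$ with $\mathcal{L}^{n-2,0}_0=0$ and $\mathcal{L}^{n-2,0}_1=\mathrm{id}$. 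Thus the $m=0$ subquotient survives exactly when $b_1=1$, i.e.\ $k=0$, which is precisely what the lower limit $\min(1,k)$ in the statement records; with this point settled, your argument goes through and reproduces the lemma.
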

\begin{proof}[Proof of Lemma~\ref{lemma:L of Y_n module}]
We prove the lemma in three steps. 

\begin{claim}\label{claim:intersection of Jacquet modules up to k}
For $1\leq k\leq n-2$, as a $Q_{n-k-1,1^{k+1}}^{\circ}$-module $\mathcal{L}^{n,2}_{0^k}\Phi_{\diamond}^+\Psi_{\diamond}^+(\pi_0)$ is glued from
\begin{align*}
&\Phi_{\diamond}^{+;\Omega(n-k-1)}\Psi_{\diamond}^+(\pi_0),\\
&\mathcal{E}_{k+1,m+2}\mathcal{L}^{n-2,m}_{0^{k-m}}(\pi_0),\qquad 1\leq m\leq k-1.
\end{align*}
\end{claim}

\begin{claim}\label{claim:co-invariant at k+1 of intersection of Jacquet modules}
If $0\leq k\leq n-3$, as $Q_{n-k-2,1^{k+2}}^{\circ}$-modules
\begin{align*}
\mathcal{L}^{n,2}_{(0^k,1)}\Phi_{\diamond}^+\Psi_{\diamond}^+(\pi_0)=\mathrm{s.s.}\bigoplus_{m=\min(1,k)}^k
\mathcal{E}_{k+2,m+2}
\mathcal{L}^{n-2,m}_{(0^{k-m},1)}(\pi_0).
\end{align*}
\end{claim}

\begin{claim}\label{claim:rest of b Jacquet module after b_1=0}
For $0\leq k\leq n-4$ and $\min(1,k)\leq m\leq k$, 
as $B_n^{\circ}$-modules
\begin{align*}
\mathcal{L}^{n,k+3}_{(b_{k+2},\ldots,b_{n-2})}\mathcal{E}_{k+2,m+2}(
\mathcal{L}^{n-2,m}_{(0^{k-m},1)}(\pi_0))
=\mathcal{E}_{n,m+2}
\mathcal{L}^{n-2,m}_{(0^{k-m},1,b_{k+2},\ldots,b_{n-2})}(\pi_0).
\end{align*}
\end{claim}

The lemma follows from these claims (proved below). Specifically,
for $k=n-2$ apply Claim~\ref{claim:intersection of Jacquet modules up to k}, use
$\Phi_{\diamond}^{+;\Omega(1)}\Psi_{\diamond}^+(\pi_0)=\mathcal{E}_{n,n}(\pi_0)$
and \eqref{eq:increasing by 1 when j = n-1} for $1\leq m\leq n-3$.
If $k=n-3$, the result is stated in Claim~\ref{claim:co-invariant at k+1 of intersection of Jacquet modules} (and again use
\eqref{eq:increasing by 1 when j = n-1}).
For $k\leq n-4$ apply Claims~\ref{claim:co-invariant at k+1 of intersection of Jacquet modules} and \ref{claim:rest of b Jacquet module after b_1=0}, note that
\begin{align*}
\mathcal{L}^{n,2}_{b}=\mathcal{L}^{n,k+3}_{(b_{k+2},\ldots,b_{n-2})}\mathcal{L}^{n,2}_{(0^k,1)}
\end{align*}
and these functors are exact.
\begin{proof}[Proof of Claim~\ref{claim:intersection of Jacquet modules up to k}]
By definition
\begin{align*}
\mathcal{L}^{n,2}_{0^k}\Phi_{\diamond}^+\Psi_{\diamond}^+(\pi_0)=\bigcap_{m=2}^{k+1}\Phi_{\diamond}^+\Psi_{\diamond}^+(\pi_0)(U_{n-m,m}).
\end{align*}
Let $2\leq l\leq n-1$. According to Claim~\ref{claim:relating phi + with exact j to mathcal E} (with $j=n-l+1$) and the exactness of taking a Jacquet kernel, as a $Q_{n-l,1^l}^{\circ}$-module $\Phi_{\diamond}^{+;\Omega(n-l+1)}(U_{n-l,l})$ is glued from
\begin{align*}
\Phi_{\diamond}^{+;\Omega(n-l)}\Psi_{\diamond}^+(\pi_0)(U_{n-l,l}),\qquad
\mathcal{E}_{l,l}(\pi_0)(U_{n-l,l}).
\end{align*}

If $u\in U_{n-l,l}\cap U_{n-1,1}$ and $k_x\leq n-l$,
the character appearing on the right-hand side of \eqref{eq:action of u in geometric general n} is a nontrivial function of $u$. Hence
\begin{align*}
\Phi_{\diamond}^{+;\Omega(n-l)}\Psi_{\diamond}^+(\pi_0)(U_{n-l,l})
=\Phi_{\diamond}^{+;\Omega(n-l)}\Psi_{\diamond}^+(\pi_0).
\end{align*}
Also by Claim~\ref{claim:computations of Jacquet kernel and module for mathcal E} (with $j=l$ and $m=b=0$)
\begin{align*}
\mathcal{E}_{l,l}(\pi_0)(U_{n-l,l})=\mathcal{E}_{l,l}(\pi_0(U_{n-l,l-2})).
\end{align*}
Hence $\Phi_{\diamond}^{+;\Omega(n-l+1)}(U_{n-l,l})$ is glued from
\begin{align*}
\Phi_{\diamond}^{+;\Omega(n-l)}\Psi_{\diamond}^+(\pi_0),\qquad
\mathcal{E}_{l,l}(\pi_0(U_{n-l,l-2})).
\end{align*}
The result follows from a repeated application of this observation 
and Claim~\ref{claim:computations of Jacquet kernel and module for mathcal E}.
\end{proof}

\begin{proof}[Proof of Claim~\ref{claim:co-invariant at k+1 of intersection of Jacquet modules}]
Since $\mathcal{L}^{n,2}_{(0^k,1)}=\mathcal{L}^{n,k+2}_{1}\mathcal{L}^{n,2}_{0^k}$ and these functors are exact,
we can apply $\mathcal{L}^{n,k+2}_{1}$ to the factors computed by Claim~\ref{claim:intersection of Jacquet modules up to k}.
As explained in the proof of Claim~\ref{claim:intersection of Jacquet modules up to k} (with $l=k+2$, $2\leq l\leq n-1$ because
$0\leq k\leq n-3$),
\begin{align*}
\Phi_{\diamond}^{+;\Omega(n-k-2)}\Psi_{\diamond}^+(\pi_0)=\mathcal{L}^{n,k+2}_{0}\Phi_{\diamond}^{+;\Omega(n-k-2)}\Psi_{\diamond}^+(\pi_0). \end{align*}
Hence
\begin{align*}
\mathcal{L}^{n,k+2}_{1}\Phi_{\diamond}^{+;\Omega(n-k-2)}\Psi_{\diamond}^+(\pi_0)=0
\end{align*}
and
\begin{align*}
\mathcal{L}^{n,k+2}_{1}\Phi_{\diamond}^{+;\Omega(n-k-1)}\Psi_{\diamond}^+(\pi_0)
&=\mathcal{L}^{n,k+2}_{1}\left(\frac{\Phi_{\diamond}^{+;\Omega(n-k-1)}\Psi_{\diamond}^+(\pi_0)}{\Phi_{\diamond}^{+;\Omega(n-k-2)}\Psi_{\diamond}^+(\pi_0)}\right)\\
&=\mathcal{L}^{n,k+2}_{1}\mathcal{E}_{k+2,m+2}(\pi_0)=\mathcal{E}_{k+2,m+2}\mathcal{L}^{n-2,k}_{1}(\pi_0).
\end{align*}
Here the second equality follows from Claim~\ref{claim:relating phi + with exact j to mathcal E} and the third from
Claim~\ref{claim:computations of Jacquet kernel and module for mathcal E}. Note that for $k\leq 1$ this already gives the result,
because then $k=m$.

For $1\leq m\leq k-1$, applying Claim~\ref{claim:computations of Jacquet kernel and module for mathcal E}
and using $\mathcal{L}^{n-2,k}_1\mathcal{L}^{n-2,m}_{0^{k-m}}=\mathcal{L}^{n-2,m}_{(0^{k-m},1)}$,
\begin{align*}
\mathcal{L}^{n,k+2}_1\mathcal{E}_{k+1,m+2}(\mathcal{L}^{n-2,m}_{0^{k-m}}(\pi_0))
=\mathcal{E}_{k+2,m+2}\mathcal{L}^{n-2,m}_{(0^{k-m},1)}(\pi_0).
\end{align*}
The result follows.
\end{proof}

\begin{proof}[Proof of Claim~\ref{claim:rest of b Jacquet module after b_1=0}]
This follows from a repeated application of Claim~\ref{claim:computations of Jacquet kernel and module for mathcal E}.
\end{proof}
\end{proof}

\subsection{Jacquet modules of $\mathcal{E}_{n,i}^-(\pi_0)$}\label{subsection:Jacquet modules of mathcal E - n i}
Recall the functor
\begin{align*}
\mathcal{E}_{n,i}^-:\Alg{Q_{1^{n-i},i-2}}\rightarrow\Alg{B_{n}^{\circ}}
\end{align*}
defined in Section~\ref{subsection:B_ncirc filtrations of phi psi}. Assume $n\geq3$. We compute $\mathcal{L}^{n,1}_{1^{n-1}}\mathcal{E}_{n,i}^-$.

We describe an operation of a partial Fourier transform on
$\mathcal{E}_{n,i}^-(\pi_0)$. Let $\mathcal{S}(\Omega^{(i)}(1),\pi_0)$ be the space of Schwartz-Bruhat functions
on $\Omega^{(i)}(1)\isomorphic F^*\times F^{i-2}$ taking values in the space of $\pi_0$. Let $f\in\mathcal{E}_{n,i}^-(\pi_0)$. Then $f_{\triangle}=f|_{\Omega^{(i)}(1)}\in\mathcal{S}(\Omega^{(i)}(1),\pi_0)$ (see Section~\ref{subsection:filtration of induced representations}). Consider the partial Fourier transform of $f_{\triangle}$,
\begin{align*}
\widehat{f_{\triangle}}(\ell(x))=\int_{F^{i-2}}f_{\triangle}(\ell(x_1,y))\psi^{-1}((x_2,\ldots,x_{i-1})\transpose{y})dy.
\end{align*}
Here $\transpose{y}$ denotes the transpose of the row $y$. Then $\widehat{f_{\triangle}}\in\mathcal{S}(\Omega^{(i)}(1),\pi_0)$. The
mapping $f\mapsto \widehat{f_{\triangle}}$ is a linear embedding of $\mathcal{E}_{n,i}^-(\pi_0)$ in $\mathcal{S}(\Omega^{(i)}(1),\pi_0)$, whose image is denoted $\widehat{\mathcal{E}}_{n,i}^-(\pi_0)$. This embedding is extended to an embedding of $B_n^{\circ}$-modules by defining $b\cdot\widehat{f_{\triangle}}=\widehat{(bf)_{\triangle}}$. When $i=2$, the Fourier transform is trivial and $\widehat{f_{\triangle}}=f_{\triangle}$.

We will need more explicit formulas, for the $B_n^{\circ}$ action on $\widehat{\mathcal{E}}_{n,i}^-(\pi_0)$ in a few cases. First observe that for any $\ell(x)\in\Omega^{(i)}(1)$, $1\leq l\leq i-2$ and $u\in U_{l,1}<\GL_{l+1}<Y_i$,
\begin{align}\label{eq:multiplication s_n(x) b u}
\ell(x)u=u_0\ell(x+(0_l,\sum_{m=1}^lx_mu_m,0_{i-2-l})).
\end{align}
Here $u_0$ is the element in $U_{l-1,1}$, corresponding to
the column obtained from $u$ by removing its first coordinate.
\begin{example}
When $i=n=4$,
\begin{align*}
&\left(\begin{array}{cccc}&1\\&&1\\x_1&x_2&x_3&\\&&&1\end{array}\right)
\left(\begin{array}{cccc}1&v_1&v_2\\&1&v_3\\&&1&\\&&&1\end{array}\right)
=\left(\begin{array}{cccc}1&v_3\\&1&\\&&1&\\&&&1\end{array}\right)
\left(\begin{array}{cccc}&1\\&&1\\x_1&x_2+x_1v_1&x_3+x_1v_2+x_2v_3&\\&&&1\end{array}\right).
\end{align*}
\end{example}
Now for $0\ne x\in F^{i-1}$, $u\in U_{n-i+l,1}\cap Y_i$ with $1\leq l\leq i-2$, $t=diag(I_{n-i},t_1,\ldots,t_{i-1},1)$ and $b\in B_{n-i}$,
\begin{align}\label{eq:multiplication s_n(x) b u after Fourier}
&u\cdot\widehat{f_{\triangle}}(\ell(x))=
\psi(x_{l+1}(x_1,\ldots,x_{l})u)\ \pi_0(u_0)\ \widehat{f_{\triangle}}(\ell(x)),\\
\label{eq:multiplication t after Fourier}
&t\cdot\widehat{f_{\triangle}}(\ell(x))=\prod_{j=2}^{i-1}|t_{j}|^{-1}\ \pi_0(diag(I_{n-i},t_2,\ldots,t_{i-1}))\ \widehat{f_{\triangle}}
(\ell(t_{1}x_1,t_{2}^{-1}x_2,\ldots,t_{i-1}^{-1}x_{i-1})),\\
\label{eq:multiplication b after Fourier}
&b\cdot\widehat{f_{\triangle}}(\ell(x))=\pi_0(b)\widehat{f_{\triangle}}(\ell(x)).
\end{align}
Here $u_0$ is defined as above and if $i=2$,
$diag(I_{n-i},t_2,\ldots,t_{i-1})=I_{n-2}$. Equality~\eqref{eq:multiplication s_n(x) b u after Fourier} follows from
\eqref{eq:multiplication s_n(x) b u}; \eqref{eq:multiplication b after Fourier} holds because $Y_i$ commutes with $B_{n-i}$. 

\begin{lemma}\label{lemma:Jacquet functor without character}
Regard $(\pi_0)_{N_{n-2}}$ as a $T_{n-2}$-module. Then as $T_n^{\circ}$-modules,
\begin{align*}
\mathcal{E}_{n,i}^-(\pi_0)_{N_{n-1}}\isomorphic\ind_{\rconj{w_i}T_{n-2}}^{T_n^{\circ}}(\delta_{Y_i}^{-1}\ \rconj{w_i}((\pi_0)_{N_{n-2}})),
\qquad w_i=diag(I_{n-i},\left(\begin{smallmatrix}&1\\I_{i-1}\end{smallmatrix}\right)).
\end{align*}
\end{lemma}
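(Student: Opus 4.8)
The plan is to compute the coinvariants in the Fourier model $\widehat{\mathcal{E}}_{n,i}^-(\pi_0)$, peeling off $N_{n-1}$ one block at a time. View $\GL_{n-1}$ through its standard parabolic with Levi $\GL_{n-i}\times\GL_{i-1}$ (the factors occupying coordinates $1,\ldots,n-i$ and $n-i+1,\ldots,n-1$), so that $N_{n-1}$ has as a normal subgroup the off-diagonal unipotent block $U'$ (rows $\le n-i$, columns in $[n-i+1,n-1]$), with quotient $N_{n-i}\times N_{i-1}$. A short matrix computation shows $N_{i-1}\subset Y_{i-1}$, $N_{n-i}\subset B_{n-i}$ and $U'\subset U_{n-i,i}$, so all of $N_{n-1}$ lies in $E_{n,i}(Y_{i-1}U_{i-1,1})$ and hence acts through $\pi_0\otimes1$ once pushed past the representatives $\ell(x)$; by the identities of \cite{BZ1} (2.32-2.33) it then suffices to compute, in turn, $(\cdot)_{U'}$, then $(\cdot)_{N_{i-1}}$, then $(\cdot)_{N_{n-i}}$ of the result (all un-normalized).

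First, $(\cdot)_{U'}$. Since $\ell(x)$ normalizes the glue $U_{n-i,i}$, for $u'\in U'$ we have $\ell(x)u'=(\rconj{\ell(x)}u')\ell(x)$ with $\rconj{\ell(x)}u'\in U_{n-i,i}$; writing $\ell(x)^{-1}$ out explicitly, the component of $\rconj{\ell(x)}u'$ in $\GL_{n-2}$ (columns $n-i+1,\ldots,n-2$) runs, for each fixed $x$, over the full unipotent radical $V$ of the $(n-i,i-2)$-parabolic of $\GL_{n-2}$, while the columns $n-1,n$ contribute nothing since $\pi_0\otimes1$ is trivial there. Feeding this into the Jacquet--Langlands description of the Jacquet kernel, together with the compact support of $f|_{\Omega^{(i)}(1)}$ (so that the auxiliary compact subgroup can be chosen uniformly), gives $\widehat{\mathcal{E}}_{n,i}^-(\pi_0)_{U'}\cong\mathcal{S}(\Omega^{(i)}(1),(\pi_0)_V)$, with no modulus. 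I expect this step to be the main obstacle: the linear surjection $U'\to V$ depends on $x$, and one must verify that the coinvariants are nonetheless computed fibrewise -- which holds precisely because this map is onto for every $x\in\Omega^{(i)}(1)$.

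Next, $(\cdot)_{N_{i-1}}$ and $(\cdot)_{N_{n-i}}$ of $\mathcal{S}(\Omega^{(i)}(1),(\pi_0)_V)$. By \eqref{eq:multiplication s_n(x) b u after Fourier}, the root subgroup of $N_{i-1}$ indexed by $l$ ($1\le l\le i-2$) acts by the character $\psi(x_{l+1}\sum_{m\le l}x_m u_m)$ times $(\pi_0)_V(u_0)$; since $x_1\ne0$ on $\Omega^{(i)}(1)$, the $u_1$-directions force the support onto $\{x_2=\ldots=x_{i-1}=0\}\cong F^*$, on which the characters vanish and the remaining directions act through $(\pi_0)_V$ via the truncations $u\mapsto u_0$, which together run over the maximal unipotent $N_{i-2}$ of $\GL_{i-2}$. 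Thus $\widehat{\mathcal{E}}_{n,i}^-(\pi_0)_{U'N_{i-1}}\cong\mathcal{S}(F^*,((\pi_0)_V)_{N_{i-2}})$. Finally $N_{n-i}\subset B_{n-i}$ acts fibrewise by $\pi_0(\cdot)$ via \eqref{eq:multiplication b after Fourier}, and since $N_{n-2}=N_{n-i}\cdot V\cdot N_{i-2}$ we obtain $\mathcal{E}_{n,i}^-(\pi_0)_{N_{n-1}}\cong\mathcal{S}(F^*,(\pi_0)_{N_{n-2}})$ as a vector space.

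It remains to read off the $T_n^\circ$-action. By \eqref{eq:multiplication t after Fourier} and \eqref{eq:multiplication b after Fourier}, the coordinate of $T_n^\circ$ at place $n-i+1$ translates the $F^*$-variable by multiplication, so $T_n^\circ$ is the direct product of this $F^*$ and the complementary subgroup, which is exactly $\rconj{w_i}T_{n-2}$ -- the appearance of $w_i$ being dictated by the relabelling of torus coordinates $n-i+1+k\mapsto n-i+k$ in \eqref{eq:multiplication t after Fourier} that matches the $\GL_i$-block torus of $T_n^\circ$ with $T_{n-2}$. For $s\in\rconj{w_i}T_{n-2}$ the fibre action is $(\pi_0)_{N_{n-2}}(\rconj{w_i^{-1}}s)$ scaled by the factor $\prod_{j=2}^{i-1}|t_j|^{-1}$ from \eqref{eq:multiplication t after Fourier}, and this factor equals $\delta_{Y_i}^{-1}(s)$ because the $\GL_{i-1}$-part of such an $s$ inside $Y_i$ is $diag(1,t_2,\ldots,t_{i-1})$. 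Since the induction defining $\mathcal{E}_{n,i}^-$ is un-normalized no further modulus appears, and $\mathcal{S}(F^*,(\pi_0)_{N_{n-2}})$ carrying this action is exactly $\ind_{\rconj{w_i}T_{n-2}}^{T_n^\circ}(\delta_{Y_i}^{-1}\,\rconj{w_i}((\pi_0)_{N_{n-2}}))$. The small cases are handled separately: when $i=2$ the Fourier transform and $N_{i-1}$ are trivial and only \eqref{eq:multiplication b after Fourier} enters; when $i=n$ there is no $\GL_{n-i}$-factor.
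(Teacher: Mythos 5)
Your proposal is correct and follows essentially the same route as the paper. The only organizational difference is that you peel off $N_{n-1}$ in three steps ($U'$, then $N_{i-1}$, then $N_{n-i}$) while the paper compresses $U'$ and $N_{n-i}$ into a single step by computing $(\cdot)_{U_{1^{n-i},i}}$ via the variant of Claim~\ref{claim:computations of Jacquet kernel and module for mathcal E} (the identity $\mathcal{E}_{n,i}^-(\pi_0)_{U_{1^{n-i},i}}=\mathcal{E}_{n,i}^-((\pi_0)_{U_{1^{n-i},i-2}})$, which rests on exactly the $\ell(x)$-conjugation/surjectivity observation you flagged as the main obstacle), and then applies the Fourier-model analysis of $N_{i-1}$-coinvariants and reads off the $T_n^\circ$-action exactly as you do.
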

\begin{remark}
Note that $T_n^{\circ}=T_{n-1}$ and $\rconj{w_i}T_{n-2}=\{diag(t_1,\ldots,t_{n-i},1,t_{n-i+1},\ldots,t_{n-2},1)\}$.
\end{remark}
\begin{proof}[Proof of Lemma~\ref{lemma:Jacquet functor without character}]
Regard $N_{i-1}<\GL_{i-1}$ as a subgroup of $Y_i$ (embedded in $E_{n,i}(Y_i)$). Then
\begin{align*}
\mathcal{E}_{n,i}^-(\pi_0)_{N_{n-1}}=
(\mathcal{E}_{n,i}^-(\pi_0)_{U_{1^{n-i},i-1}})_{N_{i-1}}.
\end{align*}
Since $U_{n-1,1}$ acts trivially on $\mathcal{E}_{n,i}^-(\pi_0)$,
$\mathcal{E}_{n,i}^-(\pi_0)_{U_{1^{n-i},i-1}}=\mathcal{E}_{n,i}^-(\pi_0)_{U_{1^{n-i},i}}$.
Observe that
\begin{align}\label{eq:Fourier claim starting identity}
\mathcal{E}_{n,i}^-(\pi_0)_{U_{1^{n-i},i}}=\mathcal{E}_{n,i}^-((\pi_0)_{U_{1^{n-i},i-2}}).
\end{align}
Indeed, similarly to the proof of Claim~\ref{claim:computations of Jacquet kernel and module for mathcal E}, since
$Y_i$ normalizes $U_{1^{n-i},i}$ and $\pi_0$ is trivial on the last two columns of $U_{1^{n-i},i}$ ($i\geq 2$),
\begin{align*}
(\mathcal{E}_{n,i}^-(\pi_0))(U_{1^{n-i},i})=\mathcal{E}_{n,i}^-(\pi_0(U_{1^{n-i},i-2})).
\end{align*}
Then \eqref{eq:Fourier claim starting identity} holds because $(\pi_0)_{U_{1^{n-i},i-2}}$ is still a representation of $Q_{1^{n-i},i-2}$ and $\mathcal{E}^-_{n,i}$ is exact.

Put $\vartheta=(\pi_0)_{U_{1^{n-i},i-2}}$, then
\begin{align*}
\mathcal{E}_{n,i}^-(\pi_0)_{N_{n-1}}\isomorphic
\widehat{\mathcal{E}}_{n,i}^-(\vartheta)_{N_{i-1}}.
\end{align*}

Next we claim
\begin{align}\label{eq:Fourier claim second identity}
(\widehat{\mathcal{E}}_{n,i}^-(\vartheta))(N_{i-1})=\{\widehat{f_{\triangle}}\in\widehat{\mathcal{E}}_{n,i}^-(\vartheta):\widehat{f_{\triangle}}(\ell(x_1,0^{i-2}))\in\vartheta(N_{i-2}),\forall x_1\in F^*\}.
\end{align}
This is trivial when $i=2$ (both sides are equal to zero), assume $i>2$.
If $\widehat{f_{\triangle}}\in (\widehat{\mathcal{E}}_{n,i}^-(\vartheta))(N_{i-1})$, Equality~\eqref{eq:Jacquet kernel as integral} implies that for some compact subgroup $\mathcal{N}<N_{i-1}$,
\begin{align}\label{eq:Jacquet integral vanishes on 0^i-2}
\int_{\mathcal{N}}v\cdot \widehat{f_{\triangle}}(\ell(x_1,0^{i-2}))\ dv=0,\qquad \forall x_1\in F^*.
\end{align}
Since \eqref{eq:multiplication s_n(x) b u after Fourier} shows
$v\cdot \widehat{f_{\triangle}}(\ell(x_1,0^{i-2}))=\vartheta(v_0)\widehat{f_{\triangle}}(\ell(x_1,0^{i-2}))$ and if $v$ varies in
a large compact subgroup of $N_{i-1}$, $v_0$ varies in a large compact subgroup of $N_{i-2}$, 
we see that $\widehat{f_{\triangle}}$ belongs to the right-hand side of \eqref{eq:Fourier claim second identity}.

In the other direction, let $\mathcal{N}<N_{i-1}$ be compact such that
\eqref{eq:Jacquet integral vanishes on 0^i-2} holds. Let $m_0$ be such that the support of $\widehat{f_{\triangle}}$ in $x_1$ is contained in $\{x\in F^*:q^{-m_0}<|x|<q^{m_0}\}$, where $q$ is the residual characteristic of the field. Select a large enough $m$, with respect to $\widehat{f_{\triangle}}$, $\mathcal{N}$ and $m_0$, such that if $q^{-m_0}<|x_1|<q^{m_0}$, $|x_2|\leq q^{-m},\ldots,|x_{i-1}|\leq q^{-m}$, then
\begin{align*}
&\widehat{f_{\triangle}}(\ell(x_1,\ldots,x_{i-1}))=
\widehat{f_{\triangle}}(\ell(x_1,0^{i-2})),\\
&\psi(x_{l+1}(x_1,\ldots,x_l)v_i)=1,\qquad\forall v=v_1\ldots v_{i-2}\in\mathcal{N},\ v_i\in U_{n-i+l,1},\
1\leq l\leq i-2.
\end{align*}
Note that if $v\in\mathcal{N}$, we can write uniquely $v=v_1\ldots v_{i-2}$ with $v_i\in U_{n-i+l,1}$, then the coordinates of $v_i$ are bounded from above by a constant depending on $\mathcal{N}$ and $i$.
Next take a compact $\mathcal{N}<\mathcal{N}_1<N_{i-1}$ such that if $q^{-m_0}<|x_1|<q^{m_0}$ and $|x_{l+1}|>q^{-m}$ for some $1\leq l\leq i-2$,
\begin{align*}
\int\psi(x_{l+1}x_1a)\ da=0,
\end{align*}
where $a$ varies over any nontrivial coordinate of $\mathcal{N}_1$.

We show that for all $0\ne x\in F^{i-1}$,
\begin{align}\label{eq:Jacquet integral vanishes on 0^i-2 2}
\int_{\mathcal{N}_1}v\cdot\widehat{f_{\triangle}}(\ell(x))\ dv=0.
\end{align}

Observe that by
\eqref{eq:multiplication s_n(x) b u after Fourier},
if $\widehat{f_{\triangle}}(\ell(x))=0$, $v\cdot\widehat{f_{\triangle}}(\ell(x))=0$ for all $v\in N_{i-1}$. Hence
\eqref{eq:Jacquet integral vanishes on 0^i-2 2} holds unless $q^{-m_0}<|x_1|<q^{m_0}$. If $|x_2|,\ldots,|x_{i-1}|\leq q^{-m}$,
Equality~\eqref{eq:multiplication s_n(x) b u after Fourier} implies
\begin{align*}
v\cdot\widehat{f_{\triangle}}(\ell(x))=v\cdot\widehat{f_{\triangle}}(\ell(x_1,0^{i-2})),\qquad\forall v\in\mathcal{N}.
\end{align*}
Hence by \eqref{eq:Jacquet integral vanishes on 0^i-2},
\begin{align*}
\int_{\mathcal{N}}v\cdot\widehat{f_{\triangle}}(\ell(x))\ dv=0
\end{align*}
and \eqref{eq:Jacquet integral vanishes on 0^i-2 2} follows.
Otherwise $|x_{l+1}|>q^{-m}$ for some $1\leq l\leq i-2$ and then
\begin{align*}
\int_{\mathcal{N}_1\cap U_{n-i+l,1} \cap U_{n-i+1,i-1}}v\cdot\widehat{f_{\triangle}}(\ell(x))\ dv=\widehat{f_{\triangle}}(\ell(x))
\int_{\mathcal{N}_1\cap U_{n-i+l,1} \cap U_{n-i+1,i-1}}\psi(x_{l+1}x_1v)\ dv=0.
\end{align*}
We conclude that \eqref{eq:Jacquet integral vanishes on 0^i-2 2} holds for all $x$.

Now consider the function
\begin{align*}
f_1(t)=t\cdot \widehat{f_{\triangle}}(\ell(1,0^{i-2}))+\vartheta(N_{i-2}),\qquad t\in T_n^{\circ}.
\end{align*}
Equalities~\eqref{eq:multiplication t after Fourier}, \eqref{eq:multiplication b after Fourier} and \eqref{eq:Fourier claim second identity}
imply that $\widehat{f_{\triangle}}\mapsto f_1$ is an isomorphism between
$\widehat{\mathcal{E}}_{n,i}^-(\vartheta)_{N_{i-1}}$ and $\ind_{\rconj{w_i}T_{n-2}}^{T_n^{\circ}}(\delta_{Y_i}^{-1}\ \rconj{w_i}(\vartheta_{N_{i-2}}))$. The latter is isomorphic to
$\ind_{\rconj{w_i}T_{n-2}}^{T_n^{\circ}}(\delta_{Y_i}^{-1}\ \rconj{w_i}((\pi_0)_{N_{n-2}}))$.
\end{proof}

\section{Distinguished representations}\label{section:Distinguished representations}
\subsection{Definitions}\label{subsection:definitions}
Let $\tau$ be an admissible representation of $\GL_n$ with a central character $\omega_{\tau}$. Let $\chi$ and $\chi'$ be characters of $F^*$ and let $\gamma$ and $\gamma'$ be a pair of pseudo-characters (see Section~\ref{subsection:The exceptional representations}). We say that $\tau$ is $(\chi,\gamma,\chi',\gamma')$-distinguished if
\begin{align*}
\Hom_{\GL_n}(\theta_{n,\chi,\gamma}\otimes\theta_{n,\chi',\gamma'},\tau^{\vee})\ne0.
\end{align*}
Here $\theta_{n,\chi,\gamma}\otimes\theta_{n,\chi',\gamma'}$ is regarded as a representation of $\GL_n$, as explained in Section~\ref{subsection:representations}.
Equivalently, the space
\begin{align*}
\Tri_{\GL_n}(\tau,\theta_{n,\chi,\gamma},\theta_{n,\chi',\gamma'})
\end{align*}
of $\GL_n$-invariant trilinear forms on $\tau\times\theta_{n,\chi,\gamma}\times\theta_{n,\chi',\gamma'}$ is nonzero.

If $\tau$ is $(\chi,\gamma,\chi',\gamma')$-distinguished, in particular $\omega_{\tau}^{-1}=\theta_{n,\chi,\gamma}\theta_{n,\chi',\gamma'}$ on $Z_n^{\mathe}$, and according to \eqref{eq:exceptionl character},
\begin{align}\label{eq:distinguished centeral characters condition}
\omega_{\tau}^{-1}(z I_n)=\chi(z^n)\chi'(z^n)\gamma(z)\gamma'(z),\qquad \forall z\in F^{*\mathe}.
\end{align}

The next claim implies that the appearance of the pseudo-characters in the definition is redundant and furthermore, if we fix $\gamma$, $\gamma'$ will be determined by \eqref{eq:distinguished centeral characters condition}.
\begin{claim}\label{claim:changing the character psi}
Assume that $\tau$ is $(\chi,\gamma_0,\chi',\gamma_0')$-distinguished. Then for any $\gamma$, Equality~\eqref{eq:distinguished centeral characters condition} determines
a set of pseudo-characters $\gamma'$ such that $\tau$ is $(\chi,\gamma,\chi',\gamma')$-distinguished.
\end{claim}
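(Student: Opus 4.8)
The plan is to prove the claim by exhibiting, for every pseudo-character $\gamma'$ admissible for the new $\gamma$, an isomorphism of $\GL_n$-modules $\theta_{n,\chi,\gamma}\otimes\theta_{n,\chi',\gamma'}\cong\theta_{n,\chi,\gamma_0}\otimes\theta_{n,\chi',\gamma_0'}$, so that the defining Hom space is left unchanged. The key elementary observation is that any two pseudo-characters attached to a fixed $n$ differ by a quadratic character of $F^*$: if $\gamma,\gamma_0$ are pseudo-characters, then $\mu:=\gamma\gamma_0^{-1}$ satisfies $\mu(xy)=\mu(x)\mu(y)$ (the factors $(x,y)^{\lfloor n/2\rfloor}$ cancel) and $\mu(x^2)=1$, so $\mu^2=1$. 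I would then split according to the parity of $n$.

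When $n$ is even, $\mathe=2$, and in the defining formula \eqref{eq:exceptionl character} the pseudo-character is evaluated only on $F^{*\mathe}=F^{*2}$, where every pseudo-character is trivial; hence $\xi_{\chi,\gamma}$, and therefore $\theta_{n,\chi,\gamma}$, is independent of $\gamma$, and likewise $\theta_{n,\chi',\gamma'}$ is independent of $\gamma'$. In this case \eqref{eq:distinguished centeral characters condition} is the pseudo-character-free relation $\omega_\tau^{-1}(zI_n)=\chi(z^n)\chi'(z^n)$ on $F^{*2}$, which holds by the hypothesis that $\tau$ is $(\chi,\gamma_0,\chi',\gamma_0')$-distinguished; so the set of admissible $\gamma'$ is the set of all pseudo-characters, and $\theta_{n,\chi,\gamma}\otimes\theta_{n,\chi',\gamma'}=\theta_{n,\chi,\gamma_0}\otimes\theta_{n,\chi',\gamma_0'}$ outright.

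When $n$ is odd, $\mathe=1$. First I would establish the twisting identity $\theta_{n,\lambda,\delta\mu}=(\mu\circ\det)\,\theta_{n,\lambda,\delta}$ for any character $\lambda$ of $F^*$, any pseudo-character $\delta$, and $\mu$ quadratic: comparing $\xi_{\lambda,\delta\mu}$ and $\xi_{\lambda\mu,\delta}$ as characters of the center $\widetilde{T}_n^2\widetilde{Z}_n$ of $\widetilde{T}_n$ via \eqref{eq:exceptionl character}, the quotient at $\zeta\mathfrak{s}(zI_n)\mathfrak{s}(t)$ equals $\mu(z)^{1-n}\mu(\det t)^{-1}=1$, since $1-n$ is even, $\mu^2=1$, and $\det t\in F^{*2}$; hence $\xi_{\lambda,\delta\mu}=\xi_{\lambda\mu,\delta}$, and since the exceptional representation depends only on the exceptional character, $\theta_{n,\lambda,\delta\mu}=\theta_{n,\lambda\mu,\delta}=(\mu\circ\det)\theta_{n,\lambda,\delta}$, the last equality being the relation $\chi\,\theta_{n,1,\gamma}=\theta_{n,\chi,\gamma}$ from Section~\ref{subsection:The exceptional representations}. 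Next, using that \eqref{eq:distinguished centeral characters condition} holds for $(\gamma_0,\gamma_0')$ and that for $n$ odd it pins down $\gamma'$ as a function on all of $F^*$, I would check that $\gamma':=\gamma_0'\mu$ is the unique admissible pseudo-character (it is a pseudo-character because $\gamma_0'$ is and $\mu^2=1$, and $\gamma\gamma'=\gamma_0\gamma_0'\mu^2=\gamma_0\gamma_0'$). Finally, applying the twisting identity to each factor, $\theta_{n,\chi,\gamma}\otimes\theta_{n,\chi',\gamma'}=\bigl((\mu\circ\det)\theta_{n,\chi,\gamma_0}\bigr)\otimes\bigl((\mu\circ\det)\theta_{n,\chi',\gamma_0'}\bigr)$, and as a $\GL_n$-module through a section the two twists combine into $g\mapsto\mu(\det g)^2=1$; thus this module is isomorphic to $\theta_{n,\chi,\gamma_0}\otimes\theta_{n,\chi',\gamma_0'}$, whence $\Hom_{\GL_n}(\theta_{n,\chi,\gamma}\otimes\theta_{n,\chi',\gamma'},\tau^{\vee})\ne0$.

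The argument is computational rather than conceptual and I do not expect a genuine obstacle; the only points needing care are the bookkeeping of the center of $\widetilde{T}_n$ and the use of \eqref{eq:exceptionl character} in the twisting identity, together with the clean separation of the even case (where the pseudo-characters simply disappear from $\xi_{\chi,\gamma}$) from the odd case (where they persist but are self-cancelling across the two tensor factors, precisely because $\mu$ is quadratic).
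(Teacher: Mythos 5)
Your proposal is correct and follows essentially the same route as the paper: the even case is trivial since pseudo-characters are only evaluated on $F^{*2}$, and in the odd case you set $\mu=\gamma/\gamma_0$ (the paper's $\eta$), take $\gamma'=\mu\gamma_0'$, and use the twisting identity $\theta_{n,\chi,\mu\gamma_0}=\mu\,\theta_{n,\chi,\gamma_0}$ (valid because $\mu=\mu^n$ for odd $n$), so the two twists cancel in the tensor product since $\mu^2=1$. Your spelled-out verification of the twisting identity via the exceptional characters in \eqref{eq:exceptionl character} is just a more detailed version of the paper's one-line justification.
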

\begin{proof}[Proof of Claim~\ref{claim:changing the character psi}]
This is trivial when $n$ is even. In the odd case let $\gamma_0$ be given. Then $\eta=\gamma/\gamma_0$ is a square trivial character of $F^*$. Define $\gamma'=\eta\gamma_0'$, it is a pseudo-character. Then since $\eta(z)=\eta^{n}(z)$ ($n$ is odd),
\begin{align*}
\theta_{n,\chi,\gamma}=\theta_{n,\chi,\eta\gamma_0}=\theta_{n,\eta\chi,\gamma_0}=\eta\theta_{n,\chi,\gamma_0}
\end{align*}
and
\begin{align*}
\theta_{n,\chi,\gamma}\otimes\theta_{n,\chi',\gamma'}=
\eta^2\theta_{n,\chi,\gamma_0}\otimes\theta_{n,\chi',\gamma_0'}=
\theta_{n,\chi,\gamma_0}\otimes\theta_{n,\chi',\gamma_0'}.
\end{align*}
This implies that $\tau$ is $(\chi,\gamma,\chi',\gamma')$-distinguished.
\end{proof}

Additionally, because $\theta_{n,\chi,\gamma}=\chi\theta_{n,1,\gamma}$, we see that $\tau$ is $(\chi,\gamma,\chi',\gamma')$-distinguished if and only if $\tau_0=\chi\chi'\cdot\tau$ is $(1,\gamma,1,\gamma')$-distinguished. Thus we can simply take $\chi=\chi'=1$.

In light of the observations above, we say that
$\tau$ is distinguished if it is $(1,\gamma,1,\gamma')$-distinguished for some $\gamma$ and $\gamma'$.

As an example consider the minimal case of $n=1$.
\begin{claim}\label{claim:distinguished for n=1}
For $n=1$, $\tau$ is distinguished if and only if $\tau^2=1$.
\end{claim}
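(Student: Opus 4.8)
The plan is to reduce the statement for $n=1$ to an explicit description of the exceptional representations $\theta_{1,1,\gamma}$ of $\widetilde{\GL}_1$ and a direct computation of the $\GL_1$-invariant trilinear forms. First I would observe that for $n=1$ we have $\mathe=1$, so $\widetilde{Z}_1=\widetilde{\GL}_1$ is the center of the cover, and every genuine irreducible representation of $\widetilde{\GL}_1$ is simply a genuine character. The exceptional condition on $\xi$ is vacuous when $n=1$ (there are no indices $1\leq i\leq n-1$), so by \eqref{eq:exceptionl character} the exceptional representation $\theta_{1,1,\gamma}$ is the genuine character $\zeta\mathfrak{s}(z)\mapsto\zeta\gamma(z)$ of $\widetilde{\GL}_1$, where $\gamma$ is a pseudo-character, i.e.\ $\gamma(xy)=\gamma(x)\gamma(y)(x,y)^{0}=\gamma(x)\gamma(y)$ and $\gamma(x^2)=1$; in other words $\gamma$ is an honest quadratic character of $F^*$, and $\delta_{B_1}=1$.

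Next I would compute the action of $\GL_1=F^*$ on $\theta_{1,1,\gamma}\otimes\theta_{1,1,\gamma'}$ through an arbitrary section $\varphi$. Because $\theta_{1,1,\gamma}(\varphi(x))\theta_{1,1,\gamma'}(\varphi(x))$: writing $\varphi(x)=\zeta(x)\mathfrak{s}(x)$ for the cocycle-dependent factor $\zeta(x)\in\mu_2$, the genuine characters contribute $\zeta(x)\gamma(x)$ and $\zeta(x)\gamma'(x)$ respectively, so the product is $\zeta(x)^2\gamma(x)\gamma'(x)=\gamma(x)\gamma'(x)$. Hence as a representation of $\GL_1$, the tensor $\theta_{1,1,\gamma}\otimes\theta_{1,1,\gamma'}$ is just the one-dimensional character $x\mapsto\gamma(x)\gamma'(x)$ of $F^*$, which is a quadratic character (product of two quadratic characters). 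A one-dimensional $\tau$ with central character $\omega_\tau$ is then distinguished iff $\Hom_{F^*}(\gamma\gamma',\tau^{\vee})\neq0$, i.e.\ iff $\tau^{\vee}=\gamma\gamma'$ as characters of $F^*$, i.e.\ $\tau=(\gamma\gamma')^{-1}=\gamma\gamma'$ (using that $\gamma,\gamma'$ are quadratic). Since $\gamma,\gamma'$ range over all quadratic characters of $F^*$, their product $\gamma\gamma'$ ranges exactly over all quadratic characters of $F^*$. Therefore $\tau$ is distinguished iff $\tau$ is a quadratic character, i.e.\ $\tau^2=1$; and note this is forced already by \eqref{eq:distinguished centeral characters condition}, which for $n=1$ reads $\omega_\tau^{-1}(z)=\gamma(z)\gamma'(z)$, a quadratic character.

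The only genuinely delicate point is verifying that $\theta_{1,1,\gamma}$ really is the character described, i.e.\ that the construction (extend the exceptional $\xi$ to a maximal abelian subgroup of $\widetilde{T}_1$, induce to $\widetilde{T}_1$, extend trivially to $\widetilde{B}_1=\widetilde{T}_1$, induce to $\widetilde{\GL}_1$, take the unique irreducible quotient) collapses to the identity when $n=1$: here $T_1=B_1=\GL_1$, $N_1$ is trivial, $\widetilde{T}_1$ is already abelian (it is the full center of itself), so $\rho(\xi)=\xi$ and $\Ind_{\widetilde{B}_1}^{\widetilde{\GL}_1}(\rho(\xi))=\xi$ is already irreducible, whence $\theta_{1,1,\gamma}=\xi_{1,\gamma}$. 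I expect this to be routine given the conventions already set up in Section~\ref{subsection:The exceptional representations}, and the genuinely ``combinatorial'' content of the claim — that the set of achievable characters $\gamma\gamma'$ is precisely the set of quadratic characters — is immediate. This also matches the remark following Conjecture~\ref{conjecture:combinatorial classification}, which asserts exactly this base case.
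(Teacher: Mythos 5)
Your proof is correct and follows essentially the same route as the paper: the ``only if'' direction comes from the central character identity \eqref{eq:distinguished centeral characters condition} together with the fact that for $n=1$ a pseudo-character is just a square-trivial character (so $\gamma\gamma'$ is quadratic), and the ``if'' direction from the observation that, varying $\gamma'$, the products $\gamma\gamma'$ exhaust all square-trivial characters of $F^*$, so $\omega_\tau^{-1}=\gamma\gamma'$ can always be arranged. The extra verification that $\theta_{1,1,\gamma}$ collapses to the genuine character $\zeta\mathfrak{s}(z)\mapsto\zeta\gamma(z)$ is a harmless elaboration the paper leaves implicit.
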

\begin{proof}[Proof of Claim~\ref{claim:distinguished for n=1}]
Assume that $\tau$ is distinguished. Then $\tau^2=1$ follows from \eqref{eq:distinguished centeral characters condition} because $\gamma^2{\gamma'}^2=\gamma^4=1$ ($\gamma'/\gamma$ is a square trivial character hence ${\gamma'}^2=\gamma^2$, $\gamma^4=1$ because
$\gamma(x^4)=\gamma(x^2)\gamma(x^2)(x^2,x^2)$).
Conversely, for a fixed 
$\gamma$, $z\mapsto\gamma(z)\gamma'(z)$ is a character of $F^{*2}\bsl F^*$ and any such character is obtained by varying $\gamma'$. Hence $\omega_{\tau}^{-1}=\gamma\gamma'$ for some $\gamma'$ and $\tau$ is distinguished.
\end{proof}
The proof implies the following observation, for any $n$ (already noted in \cite{Kable} p.~766).
\begin{corollary}\label{corollary:central character of distinguished}
If $\tau$ is distinguished, $\omega_{\tau}^2=1$.
\end{corollary}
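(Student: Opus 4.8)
The statement to prove is Corollary~\ref{corollary:central character of distinguished}: if $\tau$ is distinguished, then $\omega_{\tau}^2=1$.

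\medskip

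The plan is to extract this directly from the central-character constraint \eqref{eq:distinguished centeral characters condition} together with the basic properties of pseudo-characters, exactly as in the proof of Claim~\ref{claim:distinguished for n=1}. Since $\tau$ distinguished means $\tau$ is $(1,\gamma,1,\gamma')$-distinguished for some pair of pseudo-characters $\gamma,\gamma'$, relation \eqref{eq:distinguished centeral characters condition} (with $\chi=\chi'=1$) gives $\omega_{\tau}^{-1}(zI_n)=\gamma(z)\gamma'(z)$ for all $z\in F^{*\mathe}$. First I would observe that $\gamma'/\gamma$ is a square-trivial character of $F^*$: indeed both $\gamma$ and $\gamma'$ satisfy $\gamma(xy)=\gamma(x)\gamma(y)(x,y)^{\lfloor n/2\rfloor}$, so their quotient is an honest character, and it is square trivial because $\gamma(x^2)={\gamma'}(x^2)=1$. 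Hence ${\gamma'}^2=\gamma^2$, and therefore $(\gamma\gamma')^2=\gamma^4$. Next I would note $\gamma^4=1$: from $\gamma(uv)=\gamma(u)\gamma(v)(u,v)^{\lfloor n/2\rfloor}$ applied with $u=v=x^2$ one gets $\gamma(x^4)=\gamma(x^2)^2(x^2,x^2)^{\lfloor n/2\rfloor}=1$, since $\gamma(x^2)=1$ and $(x^2,x^2)=1$. Combining, $\omega_{\tau}^{-2}(zI_n)=(\gamma\gamma')^2(z)=1$ for all $z\in F^{*\mathe}$, and since $\omega_\tau$ is a character of the central torus (determined by its value on scalar matrices $zI_n$), this yields $\omega_{\tau}^2=1$.

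\medskip

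There is essentially no obstacle here: the only mild subtlety is the distinction between odd and even $n$, i.e.\ whether $\mathe=1$ or $\mathe=2$, but in either case $\omega_{\tau}$ is a genuine character of a group whose elements are scalar matrices $zI_n$ with $z\in F^{*\mathe}$, and the computation above shows $\omega_{\tau}^2$ is trivial on all such $z$; since $\omega_\tau$ (being a character of $\GL_n$'s center, which consists exactly of the scalars) is entirely determined by these values, $\omega_{\tau}^2=1$ as a character of $Z_n$. Thus the corollary is an immediate consequence of the computation already carried out in the proof of Claim~\ref{claim:distinguished for n=1}, now read off for arbitrary $n$ rather than $n=1$, and I would phrase the proof as a two-line pointer to that argument: the relation $\gamma^2{\gamma'}^2=\gamma^4=1$ established there holds verbatim for every $n$, and together with \eqref{eq:distinguished centeral characters condition} it forces $\omega_{\tau}^2=1$.
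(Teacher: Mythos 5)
Your route via the chain $\gamma'^2=\gamma^2$, $\gamma^4=1$, $(\gamma\gamma')^2=1$ is correct and matches the paper's reasoning for \emph{odd} $n$ (where $\mathe=1$ and \eqref{eq:distinguished centeral characters condition} constrains $\omega_\tau$ on all of $F^*$). But there is a genuine gap in the final step for \emph{even} $n$. When $\mathe=2$, the identity $\omega_\tau^{-2}(z)=(\gamma\gamma')^2(z)=1$ you derive is asserted only for $z\in F^{*2}$. The last sentence of your proof claims that $\omega_\tau$ is ``a character of a group whose elements are scalar matrices $zI_n$ with $z\in F^{*\mathe}$'' and hence is ``entirely determined by these values'' — this is false for even $n$: the center $Z_n$ is $\{zI_n: z\in F^*\}$, not $\{zI_n:z\in F^{*2}\}$, and a character of $F^*$ is certainly not determined by its restriction to $F^{*2}$. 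Knowing $\omega_\tau^2|_{F^{*2}}=1$ only yields $\omega_\tau^4=1$ (any quadratic character of $F^*$ has square trivial on $F^{*2}$), which is weaker than the corollary.

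The repair is short, and in fact gives a cleaner argument that treats both parities uniformly: apply \eqref{eq:distinguished centeral characters condition} directly (unsquared) at $z=x^2\in F^{*2}\subset F^{*\mathe}$. Since pseudo-characters satisfy $\gamma(x^2)=\gamma'(x^2)=1$ by definition, one gets $\omega_\tau^{-1}(x^2)=1$ for all $x\in F^*$; because $\omega_\tau$ is a genuine character of $Z_n\cong F^*$, this reads $\omega_\tau(x)^2=\omega_\tau(x^2)=1$, i.e.\ $\omega_\tau^2=1$. This bypasses the $\gamma^4=1$ computation entirely and avoids the issue with $F^{*\mathe}$ versus $F^*$. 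As written, though, your proposal only establishes $\omega_\tau^4=1$ in the even case, so the step ``$\omega_\tau^2$ trivial on $F^{*\mathe}$ implies $\omega_\tau^2=1$'' needs to be replaced by the observation above.
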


\subsection{Heredity}\label{subsection:heredity}
We now prove the upper heredity of distinguished representations. The proof is based on the following observation: given an invariant form on a representation $\xi$ of a Levi subgroup, one can construct an invariant form on a representation parabolically induced from $\xi$ (assuming a certain condition on modulus characters). One complication here, is that we do not have invariancy with respect to a full Levi subgroup. We compensate for this using the properties of the metaplectic tensor product (see Section~\ref{subsection:The metaplectic tensor}).
\begin{proof}[Proof of Theorem~\ref{theorem:upper heredity of dist}]
Let $\tau_1$ and $\tau_2$ be a pair of distinguished representations of $\GL_{n_1}$ and $\GL_{n_2}$.
We have to prove that $\tau=\Ind_{Q_{n_1,n_2}}^{\GL_n}(\tau_1\otimes\tau_2)$ is distinguished, with $n=n_1+n_2$.

We may assume that either $n_1$ and $n_2$ have the same parity, or $n_2$ is even (if $n_1$ is even and $n_2$ is odd, the argument will be repeated with their roles replaced).
Let $Q_{n_1,n_2}^{\star}=M_{n_1,n_2}^{\star}\ltimes U_{n_1,n_2}$, where
\begin{align*}
M_{n_1,n_2}^{\star}=\GL_{n_1}\times\GL_{n_2}^{\square}.
\end{align*}
The assumptions on $\tau_i$ imply that for suitable pairs of pseudo-characters $(\gamma_i,\gamma_i')$, $i=1,2$,
\begin{align*}
\Tri_{M_{n_1,n_2}^{\star}}(\tau_1\otimes\tau_2,\theta_{n_1,1,\gamma_1}\otimes(\theta_{n_2,1,\gamma_2})^{\square},\theta_{n_1,1,\gamma_1'}\otimes(\theta_{n_2,1,\gamma_2'})^{\square})\ne0.
\end{align*}

Let $\sigma$ (resp. $\sigma'$) be an irreducible summand of $\theta_{n_2,1,\gamma_2}^{\square}$
(resp. $\theta_{n_2,1,\gamma_2'}^{\square}$), such that
\begin{align}\label{space:heredity proof}
\Tri_{M_{n_1,n_2}^{\star}}(\tau_1\otimes\tau_2,\theta_{n_1,1,\gamma_1}\otimes\sigma,\theta_{n_1,1,\gamma_1'}\otimes\sigma')\ne0.
\end{align}
If $n_2$ is odd $\sigma=\theta_{n_2,1,\gamma_2}^{\square}$ and $\sigma'=\theta_{n_2,1,\gamma_2'}^{\square}$ (\cite{Kable} Proposition~3.1).

According to \eqref{eq:refinement for tensor}, there exists a pseudo-character $\gamma$ with
\begin{align}\label{eq:heredity tensor and induced}
&\theta_{n_1,1,\gamma_1}\widetilde{\otimes}_{\gamma}\theta_{n_2,1,\gamma_2}=\ind_{\widetilde{M}_{n_1,n_2}^{\star}}^{\widetilde{M}_{n_1,n_2}}(\theta_{n_1,1,\gamma_1}\otimes \sigma).
\end{align}
Indeed, this is clear if $n_1$ and $n_2$ have the same parity (e.g., if both are even, any summand $\sigma$ is suitable for any $\gamma$).
Regarding the last case of odd $n_1$ and even $n_2$, first note that $\sigma$ admits a character on $\widetilde{Z}_{n_2}$
($\widetilde{Z}_{n_2}$ is abelian and
central in $\widetilde{\GL}_{n_2}^{\square}$), which is trivial on $\mathfrak{s}(Z_{n_2}^2)$ (because $\omega_{\theta_{n_2,1,\gamma_2}}(\mathfrak{s}(Z_{n_2}^2))=1$).
Hence the mapping $\gamma_3(z)=\sigma(\mathfrak{s}(z))$ is a pseudo-character. Therefore, in this case $\gamma$ is determined by the condition
$\gamma=\gamma_1\gamma_3$ (this is a pseudo-character because here $\lfloor n_1/2\rfloor+\lfloor n_2/2\rfloor=\lfloor n/2\rfloor$).

Similarly, for some $\gamma'$,
\begin{align*}
&\theta_{n_1,1,\gamma_1'}\widetilde{\otimes}_{\gamma'}\theta_{n_2,1,\gamma_2'}=\ind_{\widetilde{M}_{n_1,n_2}^{\star}}^{\widetilde{M}_{n_1,n_2}}(\theta_{n_1,1,\gamma_1'}\otimes \sigma').
\end{align*}

Applying Frobenius reciprocity to \eqref{eq:result of Kable for Jacquet module of exceptional} and using \eqref{eq:heredity tensor and induced}, we see that $\theta_{n,1,\gamma}$ is a subrepresentation of
\begin{align}\label{eq:tensor in heredity proof 0}
\Ind_{\widetilde{Q}_{n_1,n_2}}^{\widetilde{\GL}_n}(\delta_{Q_{n_1,n_2}}^{-1/4}\ind_{\widetilde{M}_{n_1,n_2}^{\star}}^{\widetilde{M}_{n_1,n_2}}(\theta_{n_1,1,\gamma_1}\otimes \sigma)),
\end{align}
which is isomorphic to
\begin{align}\label{eq:tensor in heredity proof}
\ind_{\widetilde{Q}_{n_1,n_2}^{\star}}^{\widetilde{\GL}_n}(\delta_{Q_{n_1,n_2}}^{1/4}(\theta_{n_1,1,\gamma_1}\otimes\sigma)).
\end{align}
Note that the induction from $\widetilde{Q}_{n_1,n_2}^{\star}$ is not normalized. The isomorphism is given by
$\varphi_0\mapsto\varphi$ where $\varphi(g)=\varphi_0(g)(1)$ ($\varphi_0$ in \eqref{eq:tensor in heredity proof 0}).

Regard an element $\varphi$ in the space of $\theta_{n,1,\gamma}$ as a function in \eqref{eq:tensor in heredity proof}.
Specifically, $\varphi$ is a function on $\widetilde{\GL}_n$, compactly supported modulo $\widetilde{Q}_{n_1,n_2}^{\star}$, taking values in the space of $\theta_{n_1,1,\gamma_1}\otimes \sigma$, and satisfying for
$m\in\widetilde{M}_{n_1,n_2}^{\star}$, $u\in U_{n_1,n_2}$ and $g\in\widetilde{\GL}_n$,
\begin{align*}
\varphi(mug)=\delta_{Q_{n_1,n_2}}^{1/4}(m)(\theta_{n_1,1,\gamma_1}\otimes\sigma)(m)\varphi(g).
\end{align*}
Similar properties holds for $\varphi'$ in the space of $\theta_{n,1,\gamma'}$.

Let $L\ne0$ belong to \eqref{space:heredity proof} and take $f$ in the space of $\tau$. Then
for $q\in Q_{n_1,n_2}^{\star}$,
\begin{align*}
L(f(q),\varphi(q),\varphi'(q))=\delta_{Q_{n_1,n_2}}(q)L(f(1),\varphi(1),\varphi'(1)).
\end{align*}
Hence the following integral is (formally) well defined (see e.g. \cite{BZ1} 1.21),
\begin{align*}
T(f,\varphi,\varphi')=\int_{Q_{n_1,n_2}^{\star}\bsl\GL_n}L(f(g),\varphi(g),\varphi'(g))\ dg.
\end{align*}
It is absolutely convergent because according to the Iwasawa decomposition, it is equal to
\begin{align}\label{int:Iwasawa}
\int_{K}\int_{Q_{n_1,n_2}^{\star}\bsl Q_{n_1,n_2}}L(f(qk),\varphi(qk),\varphi'(qk))\delta_{Q_{n_1,n_2}}^{-1}(q)\ d_rq\ dk,
\end{align}
where $K$ is the hyperspecial compact open subgroup of $\GL_n$.

Since $T\in\Tri_{\GL_n}(\tau,\theta_{n,1,\gamma},\theta_{n,1,\gamma'})$, it is left to show $T\ne0$. Assume $L(x,y,y')\ne0$ for corresponding data $x,y$ and $y'$.
Take $f$ supported on $Q_{n_1,n_2}\mathcal{V}$, where $\mathcal{V}$ is a small compact open neighborhood of the identity in $\GL_{n}$,
and
\begin{align}
f(muv)=\delta_{Q_{n_1,n_2}}^{1/2}(m)(\tau_1\otimes\tau_2)(m)x,\qquad\forall m\in M_{n_1,n_2}, u\in U_{n_1,n_2}, v\in\mathcal{V}.
\end{align}

Next we show that there is $\varphi$ in the space of $\theta_{n,1,\gamma}$ with
\begin{align}\label{eq:heredity requirement from varphi}
\varphi(m)=\begin{dcases}\delta_{Q_{n_1,n_2}}^{1/4}(m)(\theta_{n_1,1,\gamma_1}\otimes\sigma)(m)y&m\in \widetilde{M}_{n_1,n_2}^{\star},\\
0&m\in\widetilde{M}_{n_1,n_2}-\widetilde{M}_{n_1,n_2}^{\star}.
\end{dcases}
\end{align}
Indeed, take $\varphi_0$ in the space of $\theta_{n,1,\gamma}$  as a subrepresentation of \eqref{eq:tensor in heredity proof 0},
such that $\varphi_0(1)\ne0$. Let $\beta_y$ be the function in the space of $\ind_{\widetilde{M}_{n_1,n_2}^{\star}}^{\widetilde{M}_{n_1,n_2}}(\theta_{n_1,1,\gamma_1}\otimes\sigma)$, which vanishes
on $\widetilde{M}_{n_1,n_2}-\widetilde{M}_{n_1,n_2}^{\star}$ and for $m\in\widetilde{M}_{n_1,n_2}^{\star}$,
$\beta_y(m)=(\theta_{n_1,1,\gamma_1}\otimes\sigma)(m)y$. Because $\theta_{n_1,1,\gamma_1}$ and $\theta_{n_2,1,\gamma_2}$ are irreducible, so is their metaplectic tensor,
whence by \eqref{eq:heredity tensor and induced}, $\ind_{\widetilde{M}_{n_1,n_2}^{\star}}^{\widetilde{M}_{n_1,n_2}}(\theta_{n_1,1,\gamma_1}\otimes\sigma)$ is
irreducible. Hence there is $m_0\in M_{n_1,n_2}$ such that $\varphi_0(m_0)=\beta_y$.
Let $\varphi$ be the image in \eqref{eq:tensor in heredity proof} of $m_0\varphi_0$. Then $\varphi$ satisfies \eqref{eq:heredity requirement from varphi}.

The same argument applies to $\theta_{n,1,\gamma'}$ and denote the corresponding function by $\varphi'$, it satisfies
\eqref{eq:heredity requirement from varphi} with respect to $y'$ (and $\theta_{n_1,1,\gamma_1'}\otimes\sigma'$).

Using \eqref{int:Iwasawa} and $Q_{n_1,n_2}\mathcal{V}\cap K=(Q_{n_1,n_2}\cap K)\mathcal{V}$,
\begin{align*}
T(f,\varphi,\varphi')=
\int_{(Q_{n_1,n_2}\cap K)\mathcal{V}}\int_{Q_{n_1,n_2}^{\star}\bsl Q_{n_1,n_2}}L(f(qk),\varphi(qk),\varphi'(qk))\delta_{Q_{n_1,n_2}}^{-1}(q)\ d_rq\ dk.
\end{align*}
For a sufficiently small $\mathcal{V}$
(with respect to $\varphi$ and $\varphi'$), the $dk$-integration may be ignored. Using the fact that
$Q_{n_1,n_2}^{\star}\bsl Q_{n_1,n_2}=M_{n_1,n_2}^{\star}\bsl M_{n_1,n_2}$ is finite, we obtain (up to a nonzero measure constant)
\begin{align*}
\sum_{m\in M_{n_1,n_2}^{\star}\bsl M_{n_1,n_2}}L(f(m),\varphi(m),\varphi'(m))\delta_{Q_{n_1,n_2}}^{-1}(m).
\end{align*}
 According to \eqref{eq:heredity requirement from varphi} this equals $L(x,y,y')$ which is nonzero.
We conclude that $\tau$ is distinguished. 
\end{proof}

\subsection{Combinatorial characterization}\label{subsection:only if part}
We characterize distinguished principal series representations in terms of their inducing data. We start with the case of $\GL_2$, which in fact can be reproduced from the results of Savin \cite{Savin3} for $\GL_3$. The argument is provided, for completeness and because this case will be used as a base case in the course of proving Theorem~\ref{theorem:distinguished principal series}.
\begin{claim}\label{claim:distinguished principal series n=2}
Let $\tau$ be a principal series representation of $\GL_2$, induced from the character
$\eta_1\otimes\eta_2$ of $T_2$. Then
$\tau$ is distinguished if and only if $\eta_1^2=\eta_2^2=1$ or $\eta_2=\eta_1^{-1}$.
\end{claim}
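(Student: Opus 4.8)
The plan is to analyze the space $\Hom_{\GL_2}(\theta_{2,1,\gamma}\otimes\theta_{2,1,\gamma'},\tau^\vee)$ by restricting the $\GL_2$-module $\theta_{2,1,\gamma}\otimes\theta_{2,1,\gamma'}$ to the mirabolic subgroup $Y_2$ and using the structure of exceptional representations of $\widetilde{\GL}_2$. For $\GL_2$ the mirabolic $Y_2$ is just $\GL_1\ltimes U_{1,1}$, and $\theta=\theta_{2,1,\gamma}$ has a particularly simple structure: as a $Y_2$-module it is glued from $\Phi^+\Psi^+$ of its second derivative (which is a one-dimensional representation of $\GL_0$, essentially a twist computed via \eqref{eq:result of Kable for Jacquet module of exceptional} and the metaplectic tensor $\theta_{1,1,\gamma_1}\widetilde\otimes_\gamma\theta_{1,1,\gamma_2}$) and $\Phi^+$ of its first derivative, the latter being $j_{N_2}(\theta)=\xi_{1,\gamma}$ up to normalization. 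Concretely, restricted to $Y_2$, $\theta$ sits in a short exact sequence with a subrepresentation $\ind_{U_{1,1}}^{Y_2}(\psi)$ (the ``Kirillov-type'' piece) and quotient the character $\delta_{B_2}^{1/4}\eta_\theta$ of $Y_2/U_{1,1}\cong F^*$ for a suitable $\eta_\theta$ with $\eta_\theta^2=1$.

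Next I would tensor the two filtrations of $\theta_{2,1,\gamma}$ and $\theta_{2,1,\gamma'}$ as $Y_2$-modules and obtain a four-step filtration of $\theta_{2,1,\gamma}\otimes\theta_{2,1,\gamma'}$, using Lemma~\ref{lemma:composition series of Jacquet of tensor}-type bookkeeping: the ``big $\times$ big'' piece $\ind_{U_{1,1}}^{Y_2}(\psi)\otimes\ind_{U_{1,1}}^{Y_2}(\psi^{-1})$, the two mixed pieces, and the ``small $\times$ small'' piece, a character of $F^*$. The mixed pieces are induced from $U_{1,1}$ with a nontrivial additive character, hence are of the form $\ind_{U_{1,1}}^{Y_2}(\psi)\otimes(\text{character})$; by Frobenius reciprocity $\Hom_{Y_2}$ of such a module into any $Y_2$-module is controlled by a twisted Jacquet module, and since $\tau^\vee$ is principal series its twisted Jacquet module along $\psi$ is one-dimensional (uniqueness of Whittaker model for $\GL_2$ principal series). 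The key computation is then to determine, for each graded piece, the constraint it imposes on $\eta=\eta_1\otimes\eta_2$ via the central/torus characters, and to check whether a genuine $\GL_2$-invariant (not merely $Y_2$-invariant) functional survives. This is where the condition $\eta_1^2=\eta_2^2=1$ versus $\eta_2=\eta_1^{-1}$ emerges: the first comes from the matching of the two ``small'' characters $\eta_\theta,\eta_{\theta'}$ (each square-trivial) against $\omega_\tau$ together with \eqref{eq:distinguished centeral characters condition}, and the second from matching a ``small'' piece of one $\theta$ against a ``big'' (generic) piece of the other.

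The main obstacle I expect is the passage from $Y_2$-invariance to $\GL_2$-invariance: the mirabolic analysis produces candidate functionals, but one must rule out or confirm extension to the full group, and in the metaplectic setting the center $\widetilde Z_2$ is not central in $\widetilde\GL_2$, so $\theta$ does not admit a central character and the usual central-character bookkeeping must be done on $\widetilde Z_2^2$ only, then completed using Corollary~\ref{corollary:central character of distinguished} and \eqref{eq:distinguished centeral characters condition}. For the converse (the ``if'' direction) I would, in the case $\eta_2=\eta_1^{-1}$, exhibit the trilinear form directly: $\tau$ is then (a quotient of) $\Ind(\eta_1\otimes\eta_1^{-1})$, i.e. built from $\GL_1$-data $\eta_1$ and $\eta_1^{-1}$ with $\Delta_2=\Delta_1^\vee$, and Theorem~\ref{theorem:upper heredity of dist} together with Claim~\ref{claim:distinguished for n=1} (a $\GL_1$-representation $\mu$ is distinguished iff $\mu^2=1$; and $\mu\otimes\mu^{-1}$ is always ``paired'') gives distinction after checking irreducibility. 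In the case $\eta_1^2=\eta_2^2=1$, each $\GL_1$-character $\eta_i$ is distinguished, so $\tau=\Ind(\eta_1\otimes\eta_2)$ is distinguished by Theorem~\ref{theorem:upper heredity of dist}. Thus the converse reduces entirely to heredity plus the $n=1$ base case; only the ``only if'' direction requires the mirabolic filtration argument sketched above.
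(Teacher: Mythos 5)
Your sketch for the ``only if'' direction is broadly aligned with what the paper does (the paper applies Frobenius reciprocity to reduce to a $\Hom_{T_2}$ of the $N_2$-Jacquet module of $\theta\otimes\theta'$, then uses Lemma~\ref{lemma:composition series of Jacquet of tensor} to split into the ``small\,$\times$\,small'' and ``big\,$\times$\,big'' pieces --- the mixed pieces vanish, as the lemma shows, so the condition $\eta_2=\eta_1^{-1}$ comes from matching the two big pieces, not from matching a small piece of one against a big piece of the other as you write). But the serious gap is in the ``if'' direction.

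You propose to deduce distinction in the case $\eta_2=\eta_1^{-1}$ from Theorem~\ref{theorem:upper heredity of dist} together with Claim~\ref{claim:distinguished for n=1}. This does not work. Upper heredity requires \emph{both} $\tau_1$ and $\tau_2$ to be distinguished; for $n_1=n_2=1$, Claim~\ref{claim:distinguished for n=1} says a character $\mu$ of $F^*$ is distinguished if and only if $\mu^2=1$. So if $\eta_1^2\ne1$ (the new content of this case, since $\eta_1^2=1$ together with $\eta_2=\eta_1^{-1}$ already falls under the first alternative), neither $\eta_1$ nor $\eta_1^{-1}$ is a distinguished $\GL_1$-representation and Theorem~\ref{theorem:upper heredity of dist} gives nothing. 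Appealing to the ``$\Delta_2=\Delta_1^\vee$'' pattern is either circular (it is exactly what Claim~\ref{claim:distinguished principal series n=2} is establishing as the base case of Theorem~\ref{theorem:distinguished principal series}) or it rests on Conjecture~\ref{conjecture:combinatorial classification}, which is unproved. The paper fills this hole with a genuinely different input: the explicit Schwartz--Bruhat realization of $\theta_{2,1,\gamma_\psi}$ on the space $C_2^{1/4}$ from \cite{Savin3,FKS,Flicker}, the identification of $(\theta(N_2)\otimes\theta'(N_2))_{N_2}$ with $\mathcal{S}(F^*)$ inside $C_2^{1/2}$, and Savin's Proposition~4 (\cite{Savin3}), which says the $T_2$-quasi-invariant functional $f\mapsto\int_{F^*}f(x)\mu(x^{-1})\,d^*x$ on $\mathcal{S}(F^*)$ extends to $C_2$ precisely when $\mu^2\ne1$. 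That analytic extension step is what produces the nonzero element of $\Hom_{T_2}((\theta\otimes\theta')_{N_2},\delta_{B_2}^{1/2}(\eta_1^{-1}\otimes\eta_1))$, and it has no counterpart in your argument. You would need to supply some such construction; without it, the ``if'' direction is incomplete for exactly the case that is not covered by heredity.
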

\begin{proof}[Proof of Claim~\ref{claim:distinguished principal series n=2}]
Put $\theta=\theta_{2,1,\gamma_{\psi}}$ and $\theta'=\theta_{2,1,\gamma_{\psi^{-1}}}$.
By the Frobenius reciprocity,
\begin{align}\label{eq:distinguished characterization n=2}
\Hom_{\GL_2}(\theta\otimes\theta',\tau^{\vee})=
\Hom_{T_2}((\theta\otimes\theta')_{N_2},\delta_{B_2}^{1/2}(\eta_1^{-1}\otimes\eta_2^{-1})).
\end{align}
Assume that $\eta_1\otimes\eta_2$ is of the prescribed form, we prove that $\tau$ is distinguished. If $\eta_1^2=\eta_2^2=1$, then $\eta_1$ and $\eta_2$ are distinguished
and the result follows from Theorem~\ref{theorem:upper heredity of dist}. Now assume $\eta_2=\eta_1^{-1}$ and $\eta_1^2\ne1$.

We use the geometric realization of $\theta$ given in \cite{Savin3,FKS,Flicker}. Let $\mathcal{S}(F)$ be the space of Schwartz-Bruhat functions on $F$. Let $C_2\subset\mathcal{S}(F)$ be the subspace of functions
$f$, for which there is a constant $A_f>0$ satisfying $f(x)=0$ for $|x|>A_f$ and $f(y^2x)=f(x)$ for all $x,y$ with $|y^2x|,|x|<A_f^{-1}$. Also for $m\in\QQ$, let $C_{2}^m$ denote the space
of functions $f$ for which $|\cdot|^{m}f\in C_2$. The representation $\theta$ can be realized on $C_2^{1/4}$. In particular, the action of $N_2$ is given by $\left(\begin{smallmatrix}1&u\\&1\end{smallmatrix}\right)\cdot f(x)=\psi(u x)f(x)$ (\cite{Savin3} p.~372).

Following the arguments of Savin \cite{Savin3} (proof of Proposition~6), one sees that $(\theta\otimes\theta')_{N_2}$ is embedded in $C_2^{1/2}$, and under
this embedding $(\theta(N_2)\otimes\theta'(N_2))_{N_2}=\mathcal{S}(F^*)$. If $f\in C_2^{1/2}$ is
the image of an element from $(\theta\otimes\theta')_{N_2}$, the action of $T_2$ is given by
\begin{align*}
diag(a,b)\cdot f(x)=|ab^{-1}|f(xab^{-1})
\end{align*}
(see \cite{Savin3} p.~372 and use $\gamma_{\psi^{-1}}=\gamma_{\psi}^{-1}$).
According to \cite{Savin3} (Proposition~4), for a character $\mu$ of $F^*$ such that $\mu^2\ne1$, the nontrivial functional on $\mathcal{S}(F^*)$ given by
 \begin{align*}
 f\mapsto\int_{F^*}f(x)\mu(x^{-1})d^*x
 \end{align*}
 extends to $C_2$. Since for $f\in C_2^{1/2}$, $|\cdot|^{1/2}f\in C_2$, and $\eta_1^2\ne1$, it follows that
 \begin{align*}
 f\mapsto\int_{F^*}|x|^{1/2}f(x)\eta_1^{-1}(x^{-1})d^*x
 \end{align*}
 defines a functional in
\begin{align*}
\Hom_{T_2}((\theta\otimes\theta')_{N_2},\delta_{B_2}^{1/2}(\eta_1^{-1}\otimes\eta_1)),
\end{align*}
 which is nontrivial because it does not vanish on the subspace $(\theta(N_2)\otimes\theta'(N_2))_{N_2}$. Looking at \eqref{eq:distinguished characterization n=2} we
 see that $\tau$ is distinguished.

In the other direction assume that $\tau$ is distinguished. By virtue of Lemma~\ref{lemma:composition series of Jacquet of tensor},
either
\begin{align*}
\Hom_{T_2}(\theta_{N_2}\otimes\theta'_{N_2},\delta_{B_2}^{1/2}(\eta_1^{-1}\otimes\eta_2^{-1}))\ne0
\end{align*}
or
\begin{align*}
& \Hom_{T_2}((\theta(N_2)\otimes\theta'(N_2))_{N_2},\delta_{B_2}^{1/2}(\eta_1^{-1}\otimes\eta_2^{-1}))\ne0.
\end{align*}

In the former case, by \eqref{eq:result of Kable for Jacquet module of exceptional} we have
\begin{align*}
\Hom_{T_2}((\theta_{1,1,1}\widetilde{\otimes}_{\gamma_{\psi}}\theta_{1,1,1})\otimes
(\theta_{1,1,1}\widetilde{\otimes}_{\gamma_{\psi^{-1}}}\theta_{1,1,1}),\eta_1^{-1}\otimes\eta_2^{-1})\ne0.
\end{align*}
In particular when restricting to $T_2^2$ we obtain (see Section~\ref{subsection:The metaplectic tensor})
\begin{align*}
\Hom_{T_2^2}((\theta_{1,1,1}^{\square}\otimes\theta_{1,1,1}^{\square})\otimes
(\theta_{1,1,1}^{\square}\otimes\theta_{1,1,1}^{\square}),\eta_1^{-1}\otimes\eta_2^{-1})\ne0,
\end{align*}
whence $\eta_1^2=\eta_2^2=1$.

In the latter case
\begin{align*}
\Hom_{T_2}(\mathcal{S}(F^*),\delta_{B_2}^{1/2}(\eta_1^{-1}\otimes\eta_2^{-1}))\ne0.
 \end{align*}
 The action of $T_2$ on $\mathcal{S}(F^*)$ was given above and immediately implies $\eta_1^{-1}=\eta_2$.
 \end{proof}
\begin{remark}\label{remark:Kable's result using analytic methods after n=2}
If $\eta_1$ is unramified and $|2|=1$, the fact that $\Ind_{B_2}^{\GL_2}(\eta_1\otimes\eta_1^{-1})$ is distinguished follows as a particular case from a result of Kable (\cite{Kable2} Theorem~5.4). 
\end{remark}

Let $\eta=\eta_1\otimes\ldots\otimes\eta_n$ be a character of $T_n$. We say that $\eta$ satisfies
condition $(\star)$ if, up to a permutation of the characters $\eta_i$, there is $0\leq k\leq\lfloor n/2\rfloor$ such that
\begin{itemize}[leftmargin=*]
\item $\eta_{2i}=\eta_{2i-1}^{-1}$ for $1\leq i\leq k$,
\item $\eta_i^2=1$ (i.e., $\eta_i$ is distinguished) for $2k+1\leq i\leq n$.
\end{itemize}
Claims~\ref{claim:distinguished for n=1} and \ref{claim:distinguished principal series n=2} state that for $n=1,2$,
the principal series representation induced from $\eta$ is distinguished if and only if $\eta$ satisfies $(\star)$.
The main goal of this work is to extend this to irreducible principal series representations of $\GL_n$, for arbitrary $n$.
Henceforth we proceed under the mild assumption that for $n>3$, an exceptional representation does not have a Whittaker model
(see Section~\ref{subsection:The exceptional representations}).

\begin{remark}\label{remark:usage of |2|=1}
This assumption 
is only needed in Lemma~\ref{lemma:only if direction, odd n or even with restriction} below, used by Proposition~\ref{proposition:only if direction, for any principal series}: in that lemma we use the fact that the second derivative of
an exceptional representation is its highest nonzero derivative (\cite{Kable} Theorems~5.3 and 5.4), this is true if and only if this representation has no Whittaker model.
\end{remark}

\begin{proposition}\label{proposition:only if direction, for any principal series}
Let $\eta$ be a character of $T_n$.
Assume that for some $c\in\{0,1\}^{n-1}$,
\begin{align}\label{eq:assumption main proposition for only if characterization}
\Hom_{T_n}((\mathcal{L}^{n,1}_{c}\theta_{n,1,\gamma}\otimes\mathcal{L}^{n,1}_{c}\theta_{n,1,\gamma'})_{N_n},\delta_{B_n}^{1/2}\eta)\ne0.
\end{align}
Then $\eta$ satisfies $(\star)$.
\end{proposition}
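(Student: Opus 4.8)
The plan is to argue by induction on $n$. The base cases $n\le 2$ are contained in the proofs of Claims~\ref{claim:distinguished for n=1} and~\ref{claim:distinguished principal series n=2}: there the nonvanishing of the relevant $\Hom$ space — for $c=(1)$, respectively $c=(0)$, which are the only choices — is shown to force $\eta_1^2=\eta_2^2=1$ (using restriction to $T_n^2$, the description of $\xi^{\square}$, and \eqref{eq:distinguished centeral characters condition}), respectively $\eta_2=\eta_1^{-1}$ (using the $\mathcal{S}(F^*)$-model), which is exactly $(\star)$. By Claim~\ref{claim:changing the character psi} I may and will assume $\gamma$ is attached to an additive character $\psi$ and $\gamma'$ to $\psi^{-1}$, so that $U_{n-1,1}$ acts through $\psi$ on $\theta_{n,1,\gamma}$ and through $\psi^{-1}$ on $\theta_{n,1,\gamma'}$; in particular the diagonal $U_{n-1,1}$-action on $\theta_{n,1,\gamma}\otimes\theta_{n,1,\gamma'}$ is trivial.

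For $n\ge 3$ I would split according to $c_1$. If $c_1=1$, then $\mathcal{L}^{n,1}_c(\theta_{n,1,\gamma})$ is obtained by applying $\mathcal{L}^{n,2}_{c_2},\dots,\mathcal{L}^{n,n-1}_{c_{n-1}}$ to $(\theta_{n,1,\gamma})_{U_{n-1,1}}=\delta_{Q_{n-1,1}}^{1/4}\,\theta_{n-1,1,\gamma_1}\widetilde{\otimes}_{\gamma}\theta_{1,1,\gamma_2}$ (Kable, \eqref{eq:result of Kable for Jacquet module of exceptional}). Since $U_{n-1,1}$ already acts trivially and, for $m\ge 2$, the abelian group $U_{n-m,m}$ is the product of $U_{n-m,m}\cap U_{n-1,1}$ with a copy of the unipotent radical $U_{n-m,m-1}$ of $\GL_{n-1}$, Kable's identity $j_{U_1U_2}(\pi_1\widetilde{\otimes}_\omega\pi_2)=j_{U_1}(\pi_1)\widetilde{\otimes}_\omega j_{U_2}(\pi_2)$ (the $\GL_1$-factor contributes no unipotent) rewrites $\mathcal{L}^{n,1}_c(\theta_{n,1,\gamma})$ as a $\delta$-twist of $\mathcal{L}^{n-1,1}_{c'}(\theta_{n-1,1,\gamma_1})\widetilde{\otimes}_\gamma\theta_{1,1,\gamma_2}$, with $c'=(c_2,\dots,c_{n-1})$. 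Taking the outer tensor with the $\gamma'$-analogue, then $N_n$-coinvariants (the $U_{n-1,1}$-step being trivial), and finally $\Hom_{T_n}(-,\delta_{B_n}^{1/2}\eta)$ using the decomposition $T_n=T_{n-1}\times Z_n$: the $Z_n$-component is the central-character relation \eqref{eq:distinguished centeral characters condition}, which — evaluating the $\GL_1$-restriction of the two metaplectic tensors via Claim~\ref{claim:Mackey theory applied to metaplectic tensor}, exactly as in the case $n=1$ — forces $\eta_n^2=1$; the $T_{n-1}$-component, after one more application of Kable's identity, becomes $\Hom_{T_{n-1}}\big((\mathcal{L}^{n-1,1}_{c'}\theta_{n-1,1,\gamma_1}\otimes\mathcal{L}^{n-1,1}_{c'}\theta_{n-1,1,\gamma_1'})_{N_{n-1}},\delta_{B_{n-1}}^{1/2}\eta'\big)\ne 0$ with $\eta'=\eta_1\otimes\cdots\otimes\eta_{n-1}$ (summed, if necessary, over the finitely many twists of $\theta_{n-1,1,\gamma_1}$ occurring in the restriction of the metaplectic tensor). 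By the inductive hypothesis each such $\eta'$ satisfies $(\star)$, hence so does $\eta=\eta'\otimes\eta_n$, with the same $k$ and with $\eta_n$ joining the square-trivial part.

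The substantial case is $c_1=0$. Here $\mathcal{L}^{n,1}_c(\theta_{n,1,\gamma})=\mathcal{L}^{n,2}_{(c_2,\dots,c_{n-1})}(\theta_{n,1,\gamma}(U_{n-1,1}))$, and as a $Y_n$-module $\theta_{n,1,\gamma}(U_{n-1,1})$ is the ``$\Phi^+\Psi^+$ of the second derivative'' piece of the Bernstein--Zelevinsky filtration, i.e.\ $\Psi^-\circ\Phi^+=0$ and $\Phi^-\circ\Phi^+\cong\mathrm{id}$ identify it with $\Phi_{\diamond}^{+}\Psi_{\diamond}^{+}(\pi_0)$ for $\pi_0$ a $\GL_{n-2}$-twist of $\theta_{n-2,1,\gamma}$; this uses that the third and higher derivatives of an exceptional representation vanish (Kable~\cite{Kable}, Theorems~5.3 and~5.4), which is exactly where the hypothesis on Whittaker models enters. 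Lemma~\ref{lemma:L of Y_n module} then computes $\mathcal{L}^{n,2}_{(c_2,\dots,c_{n-1})}\Phi_{\diamond}^{+}\Psi_{\diamond}^{+}(\pi_0)$ as a glued sum of terms $\mathcal{E}_{n,m+2}\mathcal{L}^{n-2,m}_{(\dots)}(\pi_0)$; tensoring with the $\gamma'$-analogue and taking $U_{n-1,1}$-coinvariants replaces each diagonal $\mathcal{E}_{n,i}\otimes\mathcal{E}_{n,i}$ by $\mathcal{E}_{n,i}^{-}$ of the corresponding tensor (the characters $\psi$ and $\psi^{-1}$ cancel), and Lemma~\ref{lemma:Jacquet functor without character} identifies the $N_{n-1}$-coinvariants of $\mathcal{E}_{n,i}^{-}(B)$ with $\ind_{\rconj{w_i}T_{n-2}}^{T_n^{\circ}}(\delta_{Y_i}^{-1}\,\rconj{w_i}(B_{N_{n-2}}))$, where $B$ is a Jacquet module of $\theta_{n-2,1,\gamma}\otimes\theta_{n-2,1,\gamma'}$ whose $N_{n-2}$-coinvariants decompose by Corollary~\ref{corollary:composition series of Jacquet of tensor} into the $\mathcal{L}^{n-2,1}_e$-building blocks. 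Pushing the assumed nonvanishing through Frobenius reciprocity for the induction from $\rconj{w_i}T_{n-2}$, the central-character relation for $Z_n$, and a bookkeeping of the modulus factors $\delta_{B_n},\delta_{B_{n-2}},\delta_{Y_i}$ (which I expect to cancel, by the design of the normalizations), I obtain $\Hom_{T_{n-2}}\big((\mathcal{L}^{n-2,1}_e\theta_{n-2,1,\gamma}\otimes\mathcal{L}^{n-2,1}_e\theta_{n-2,1,\gamma'})_{N_{n-2}},\delta_{B_{n-2}}^{1/2}\rho\big)\ne 0$, where $\rho$ is $\eta$ with the two coordinates at positions $n-i+1$ and $n$ removed (reordered according to $w_i$). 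By the inductive hypothesis $\rho$ satisfies $(\star)$, and together with $\omega_\eta^2=1$ this already yields Kable's weaker relation $\eta_n^2=\eta_{n-i+1}^{-2}$.

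The main obstacle is precisely to upgrade this weaker relation to the exact $\eta_n=\eta_{n-i+1}^{-1}$ that defines a genuine pair of $(\star)$, compounded by the fact that, for even $n$, $\widetilde{Z}_n$ is not central, so $\theta_{n,1,\gamma}$ has no $\widetilde{Z}_n$-character and the identification of $\theta_{n,1,\gamma}(U_{n-1,1})$ as a module over the whole $\widetilde{Q}_{n-1,1}$ — rather than over $\widetilde{Q}_{n-1,1}\cap\widetilde{Y}_n$ together with $\widetilde{Z}_n^{2}$ — is delicate. I would deal with both points through a separate lemma, Lemma~\ref{lemma:only if direction, odd n or even with restriction}, proving the conclusion for all odd $n$ and, for even $n$, under an extra restriction on $c$, by a direct examination of the relevant geometric realizations generalizing the $\mathcal{S}(F^*)$-argument of the case $n=2$ (it is that argument which produces the exact pairing). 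The present Proposition would then follow by feeding that lemma into the dévissage above; in the case $c_1=1$ the reduction lands on $\GL_{n-1}$, of opposite parity, so that for even $n$ it lands in the clean situation, while in the case $c_1=0$ one invokes Lemma~\ref{lemma:only if direction, odd n or even with restriction} directly.
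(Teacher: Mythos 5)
Your skeleton coincides with the paper's: the base cases $n\le 2$, the $c_1=1$ reduction to rank $n-1$ (the paper's Lemma~\ref{lemma:reducing n by 1 when b starts with 1}), and, for $c_1=0$, the identification of $\theta|_{\widetilde{Y}_n}(U_{n-1,1})$ with $\Phi^+_{\diamond}\Psi^+_{\diamond}$ of a twist of $\theta_{n-2}$ via the vanishing of derivatives beyond the second, followed by Lemma~\ref{lemma:L of Y_n module}, the passage to $\mathcal{E}^-_{n,i}$, Lemma~\ref{lemma:Jacquet functor without character}, Corollary~\ref{corollary:composition series of Jacquet of tensor}, and the inductive hypothesis at rank $n-2$. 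The gap is in the endgame, i.e.\ exactly the step you flag as ``the main obstacle'' and then outsource to an argument that does not exist. First, the machinery you already set up yields more than the weak relation $\eta_n^2=\eta_{n-i+1}^{-2}$: the paper computes the action of the single element $t_x=diag(I_{n-i},x,I_{i-2},x)$, $x\in F^{*\mathe}$, on the induced $T_n^{\circ}Z_n^{\mathe}$-module appearing in \eqref{eq:final hom as torus} (Claim~\ref{claim:t_x acts trivially}) and obtains $\eta_{n-i+1}(x)\eta_n(x)=1$ on $F^{*\mathe}$; for odd $n$ this is already the exact relation $\eta_{n-i+1}=\eta_n^{-1}$, with no geometric model needed. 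Second, for even $n$ the paper never upgrades to an exact pairing at this point: it only needs $\eta_{n-i+1}^2=\eta_n^{-2}$ together with the extra hypothesis $\eta_n^2=1$ (a hypothesis on $\eta$, not on $c$, in Lemma~\ref{lemma:only if direction, odd n or even with restriction}), so that both characters land in the square-trivial part of $(\star)$. The genuinely hard configuration is therefore even rank, $c_1=0$, last coordinate not square-trivial, and your proposal has no mechanism for it: the fix you propose, generalizing the $\mathcal{S}(F^*)$/Savin-model argument of $n=2$, is precisely what the paper says is unavailable for general $n$ (the Flicker--Kazhdan--Savin realization does not extend; that is why Section~\ref{section:geometric} exists). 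The paper's device is a parity shift absent from your plan: adjoin a square-trivial character $\eta_0$, use Theorem~\ref{theorem:upper heredity of dist} to obtain hypothesis \eqref{eq:assumption main proposition for only if characterization} one rank higher (odd rank) for $\eta_0\otimes\eta$ and some $d$, note that $d_1=1$ is impossible because Lemma~\ref{lemma:reducing n by 1 when b starts with 1} would force the last coordinate to be square-trivial, and then invoke the $c_1=0$ lemma in odd rank, where no restriction is needed. Without the $t_x$-computation and without this trick, your induction does not close.

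A secondary inaccuracy: $U_{n-1,1}$ does not act by the character $\psi$ on $\theta_{n,1,\gamma}$, so your reduction of the diagonal $U_{n-1,1}$-action to ``trivial'' at the level of the whole representation is not meaningful; the cancellation you want is obtained, as in the paper, by taking the derivative of $\theta'$ with respect to $\psi^{-1}$ (or $\psi_{\alpha}$, cf.\ Remark~\ref{remark:center on Lambda}) so that the characters cancel in the diagonal $U_{n-1,1}$-coinvariants of the $\mathcal{E}\otimes\mathcal{E}$ terms via \eqref{eq:action of u in geometric general n}.
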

The proof is given below.
Now we prove Theorem~\ref{theorem:distinguished principal series}. Namely, let $\tau$ be a principal series representation of
$\GL_n$ induced from $\eta$. If $\tau$ is distinguished, $\eta$ satisfies $(\star)$. Conversely, if $\tau$ is irreducible and
$\eta$ satisfies $(\star)$, then $\tau$ is distinguished.
\begin{proof}[Proof of Theorem~\ref{theorem:distinguished principal series}]
Assume that $\tau$ is distinguished. According to the Frobenius reciprocity, for some $\gamma$ and $\gamma'$,
\begin{align}\label{eq:Frobenius characterization principal series proof}
\Hom_{T_n}((\theta_{n,1,\gamma}\otimes\theta_{n,1,\gamma'})_{N_n},\delta_{B_n}^{1/2}\eta^{-1})\ne0.
\end{align}
Lemma~\ref{lemma:composition series of Jacquet of tensor} implies that the condition of
Proposition~\ref{proposition:only if direction, for any principal series} holds and the result follows from the proposition.

In the other direction, if $\eta$ satisfies $(\star)$, then since $\tau$ is irreducible,
permuting the inducing data does not change $\tau$. Hence we may assume that $\tau$ is induced from
\begin{align*}
(\eta_1\otimes\eta_1^{-1})\otimes\ldots\otimes(\eta_k\otimes\eta_k^{-1})\otimes\eta_{2k+1}\otimes\ldots\otimes\eta_{n},
\end{align*}
where $0\leq k\leq \lfloor n/2\rfloor$ and for $i>2k$, $\eta_i^2=1$. Now
Claims~\ref{claim:distinguished for n=1} and \ref{claim:distinguished principal series n=2} and
Theorem~\ref{theorem:upper heredity of dist} imply
that $\tau$ is distinguished.
\end{proof}

Theorem~\ref{theorem:Savin's conjecture} - the characterization of distinguished spherical representations of $\GL_n$,
is easily seen to follow from Theorem~\ref{theorem:distinguished principal series}
and the description of the tautologial lift (see e.g. \cite{Kable2} Section~6).
\begin{proof}[Proof of Theorem~\ref{theorem:Savin's conjecture}]
The ``if" part was proved by Kable \cite{Kable2}. If $\tau$ is the irreducible unramified quotient of $\Ind_{B_n}^{\GL_n}(\eta)$ and $\tau$ is distinguished, for some $\gamma$ and $\gamma'$,
\begin{align*}
\Hom_{\GL_n}(\theta_{n,1,\gamma}\otimes\theta_{n,1,\gamma'},\Ind_{B_n}^{\GL_n}(\eta^{-1}))\ne0.
\end{align*}
Hence by Theorem~\ref{theorem:distinguished principal series} the character $\eta$ satisfies $(\star)$. Since there are exactly two unramified square trivial characters of $F^*$, we may assume, perhaps after applying a permutation, that
$\eta$ takes the form
\begin{align*}
\eta_1\otimes\ldots\otimes\eta_{\lfloor n/2\rfloor}\otimes\eta_0\otimes\eta_{\lfloor n/2\rfloor}^{-1}\otimes\ldots\otimes\eta_1^{-1}.
\end{align*}
Here the character $\eta_0$ appears only when $n$ is odd. Because we assumed $\omega_{\tau}=1$,
$\eta_0=1$. This implies that $\tau$ is the lift of a representation of $SO_{2\lfloor n/2\rfloor}$ in the even case, or
$Sp_{2\lfloor n/2\rfloor}$ when $n$ is odd (\cite{Kable2} Section~6).
\end{proof}

\begin{proof}[Proof of Proposition~\ref{proposition:only if direction, for any principal series}]
The idea is to reduce the proof to a computation on $\widetilde{Y}_n$ (or $Y_n$)-modules. This is possible if $c_1=0$. Then we appeal to the results of Section~\ref{section:geometric}. Specifically, the $\widetilde{B}_n^{\circ}$-modules
$\mathcal{L}^{n,1}_c\theta_{n,1,\gamma}$ and $\mathcal{L}^{n,1}_c\theta_{n,1,\gamma'}$ are described using Lemma~\ref{lemma:L of Y_n module}; the Jacquet module of their tensor, which is a representation of $B_n^{\circ}$, is analyzed using Lemma~\ref{lemma:Jacquet functor without character}. Then $\eta$ becomes a quotient of a representation induced from $T_{n-2}$ to $T_n^{\circ}$. The passage from $T_n^{\circ}$ to $T_n$ depends on the parity of $n$.

To simplify the notation, denote $\theta=\theta_{n,1,\gamma}$, or $\theta_{n}$ to clarify the dimension. Similarly,
$\theta'=\theta_{n,1,\gamma'}$. The actual pseudo-characters $\gamma$ and $\gamma'$ may vary during the proof, but this will not bare any impact on the arguments.

Our first step is to reduce the proof to the case $c_1=0$. To this end we claim the following.
\begin{lemma}\label{lemma:reducing n by 1 when b starts with 1}
Let $b\in\{0,1\}^{n-2}$, $n\geq3$. If
\begin{align*}
\Hom_{T_n}((\mathcal{L}^{n,1}_{(1,b)}\theta\otimes\mathcal{L}^{n,1}_{(1,b)}\theta')_{N_n},\delta_{B_n}^{1/2}\eta_1\otimes\ldots\otimes\eta_n)\ne0,
\end{align*}
then $\eta_n^{2}=1$ and for some character $\epsilon$ of $F^*$ such that $\epsilon^2=1$,
\begin{align*}
\Hom_{T_{n-1}}((\mathcal{L}^{n-1,1}_{b}\theta_{n-1}\otimes\mathcal{L}^{n-1,1}_{b}\theta'_{n-1})_{N_{n-1}},\delta_{B_{n-1}}^{1/2}
\epsilon\eta_1\otimes\ldots\otimes\epsilon\eta_{n-1})\ne0.
\end{align*}
\end{lemma}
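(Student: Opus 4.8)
The plan is to use that the leading entry of $(1,b)$ is $1$ to replace $\theta$ by its $U_{n-1,1}$-Jacquet module, which \eqref{eq:result of Kable for Jacquet module of exceptional} identifies with the metaplectic tensor $\delta_{Q_{n-1,1}}^{1/4}\theta_{n-1}\widetilde{\otimes}\theta_1$ (the pseudo-characters being immaterial, as in the proof of Proposition~\ref{proposition:only if direction, for any principal series}); the $\widetilde{\GL}_1$-factor will produce $\eta_n^2=1$, and the $\widetilde{\GL}_{n-1}$-factor the asserted statement on $\GL_{n-1}$, up to a quadratic twist. First I would note $\mathcal{L}^{n,1}_{(1,b)}\theta=\mathcal{L}^{n,2}_b(\theta_{U_{n-1,1}})$. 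Since $U_{n-1,1}\triangleleft N_n$ is normalized by every $U_{n-k,k}$, the functors $\mathcal{L}^{n,k}_{b_{k-1}}$ ($k\ge2$) preserve the triviality of the $U_{n-1,1}$-action, so $U_{n-1,1}$ acts trivially on $\mathcal{L}^{n,1}_{(1,b)}\theta$ and on $\mathcal{L}^{n,1}_{(1,b)}\theta'$; writing $N_n=N_{n-1}\ltimes U_{n-1,1}$ with $N_{n-1}\subset\GL_{n-1}$, the $N_n$-coinvariants in the hypothesis are the $N_{n-1}$-coinvariants. Moreover, on a $U_{n-1,1}$-trivial module each $U_{n-k,k}$ acts through its image $U_{n-k,k}/(U_{n-k,k}\cap U_{n-1,1})\isomorphic U_{n-k,k-1}\subset\GL_{n-1}$, so there $\mathcal{L}^{n,k}_{x}$ agrees with $\mathcal{L}^{n-1,k-1}_{x}$; iterating, $\mathcal{L}^{n,2}_b=\mathcal{L}^{n-1,1}_b$ on such modules. (This is a manipulation with the normal-subgroup identities of \cite{BZ1} (2.32--2.33), in the spirit of Lemma~\ref{lemma:composition series of Jacquet of tensor}.)

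Next I would restrict $\theta_{U_{n-1,1}}$ to $p^{-1}(\GL_{n-1}^{\square}\times\GL_1)$, where $\widetilde{\GL}_{n-1}^{\square}$ and $\widetilde{\GL}_1$ commute. By Claim~\ref{claim:Mackey theory applied to metaplectic tensor} (with $n_1=n-1$, $n_2=1$; the ``odd $n_1$ and $n_2$'' case when $n$ is even, the ``even $n_1$, odd $n_2$'' case when $n$ is odd),
\[
\theta_{U_{n-1,1}}\big|_{p^{-1}(\GL_{n-1}^{\square}\times\GL_1)}=\delta_{Q_{n-1,1}}^{1/4}\bigoplus_g\rho_g\otimes\chi_g\theta_1,
\]
with $\chi_g$ a quadratic character of $\widetilde{\GL}_1$ and each $\rho_g$ a $\widetilde{\GL}_{n-1}^{\square}$-direct summand of $\theta_{n-1}^{\square}$ ($\theta_{n-1}^{\square}$ itself if $n$ is even, a Clifford-summand $\rconj{g}\sigma$ if $n$ is odd). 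Since the unipotent subgroups entering $\mathcal{L}^{n-1,1}_b$ lie in $\GL_{n-1}^{\square}$ and leave the $\GL_1$-factor untouched, the Jacquet functors commute with this restriction and with the outer tensor, so by the previous paragraph
\[
\mathcal{L}^{n,1}_{(1,b)}\theta\big|_{p^{-1}(\GL_{n-1}^{\square}\times\GL_1)}=\delta_{Q_{n-1,1}}^{1/4}\bigoplus_g\big(\mathcal{L}^{n-1,1}_b\rho_g\big)\otimes\chi_g\theta_1,
\]
each $\mathcal{L}^{n-1,1}_b\rho_g$ being a direct summand of $(\mathcal{L}^{n-1,1}_b\theta_{n-1})|_{\GL_{n-1}^{\square}}$; similarly for $\theta'$.

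Substituting into the hypothesis, using $T_n\cap(\GL_{n-1}^{\square}\times\GL_1)=T_{n-1}^{\square}\times\GL_1$ and $\delta_{B_n}^{1/2}=\delta_{B_{n-1}}^{1/2}\delta_{Q_{n-1,1}}^{1/2}$ on $T_n=T_{n-1}\times\GL_1$, the two $\delta_{Q_{n-1,1}}^{1/4}$-factors cancel $\delta_{Q_{n-1,1}}^{1/2}$, and restricting the non-vanishing $\Hom$ to $T_{n-1}^{\square}\times\GL_1$ yields, for some $(g,g')$,
\[
\Hom_{T_{n-1}^{\square}\times\GL_1}\!\Big(\big(\mathcal{L}^{n-1,1}_b\rho_g\otimes\mathcal{L}^{n-1,1}_b\rho'_{g'}\big)_{N_{n-1}}\otimes\big(\chi_g\chi_{g'}\theta_1\theta_1'\big),\ \delta_{B_{n-1}}^{1/2}\big(\eta_1\otimes\cdots\otimes\eta_{n-1}\big)\big|_{T_{n-1}^{\square}}\otimes\eta_n\Big)\ne0.
\]
The $\GL_1$-factor forces $\eta_n=\chi_g\chi_{g'}\theta_1\theta_1'$; as $\chi_g,\chi_{g'}$ are quadratic and $\theta_1\theta_1'$ descends to a quadratic character of $F^*$ (for $n_2=1$ the pertinent $\gamma$'s are quadratic, cf.\ the proof of Claim~\ref{claim:distinguished for n=1}), this gives $\eta_n^2=1$. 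The $T_{n-1}^{\square}$-factor, after enlarging the summands to $(\mathcal{L}^{n-1,1}_b\theta_{n-1})|_{\GL_{n-1}^{\square}}$ and $(\mathcal{L}^{n-1,1}_b\theta'_{n-1})|_{\GL_{n-1}^{\square}}$ (of which they are direct summands) and twisting by $\delta_{B_{n-1}}^{-1/2}$, is handled by Frobenius reciprocity: $\Ind_{T_{n-1}^{\square}}^{T_{n-1}}((\eta_1\otimes\cdots\otimes\eta_{n-1})|_{T_{n-1}^{\square}})$ is the sum of the extensions of that character, which are precisely the $\epsilon\eta_1\otimes\cdots\otimes\epsilon\eta_{n-1}$ with $\epsilon^2=1$ (since $T_{n-1}/T_{n-1}^{\square}\isomorphic F^*/F^{*2}$ via the determinant). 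Hence $\Hom_{T_{n-1}}((\mathcal{L}^{n-1,1}_b\theta_{n-1}\otimes\mathcal{L}^{n-1,1}_b\theta'_{n-1})_{N_{n-1}},\delta_{B_{n-1}}^{1/2}\,\epsilon\eta_1\otimes\cdots\otimes\epsilon\eta_{n-1})\ne0$ for some such $\epsilon$, as desired.

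I expect the genuine obstacle to be establishing the identity $\mathcal{L}^{n,2}_b=\mathcal{L}^{n-1,1}_b$ on $U_{n-1,1}$-trivial modules and carrying the parity split of Claim~\ref{claim:Mackey theory applied to metaplectic tensor} uniformly through the argument; the modulus-character bookkeeping is clean, the cancellation $\delta_{Q_{n-1,1}}^{1/4}\cdot\delta_{Q_{n-1,1}}^{1/4}=\delta_{Q_{n-1,1}}^{1/2}$ being exactly what the normalizations built into the exceptional representations provide.
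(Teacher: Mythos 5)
Your proposal is correct and rests on the same main inputs as the paper---the reduction $\mathcal{L}^{n,1}_{(1,b)}\theta=\mathcal{L}^{n-1,1}_{b}(\theta_{U_{n-1,1}})$, Kable's formula \eqref{eq:result of Kable for Jacquet module of exceptional} together with Claim~\ref{claim:Mackey theory applied to metaplectic tensor}, and the relation $\delta_{Q_{n-1,1}}^{-1/2}\delta_{B_n}^{1/2}=\delta_{B_{n-1}}^{1/2}$---but you run the restriction step in the mirrored direction. The paper restricts $\theta_{U_{n-1,1}}$ to $p^{-1}(\GL_{n-1}\times\GL_1^{\square})$, i.e.\ it keeps the full $\GL_{n-1}$ and squares the last coordinate: there $\eta_n^2=1$ falls out because only $t_n^2$ is visible, and the quadratic twist $\epsilon$ appears only when $n-1$ is odd, as one of the characters $\chi_g$ in the decomposition $\xi=\bigoplus_g\chi_g\theta_{n-1}$ (the claim is used with the roles of the two blocks reversed). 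You restrict instead to $p^{-1}(\GL_{n-1}^{\square}\times\GL_1)$, which is the case literally covered by Claim~\ref{claim:Mackey theory applied to metaplectic tensor}, and you pin down $\eta_n$ explicitly as $\chi_g\chi_{g'}\theta_1\theta_1'$, a product of quadratic characters (the $\GL_1$ data are pseudo-characters with trivial Hilbert twist, hence honest quadratic characters, so this is fine). The price is that your nonvanishing statement initially lives over $T_{n-1}^{\square}$ only, and you need two extra (but routine) observations: that $\rconj{g}\sigma$ is again a direct summand of $\theta_{n-1}^{\square}$, so the Hom can be enlarged to $(\mathcal{L}^{n-1,1}_b\theta_{n-1}\otimes\mathcal{L}^{n-1,1}_b\theta'_{n-1})_{N_{n-1}}$, and the Frobenius reciprocity step along the finite-index inclusion $T_{n-1}^{\square}<T_{n-1}$, which is exactly where your $\epsilon$ (a character of $T_{n-1}/T_{n-1}^{\square}\isomorphic F^*/F^{*2}$) comes from, uniformly in the parity of $n$. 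Both routes are sound: the paper's keeps the full torus $T_{n-1}$ throughout and so needs no re-ascent, while yours uses the Mackey claim in its stated form, concentrates the Clifford-theoretic bookkeeping on the $\GL_1$-factor, and produces $\epsilon$ by a single uniform mechanism rather than by a parity case split.
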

\begin{remark}\label{remark:eta twisted by epsilon preserves star}
In general $\eta_1\otimes\ldots\otimes\eta_n$ satisfies $(\star)$ if and only if
$\epsilon\eta_1\otimes\ldots\otimes\epsilon\eta_n$ does, because $\epsilon^2=1$.
\end{remark}

Next we state the following lemma.
\begin{lemma}\label{lemma:only if direction, odd n or even with restriction}
Let $n\geq3$ and assume Proposition~\ref{proposition:only if direction, for any principal series} holds for
all $n_0\leq n-2$. Let $\eta$ be a character of $T_n$ such that \eqref{eq:assumption main proposition for only if characterization}
holds with $c_1=0$. If $n$ is even, assume $\eta_n^2=1$. Then $\eta$ satisfies $(\star)$.
\end{lemma}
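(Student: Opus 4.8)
The plan is to combine the geometric results of Section~\ref{section:geometric} with the inductive hypothesis on $n-2$. Since $c_1=0$, the first Jacquet functor applied in $\mathcal{L}^{n,1}_c$ is $\mathcal{L}^{n,1}_0\theta=\theta(U_{n-1,1})$, the kernel of the projection onto the first derivative. By Kable's computation of the derivatives of an exceptional representation together with the assumed absence of a Whittaker model for $n>3$, which makes the second derivative the highest nonzero one (cf. Remark~\ref{remark:usage of |2|=1}), all derivatives above the second vanish, so the Bernstein--Zelevinsky filtration of $\theta|_{Y_n}$ collapses and $\theta(U_{n-1,1})=\Phi^+\Psi^+(\theta^{(2)})=\Phi_{\diamond}^+\Psi_{\diamond}^+(\pi_0)$ as $Y_n$-modules, where $|\det|\pi_0$ is the second derivative of $\theta$; again by Kable, $\pi_0$ is a $\det$-character twist of the exceptional representation $\theta_{n-2,1,\gamma_1}$. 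The same applies to $\theta'$. Writing $c=(0,c_2,\ldots,c_{n-1})$, I then apply $\mathcal{L}^{n,2}_{(c_2,\ldots,c_{n-1})}$ and invoke Lemma~\ref{lemma:L of Y_n module} to present $\mathcal{L}^{n,1}_c\theta$ as a $B_n^{\circ}$-module glued from the terms $\mathcal{E}_{n,m+2}\mathcal{L}^{n-2,m}_{e_m}(\pi_0)$, $\min(1,k)\leq m\leq k$, and likewise for $\theta'$.

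Next I pass to the tensor product and its $N_n$-coinvariants, using $(\cdot)_{N_n}=((\cdot)_{U_{n-1,1}})_{N_{n-1}}$. The ``off-diagonal'' pieces with $m\neq m'$ contribute nothing: for $i\neq j$ a coordinate subgroup of $U_{n-1,1}$ lying inside $E_{n,\max(i,j)}(Y_{\max(i,j)})$ acts trivially on $\mathcal{E}_{n,\min(i,j)}$ but acts on $\mathcal{E}_{n,\max(i,j)}$ through the nontrivial characters $\psi(x_1\,\cdot)$ with $x_1\in F^*$, so already the $U_{n-1,1}$-coinvariants of such a tensor vanish. Hence the hypothesis forces, for a single value of $m$ (put $i=m+2$),
\[
\Hom_{T_n}\big((\mathcal{E}_{n,i}(\sigma)\otimes\mathcal{E}_{n,i}(\sigma'))_{N_n},\ \delta_{B_n}^{1/2}\eta\big)\neq0,
\]
with $\sigma=\mathcal{L}^{n-2,m}_{e_m}(\pi_0)$ and $\sigma'$ the analogue for $\pi_0'$. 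Since $(\mathcal{E}_{n,i}(\sigma)\otimes\mathcal{E}_{n,i}(\sigma'))_{U_{n-1,1}}=\mathcal{E}_{n,i}^-(\sigma\otimes\sigma')$, Lemma~\ref{lemma:Jacquet functor without character} identifies the $N_n$-coinvariants with $\ind_{\rconj{w_i}T_{n-2}}^{T_n^{\circ}}(\delta_{Y_i}^{-1}\,\rconj{w_i}((\sigma\otimes\sigma')_{N_{n-2}}))$ as $T_n^{\circ}$-modules. Applying Corollary~\ref{corollary:composition series of Jacquet of tensor} to $(\sigma\otimes\sigma')_{N_{n-2}}$, and pulling the common $\det$-twist relating $\pi_0$ to $\theta_{n-2,1,\gamma_1}$ out of the $\mathcal{L}$'s, reduces matters to: for some $c'\in\{0,1\}^{m-1}$ and some character $\zeta$ of $T_{n-2}$ built explicitly from $\eta$ (and from the modulus factors $\delta_{Y_i}$, $\delta_{B_n}$, $\delta_{B_{n-2}}$ and the permutation $w_i$),
\[
\Hom_{T_{n-2}}\big((\mathcal{L}^{n-2,1}_{c'}\theta_{n-2,1,\gamma_1}\otimes\mathcal{L}^{n-2,1}_{c'}\theta_{n-2,1,\gamma_1'})_{N_{n-2}},\ \delta_{B_{n-2}}^{1/2}\zeta\big)\neq0.
\]
Proposition~\ref{proposition:only if direction, for any principal series} for $n-2$, available by the hypothesis of the lemma, then gives that $\zeta$ satisfies $(\star)$. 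Unwinding the modulus bookkeeping and the action of $w_i$, this says precisely that the subfamily $\{\eta_j:\ 1\leq j\leq n-1,\ j\neq n-i+1\}$ of $n-2$ of the inducing characters satisfies $(\star)$.

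It remains to control the two leftover characters $\eta_{n-i+1}$ and $\eta_n$, and this is exactly the ``passage from $T_n^{\circ}$ to $T_n$'' flagged in the proof of the Proposition, the only step that uses the structure of the centre of the cover. The coordinate $\eta_{n-i+1}$ is free in the induction of Lemma~\ref{lemma:Jacquet functor without character}, so the $T_n^{\circ}$-picture alone pins nothing on it; what saves the argument is that $\mathcal{L}^{n,1}_0\theta=\theta(U_{n-1,1})$ is not merely a $Y_n$-module but a $Q_{n-1,1}$-module, and every subsequent Jacquet functor is taken with respect to a unipotent subgroup normalised by the $\GL_1$-factor $Z_n^{(n)}$ of $M_{n-1,1}$, so this $\GL_1$ acts throughout. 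Making its action explicit — the higher-$n$ analogue of the formula $\mathrm{diag}(a,b)f(x)=|ab^{-1}|f(xab^{-1})$ on Savin's model used in the proof of Claim~\ref{claim:distinguished principal series n=2} — shows that $Z_n^{(n)}$ acts by translation in the same one-dimensional direction as the free coordinate, so that the module in the last displayed $\Hom_{T_n}$ is $\ind_{S_n}^{T_n}(\lambda_n)$ for a codimension-one subtorus $S_n$ coupling positions $n-i+1$ and $n$, with $\lambda_n$ a character determined by the $\theta_{n-2,1,\gamma_1}$-data and whose restriction to $S_n$ reflects the behaviour of the centre of $\theta_{n,1,\gamma}$. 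When $n$ is odd, $\widetilde{Z}_n$ is central and $\theta_{n,1,\gamma}$ carries a genuine central character; the coupling then comes out as a mutual inversion $\eta_{n-i+1}=\eta_n^{-1}$, and adjoining this pair to the $(\star)$-family of the previous paragraph yields $(\star)$ for $\eta$. When $n$ is even, $\widetilde{Z}_n$ is not central, the coupling is instead a plain equality $\eta_{n-i+1}=\eta_n$ up to a square-trivial factor, and this is compatible with $(\star)$ only when these two characters are square-trivial; the hypothesis $\eta_n^2=1$ supplies exactly this, so again $\eta$ satisfies $(\star)$. The main obstacle is this last step: isolating the $Z_n^{(n)}$-action on $\theta(U_{n-1,1})$ and extracting the parity-dependent coupling — the modulus-character book-keeping in the reductions above is routine by comparison.
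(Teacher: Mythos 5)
Most of your outline coincides with the paper's argument and is sound: with $c_1=0$, the absence of higher derivatives identifies $\theta|_{\widetilde Y_n}(U_{n-1,1})$ with $\Phi_{\diamond}^+\Psi_{\diamond}^+(\pi_0)$ for $\pi_0=|\det|^{1/2}\theta_{n-2}$, Lemma~\ref{lemma:L of Y_n module} gives the gluing into the pieces $\mathcal{E}_{n,m+2}\mathcal{L}^{n-2,m}(\pi_0)$, the off-diagonal pieces die after taking $U_{n-1,1}$-coinvariants by \eqref{eq:action of u in geometric general n}, Lemma~\ref{lemma:Jacquet functor without character} produces the induced $T_n^{\circ}$-module, and Corollary~\ref{corollary:composition series of Jacquet of tensor} plus the inductive hypothesis for $n-2$ handles the $n-2$ characters $\eta_1,\ldots,\eta_{n-i},\eta_{n-i+2},\ldots,\eta_{n-1}$.

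The genuine gap is in your mechanism for recovering the two remaining coordinates. You propose to track the action of the $\GL_1$-factor of $M_{n-1,1}$ through the whole construction, asserting it ``acts by translation in the same one-dimensional direction as the free coordinate'' and yields an $\ind_{S_n}^{T_n}(\lambda_n)$ structure coupling positions $n-i+1$ and $n$. This is not available: the identification of $\theta|_{\widetilde Y_n}(U_{n-1,1})$ with $\Phi_{\diamond}^+\Psi_{\diamond}^+(\pi_0)$, and all of the Section~\ref{section:geometric} statements (Lemma~\ref{lemma:L of Y_n module}, Lemma~\ref{lemma:Jacquet functor without character}), are identifications of $\widetilde Y_n$-, $B_n^{\circ}$- and $T_n^{\circ}$-modules only, so no action of $\mathrm{diag}(I_{n-1},a)$ is defined on the right-hand sides without substantial new arguments; moreover, in the cover the preimage of this $\GL_1$ does not commute with $\widetilde T_{n-1}$, is not central, and does not act by a character on the relevant pieces, so there is no analogue of the explicit $n=2$ formula you invoke. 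The paper's remedy is different and this is exactly where the real content lies: one twists by the central character $\omega_{\theta}\omega_{\theta'}$ of $\widetilde Z_n^{\mathe}$ (which does commute with $\widetilde B_n^{\circ}$ and acts by the known pseudo-characters $\gamma,\gamma'$), and then the element $t_x=\mathrm{diag}(I_{n-i},x,I_{i-2},x)=z_xd_x$ with $x\in F^{*\mathe}$ is shown, via the Weil-factor identities, to act by $\delta_{B_n}^{1/2}(t_x)$ (Claim~\ref{claim:t_x acts trivially}); this yields $\eta_{n-i+1}(x)\eta_n(x)=1$ on $F^{*\mathe}$, i.e.\ $\eta_{n-i+1}=\eta_n^{-1}$ for odd $n$ and only $\eta_{n-i+1}^2=\eta_n^{-2}$ for even $n$, which is where the hypothesis $\eta_n^2=1$ enters. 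Your parity-dependent ``coupling'' (inversion for odd $n$, equality up to square-trivial twist for even $n$) is asserted rather than derived, and in the even case it is also stated incorrectly (the relation one can get is on squares, coming from $Z_n^2$ being the central part, not a plain equality); as written, the step that your own proposal identifies as the main obstacle is not proved.
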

Before proving the lemmas, let us derive the proposition. According to
Claim~\ref{claim:distinguished for n=1} and the proof of Claim~\ref{claim:distinguished principal series n=2},
Proposition~\ref{proposition:only if direction, for any principal series} holds for $n=1,2$.

Let $n=3$. If $c_1=0$, $\eta$ satisfies $(\star)$ according to Lemma~\ref{lemma:only if direction, odd n or even with restriction}. Otherwise $c_1=1$, by Lemma~\ref{lemma:reducing n by 1 when b starts with 1} we get $\eta_3^2=1$ and
reduce the problem to $n=2$, and then either $\eta_1^2=\eta_2^2$ or $\eta_1^{-1}=\eta_2$. This completes the proof (in fact, for $n=3$ the proof can also be deduced from \cite{Savin}).

Let $n\geq 3$ be odd and assume
Proposition~\ref{proposition:only if direction, for any principal series} holds for $n_0\leq n$.
We deduce it for $n+1$.

\begin{enumerate}[leftmargin=*]
\item $c_1=1$\label{item:c_1=1}: Apply Lemma~\ref{lemma:reducing n by 1 when b starts with 1} to $\eta_1\otimes\ldots\otimes\eta_{n+1}$
and obtain $\eta_{n+1}^2=1$ and
\begin{align*}
\Hom_{T_{n}}((\mathcal{L}^{n,1}_{b}\theta_{n}\otimes\mathcal{L}^{n,1}_{b}\theta'_{n})_{N_n},\delta_{B_{n}}^{1/2}
\epsilon\eta_1\otimes\ldots\otimes\epsilon\eta_{n})\ne0.
\end{align*}
Then we can apply Proposition~\ref{proposition:only if direction, for any principal series} with $n$ and deduce that
$\eta_1\otimes\ldots\otimes\eta_{n}$ satisfies $(\star)$ and so does $\eta_1\otimes\ldots\otimes\eta_{n+1}$.

\item $c_1=0$ and $\eta_{n+1}^2=1$: the result follows from Lemma~\ref{lemma:only if direction, odd n or even with restriction} ($n+1$ is even).

\item $c_1=0$ and $\eta_{n+1}^2\ne1$. Let $\eta_0$ be a character of $F^*$ such that $\eta_0^2=1$. According to
Theorem~\ref{theorem:upper heredity of dist}, the principal series representation induced from the character $\eta_0\otimes\eta_1\otimes\ldots\otimes\eta_{n+1}$ of $T_{n+2}$ is
distinguished. Hence for some $d\in\{0,1\}^{n+1}$,
\begin{align*}
\Hom_{T_{n+2}}((\mathcal{L}^{n+2,1}_{d}\theta_{n+2}\otimes\mathcal{L}^{n+2,1}_{d}\theta'_{n+2})_{N_{n+2}},\delta_{B_{n+2}}^{1/2}
\eta_0\otimes\ldots\otimes\eta_{n+1})\ne0.
\end{align*}
Since $\eta_{n+1}^2\ne1$, Lemma~\ref{lemma:reducing n by 1 when b starts with 1} implies $d_1=0$. Therefore we may apply
Lemma~\ref{lemma:only if direction, odd n or even with restriction} and deduce that $\eta_0\otimes\ldots\otimes\eta_{n+1}$ satisfies
$(\star)$ and so does $\eta_1\otimes\ldots\otimes\eta_{n+1}$.
\end{enumerate}

Having established Proposition~\ref{proposition:only if direction, for any principal series} for $n+1$ unconditionally, we
handle $n+2$. If $c_1=1$, we may proceed as in \eqref{item:c_1=1} and apply Lemma~\ref{lemma:reducing n by 1 when b starts with 1}. We reduce the
proof to a character of $T_{n+1}$ where the result is now known to hold. Finally if $c_1=0$ we appeal directly to
Lemma~\ref{lemma:only if direction, odd n or even with restriction}.
\end{proof}

\begin{proof}[Proof of Lemma~\ref{lemma:reducing n by 1 when b starts with 1}]
Since $U_{n-1,1}$ acts trivially on $\theta_{U_{n-1,1}}$ and $U_{n-l,l}\triangleleft N_n$ for all $l$,
$\mathcal{L}^{n,1}_{(1,b)}\theta=\mathcal{L}^{n-1,1}_{b}\theta_{U_{n-1,1}}$.
Hence the assumption implies
\begin{align}\label{eq:reducing n by 1 when b starts with 1 first observation}
\Hom_{T_n}((\mathcal{L}^{n-1,1}_{b}\theta_{U_{n-1,1}}\otimes\mathcal{L}^{n-1,1}_{b}\theta'_{U_{n-1,1}})_{N_{n-1}},\delta_{B_{n}}^{1/2}\eta_1\otimes\ldots\otimes\eta_n)\ne0.
\end{align}
Put $T_n^{\star}=\{diag(t_1,\ldots,t_{n-1},t_n^2):t_i\in F^*\}$. Then
\begin{align*}
(\mathcal{L}^{n-1,1}_{b}\theta_{U_{n-1,1}})|_{p^{-1}(T_n^{\star}N_{n-1})}=
(\mathcal{L}^{n-1,1}_{b}(\theta_{U_{n-1,1}}|_{p^{-1}(\GL_{n-1}\times\GL_{1}^{\square})}))|_{p^{-1}(T_n^{\star}N_{n-1})}.
\end{align*}
According to \eqref{eq:result of Kable for Jacquet module of exceptional} and
Claim~\ref{claim:Mackey theory applied to metaplectic tensor},
\begin{align*}
&\delta_{Q_{n-1,1}}^{-1/4}\theta_{U_{n-1,1}}|_{p^{-1}(\GL_{n-1}\times\GL_{1}^{\square})}=\xi\otimes\theta_1^{\square},\\
&\xi=\begin{dcases}
\bigoplus_{g\in F^{*2}\bsl F^*}\chi_g\theta_{n-1}&\text{$n-1$ is odd,}\\
\theta_{n-1}&\text{$n-1$ is even.}
\end{dcases}
\end{align*}
Here $\chi_g(x)=(\det{x},g)$. The tensor $\xi\otimes\theta_1^{\square}$ commutes with the application of $\mathcal{L}^{n-1,1}_{b}$ whence
\begin{align*}
\mathcal{L}^{n-1,1}_{b}(\theta_{U_{n-1,1}}|_{p^{-1}(\GL_{n-1}\times\GL_{1}^{\square})})
=\delta_{Q_{n-1,1}}^{1/4}(\mathcal{L}^{n-1,1}_{b}\xi)\otimes\theta_1^{\square}.
\end{align*}
Therefore
\begin{align*}
(\mathcal{L}^{n-1,1}_{b}\theta_{U_{n-1,1}}\otimes
\mathcal{L}^{n-1,1}_{b}\theta'_{U_{n-1,1}})|_{T_n^{\star}N_{n-1}}
=\delta_{Q_{n-1,1}}^{1/2}
(\mathcal{L}^{n-1,1}_{b}\xi\otimes\mathcal{L}^{n-1,1}_{b}\xi')|_{B_{n-1}}\otimes(\theta_1^{\square}\otimes{\theta'}_1^{\square})|_{F^{*2}},
\end{align*}
where $\xi'$ is defined as $\xi$, with respect to $\theta'_{n-1}$.
Also $\delta_{Q_{n-1,1}}^{-1/2}\delta_{B_n}^{1/2}=\delta_{B_{n-1}}^{1/2}$. Plugging these observations
into \eqref{eq:reducing n by 1 when b starts with 1 first observation} yields
\begin{align*}
\Hom_{T_n^{\star}}(
(\mathcal{L}^{n-1,1}_{b}\xi\otimes\mathcal{L}^{n-1,1}_{b}\xi')_{N_{n-1}}\otimes(\theta_1^{\square}\otimes{\theta'}_1^{\square}),
\delta_{B_{n-1}}^{1/2}\eta_1\otimes\ldots\otimes\eta_n)\ne0.
\end{align*}
This implies $\eta_n^2=1$ and
\begin{align*}
\Hom_{T_{n-1}}(
(\mathcal{L}^{n-1,1}_{b}\xi\otimes\mathcal{L}^{n-1,1}_{b}\xi')_{N_{n-1}},
\delta_{B_{n-1}}^{1/2}\eta_1\otimes\ldots\otimes\eta_{n-1})\ne0.
\end{align*}

Now if $n-1$ is even, the second assertion follows immediately. Assume $n-1$ is odd. Because $\mathcal{L}^{n-1,1}_{b}$ commutes with finite
direct sums and $\chi_g$ is a character of $\GL_{n-1}$, we reach
\begin{align*}
\Hom_{T_{n-1}}(
\bigoplus_{g,g'\in F^{*2}\bsl F^*}
\chi_{g}\chi_{g'}(
 \mathcal{L}^{n-1,1}_{b}\theta_{n-1}\otimes\mathcal{L}^{n-1,1}_{b}\theta'_{n-1})_{N_{n-1}},
\delta_{B_{n-1}}^{1/2}\eta_1\otimes\ldots\otimes\eta_{n-1})\ne0.
\end{align*}
As a character of $T_{n-1}$, $\chi_g(t)\chi_{g'}(t)=\chi_{gg'}(t)=\prod_{i=1}^{n-1}\epsilon_{gg'}(t_i)$,
where $\epsilon_{gg'}(x)=(x,gg')$ ($x\in F^*$). Hence for some $h\in F^*$,
\begin{align*}
\Hom_{T_{n-1}}((\mathcal{L}^{n-1,1}_{b}\theta_{n-1}\otimes\mathcal{L}^{n-1,1}_{b}\theta'_{n-1})_{N_{n-1}},
\delta_{B_{n-1}}^{1/2}\epsilon_h\eta_1\otimes\ldots\otimes\epsilon_h\eta_{n-1})\ne0.
\end{align*}
Clearly $\epsilon_h^2=1$ and the claim is proved.
\end{proof}

\begin{proof}[Proof of Lemma~\ref{lemma:only if direction, odd n or even with restriction}]
Fix $c\in\{0,1\}^{n-1}$ with $c_1=0$.
Kable \cite{Kable} (Theorem~5.3) 
showed that the second derivative of $\theta=\theta_{n,1,\gamma}$ is $|\det|^{-1/2}\theta_{n-2,1,\gamma_{\psi}^{-1}\gamma}$, where $\psi$
is the character with respect to which the derivative is defined,
and furthermore, this is the highest nonzero derivative (\cite{Kable} Theorem~5.4). Hence the kernel of the Jacquet functor
$\theta|_{\widetilde{Y}_n}(U_{n-1,1})$ is equal, as a $\widetilde{Y}_n$-module, to the application of $\Phi^+\Psi^+$ to the second derivative (\cite{BZ2} 3.2 (e) and 3.5). Put $\theta_{n-2}=\theta_{n-2,1,\gamma_{\psi}^{-1}\gamma}$.
Thus
\begin{align*}
\theta|_{\widetilde{Y}_n}(U_{n-1,1})=\ind_{\widetilde{Y}_{n-1}U_{n-1,1}}^{\widetilde{Y}_n}(\delta_{Y_{n-1}}^{1/2}
\ind_{\widetilde{\GL}_{n-2}U_{n-2,1}}^{\widetilde{Y}_{n-1}(F)}(|\det|^{1/2}|\det|^{-1/2}\theta_{n-2})\otimes\psi).
\end{align*}
Since $\delta_{Y_{n-1}}|_{\GL_{n-2}}=|\det|$, in the notation of Section~\ref{section:geometric},
\begin{align*}
\theta|_{\widetilde{Y}_n}(U_{n-1,1})=\Phi_{\diamond}^+\Psi_{\diamond}^+(\pi_0),\qquad \pi_0=
|\det|^{1/2}\theta_{n-2}.
\end{align*}
Then as $\widetilde{B}_n^{\circ}$-modules
\begin{align}\label{eq:Bn circ modules begin}
\mathcal{L}^{n,1}_{c}(\theta|_{\widetilde{Y}_n})=\mathcal{L}^{n,2}_{b}
\Phi_{\diamond}^+\Psi_{\diamond}^+(\pi_0),\qquad b=(c_2,\ldots,c_{n-1}).
\end{align}
While restriction to $\widetilde{Y}_n$ enables us to apply the results of Section~\ref{section:geometric}, we seem to be
losing the last coordinate of $\widetilde{T}_n$. We explain how to remedy this. Since the pair of subgroups $\widetilde{B}_n^{\circ}$ and $\widetilde{Z}_n^{\mathe}$ are commuting in $\widetilde{\GL}_n$,
one can form the genuine representation $\omega_{\theta}\mathcal{L}^{n,1}_{c}(\theta|_{\widetilde{Y}_n})$ of
$p^{-1}(B_n^{\circ}Z_n^{\mathe})$, which is in fact $\mathcal{L}^{n,1}_{c}(\theta)|_{p^{-1}(B_n^{\circ}Z_n^{\mathe})}$.
If $n$ is odd, $p^{-1}(B_n^{\circ}Z_n^{\mathe})=\widetilde{B}_{n}$ and we recover $\mathcal{L}^{n,1}_{c}(\theta)$.

By virtue of Lemma~\ref{lemma:L of Y_n module} applied to the right-hand side of \eqref{eq:Bn circ modules begin}, if $k$ is defined by the lemma and $b^{k,\vdash}=(b_{k+1},\ldots,b_{n-2})$,
as $\widetilde{B}_n^{\circ}$-modules
\begin{align*}
\mathcal{L}^{n,1}_{c}(\theta|_{\widetilde{Y}_n})=
\mathrm{s.s.}\bigoplus_{m=\min(1,k)}^{k}\mathcal{E}_{n,m+2}
\mathcal{L}^{n-2,m}_{(0^{k-m},b^{k,\vdash})}(\pi_0).
\end{align*}
Then as $p^{-1}(B_n^{\circ}Z_n^{\mathe})$-modules
\begin{align}\label{eq:application of the Yn module lemma and omega theta}
\omega_{\theta}\mathcal{L}^{n,1}_{c}(\theta|_{\widetilde{Y}_n})=
\mathrm{s.s.}\bigoplus_{m=\min(1,k)}^{k}\omega_{\theta}\mathcal{E}_{n,m+2}
\mathcal{L}^{n-2,m}_{(0^{k-m},b^{k,\vdash})}(\pi_0).
\end{align}

Recall that $\theta'=\theta_{n,1,\gamma'}$. We take the derivative with respect to $\psi^{-1}$ (instead of $\psi$). Put $\theta'_{n-2}=\theta_{n-2,1,\gamma_{\psi^{-1}}^{-1}\gamma'}$, $\pi'_0=|\det|^{1/2}\theta'_{n-2}$, $\Phi_{\diamond}^+$ is defined using $\psi^{-1}$ and then
\eqref{eq:application of the Yn module lemma and omega theta} holds for $\theta'$, with $\pi'_0$. Next we see that according to \eqref{eq:action of u in geometric general n},
for any $\min(1,k)\leq m,\widetilde{m}\leq k$,
\begin{align}\label{eq:iso m and m tilde}
&(\mathcal{E}_{n,m+2}
\mathcal{L}^{n-2,m}_{(0^{k-m},b^{k,\vdash})}(\pi_0)\otimes
\mathcal{E}_{n,\widetilde{m}+2}
\mathcal{L}^{n-2,\widetilde{m}}_{(0^{k-\widetilde{m}},b^{k,\vdash})}(\pi'_0))_{U_{n-1,1}}\\\nonumber&=
\begin{cases}
\mathcal{E}_{n,m+2}^-(
\mathcal{L}^{n-2,m}_{(0^{k-m},b^{k,\vdash})}(\pi_0)\otimes
\mathcal{L}^{n-2,m}_{(0^{k-m},b^{k,\vdash})}(\pi'_0))&m=\widetilde{m},\\\nonumber
0&m\ne \widetilde{m}.
\end{cases}
\end{align}
Put
\begin{align*}
\Lambda(b)=\mathcal{L}^{n-2,m}_{(0^{k-m},b^{k,\vdash})}(\pi_0)\otimes
\mathcal{L}^{n-2,m}_{(0^{k-m},b^{k,\vdash})}(\pi'_0).
\end{align*}
This is a representation of $\widetilde{Q}_{1^{n-m-2},m}$ which factors through
$Q_{1^{n-m-2},m}$, therefore we will regard it as a representation of the latter.
\begin{remark}\label{remark:center on Lambda}
If one
takes the derivative of $\theta'$ with respect to $\psi_{\alpha}$, where $\psi_{\alpha}(x)=\psi(\alpha x)$ ($\alpha\in F^*$), instead of $\psi^{-1}$,
the isomorphism \eqref{eq:iso m and m tilde} is deduced using
\begin{align*}
&\ind_{\widetilde{Y}_{n-1}U_{n-1,1}}^{\widetilde{Y}_n}(
\ind_{\widetilde{\GL}_{n-2}U_{n-2,1}}^{\widetilde{Y}_{n-1}(F)}(|\det|^{1/2}\theta_{n-2,1,\gamma_{\psi_{\alpha}}^{-1}\gamma'})\otimes\psi_{\alpha})\\&\isomorphic
\ind_{\widetilde{Y}_{n-1}U_{n-1,1}}^{\widetilde{Y}_n}(
\ind_{\widetilde{\GL}_{n-2}U_{n-2,1}}^{\widetilde{Y}_{n-1}(F)}(|\det|^{1/2}\theta_{n-2,1,\gamma_{\psi^{-1}}^{-1}\gamma'})\otimes\psi^{-1}).
\end{align*}
To see this note that $(-\alpha^{-1},x)\gamma_{\psi_{\alpha}}(x)=\gamma_{\psi^{-1}}(x)$.
\end{remark}
We obtain the following equality of $B_n^{\circ}$-modules
\begin{align*}
(\mathcal{L}^{n,1}_{c}(\theta|_{\widetilde{Y}_n})\otimes
\mathcal{L}^{n,1}_{c}(\theta'|_{\widetilde{Y}_n}))_{U_{n-1,1}}=
\mathrm{s.s.}\bigoplus_{m=\min(1,k)}^{k}\mathcal{E}_{n,m+2}^-(\Lambda(b)).
\end{align*}
Put $i=m+2$, apply Lemma~\ref{lemma:Jacquet functor without character} to each summand and obtain, as $T_n^{\circ}$-modules,
\begin{align*}
&(\mathcal{L}^{n,1}_{c}(\theta|_{\widetilde{Y}_n})\otimes\mathcal{L}^{n,1}_{c}(\theta'|_{\widetilde{Y}_n}))_{N_{n}}=
\mathrm{s.s.}\bigoplus_{m=\min(1,k)}^{k}
\ind_{\rconj{w_i}T_{n-2}}^{T_n^{\circ}}(\delta_{Y_i}^{-1}\ \rconj{w_i}(\Lambda(b)_{N_{n-2}})).
\end{align*}
Here we used $N_{n}=N_{n-1}\ltimes U_{n-1,1}$. This equality yields the following identity
of $T_n^{\circ}Z_n^{\mathe}$-modules,
\begin{align*}
(\omega_{\theta}\mathcal{L}^{n,1}_{c}(\theta|_{\widetilde{Y}_n})\otimes
\omega_{\theta'}\mathcal{L}^{n,1}_{c}(\theta'|_{\widetilde{Y}_n}))_{N_n}=
\mathrm{s.s.}\bigoplus_{m=\min(1,k)}^{k}\omega_{\theta}\omega_{\theta'}\ind_{\rconj{w_i}T_{n-2}}^{T_n^{\circ}}(\delta_{Y_i}^{-1}\ \rconj{w_i}(\Lambda(b)_{N_{n-2}})).
\end{align*}
Thus for some $\min(1,k)\leq m\leq k$,
\begin{align}\label{eq:final hom as torus}
\Hom_{T_n^{\circ}Z_n^{\mathe}}(
\omega_{\theta}\omega_{\theta'}\ind_{\rconj{w_i}T_{n-2}}^{T_n^{\circ}}(\delta_{Y_i}^{-1}\ \rconj{w_i}(\Lambda(b)_{N_{n-2}}))
,\delta_{B_n}^{1/2}\eta_1\otimes\ldots\otimes\eta_n)\ne0.
\end{align}

Next we claim,
\begin{claim}\label{claim:t_x acts trivially}
For $x\in F^{*\mathe}$, the element $t_x=diag(I_{n-i},x,I_{i-2},x)\in T_n^{\circ}Z_n^{\mathe}$ acts on
\begin{align*}
\omega_{\theta}\omega_{\theta'}\ind_{\rconj{w_i}T_{n-2}}^{T_n^{\circ}}(\delta_{Y_i}^{-1}\ \rconj{w_i}(\Lambda(b)_{N_{n-2}}))
\end{align*}
via $\delta_{B_n}^{1/2}(t_x)$.
\end{claim}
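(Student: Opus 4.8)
The plan is to use the commuting factorization $t_x=s\cdot z$ with $z=xI_n$ and
\[
s=t_xz^{-1}=\mathrm{diag}(x^{-1}I_{n-i},1,x^{-1}I_{i-2},1).
\]
Here $z\in Z_n^{\mathe}$ because $x\in F^{*\mathe}$, so a lift of $z$ is central in $\widetilde{\GL}_n$; and $s$ lies in the inducing subgroup, being exactly the image of $x^{-1}I_{n-2}\in T_{n-2}$ under the identification $T_{n-2}\isomorphic\rconj{w_i}T_{n-2}$ recorded in the remark after Lemma~\ref{lemma:Jacquet functor without character}. Since $z$ has a central lift, $t_x$ lifts to a product of commuting elements, one in the inducing subgroup and one central, so the two actions below compose without a cocycle correction.

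First I would record that on $V:=\omega_{\theta}\omega_{\theta'}\ind_{\rconj{w_i}T_{n-2}}^{T_n^{\circ}}(\sigma)$, where $\sigma=\delta_{Y_i}^{-1}\ \rconj{w_i}(\Lambda(b)_{N_{n-2}})$, the factor $\widetilde{Z}_n^{\mathe}$ acts through $\omega_{\theta}\omega_{\theta'}$ by construction, while any element of the inducing subgroup $\rconj{w_i}T_{n-2}$ acts through the operator $\sigma$ evaluated at it: this is immediate from $(s\cdot f)(g)=f(gs)=f(sg)=\sigma(s)f(g)$, valid because $T_n^{\circ}$ is abelian. Hence $t_x$ acts on $V$ by $\sigma(s)\cdot(\omega_{\theta}\omega_{\theta'})(z)$.

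Next I would evaluate the two factors. On the one hand, $\sigma(s)=\delta_{Y_i}^{-1}(s)\cdot\Lambda(b)_{N_{n-2}}(x^{-1}I_{n-2})$; evaluating $\delta_{Y_i}$ on the $\GL_{i-1}$-block of the $Y_i$-part of $s$ gives $\delta_{Y_i}^{-1}(s)=|x|^{i-2}$, and since $x^{-1}I_{n-2}$ is central in $\GL_{n-2}$ — with $x\in F^{*\mathe}$ ensuring, when $n$ is even, that $x^{-1}I_{n-2}$ lies in $Z_{n-2}^2$, the domain of the central character of $\theta_{n-2}$ — it acts on the iterated Jacquet functor $\Lambda(b)_{N_{n-2}}$, where $\Lambda(b)$ is built from $\pi_0=|\det|^{1/2}\theta_{n-2}$ and $\pi'_0=|\det|^{1/2}\theta'_{n-2}$, by the scalar $\omega_{\pi_0}(x^{-1}I_{n-2})\,\omega_{\pi'_0}(x^{-1}I_{n-2})=|x|^{-(n-2)}\,\omega_{\theta_{n-2}}(x^{-1}I_{n-2})\,\omega_{\theta'_{n-2}}(x^{-1}I_{n-2})$, central characters passing unchanged through the unnormalized Jacquet functors. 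On the other hand, \eqref{eq:exceptionl character} shows $(\omega_{\theta}\omega_{\theta'})(z)=\gamma(x)\gamma'(x)$, the two genuine $\mu_2$-contributions canceling.

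Finally I would combine the pieces. Using $\theta_{n-2}=\theta_{n-2,1,\gamma_{\psi}^{-1}\gamma}$, $\theta'_{n-2}=\theta_{n-2,1,\gamma_{\psi^{-1}}^{-1}\gamma'}$ and $\gamma_{\psi^{-1}}=\gamma_{\psi}^{-1}$, the $\gamma_{\psi}^{\pm1}$-contributions cancel and the remaining four pseudo-character factors give $\gamma(x^{-1})\gamma'(x^{-1})\gamma(x)\gamma'(x)=(x,x)^{2\lfloor n/2\rfloor}=1$ (when $n$ is even $x$ is a square, so all Hilbert symbols involving $x$ vanish and these factors are already trivial). What remains is the power $|x|^{i-2}\cdot|x|^{-(n-2)}=|x|^{i-n}$, which equals $\delta_{B_n}^{1/2}(t_x)$, proving the claim. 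The only delicate point is the metaplectic bookkeeping: one must check that $x\in F^{*\mathe}$ is exactly what places $z=xI_n$ in the center of the cover and $x^{-1}I_{n-2}$ in the domain of the central character of the even-rank exceptional representation, and that all $\mu_2$- and Hilbert-symbol cocycle contributions cancel, so the answer is the honest real scalar $\delta_{B_n}^{1/2}(t_x)$ with no stray sign.
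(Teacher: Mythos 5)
Your proof is correct and takes essentially the same route as the paper: the same factorization $t_x = z_x\cdot d_x$ with $z_x = xI_n$ central and $d_x = \rconj{w_i}(x^{-1}I_{n-2})$ in the inducing subgroup, the same evaluation of the central characters $\omega_{\theta}\omega_{\theta'}$ and $\omega_{\theta_{n-2}}\omega_{\theta'_{n-2}}$ together with the $|\det|^{1/2}$-twists and $\delta_{Y_i}^{-1}$, and the same cancellation of $\gamma_{\psi}$ against $\gamma_{\psi^{-1}}$ and of the Hilbert-symbol contributions to arrive at $|x|^{i-n}=\delta_{B_n}^{1/2}(t_x)$.
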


The claim and \eqref{eq:final hom as torus} immediately imply $\eta_{n-i+1}(x)\eta_n(x)=1$ for all $x\in F^{*\mathe}$.
Now if $n$ is odd,
we deduce $\eta_{n-i+1}=\eta_n^{-1}$ (!). In the even case we find $\eta_{n-i+1}^2=\eta_n^{-2}$ and according to our assumption
(!!) $\eta_n^2=1$, we obtain $\eta_{n-i+1}^2=1$.

We proceed to apply the induction hypothesis. After restricting \eqref{eq:final hom as torus} to $T_{n}^{\circ}$ and applying Frobenius
reciprocity,
\begin{align}\label{eq:final hom as torus after Frobenius}
\Hom_{\rconj{w_i}T_{n-2}}(\delta_{Y_i}^{-1}\ \rconj{w_i}(\Lambda(b)_{N_{n-2}})
,\delta_{B_n}^{1/2}\eta_1\otimes\ldots\otimes\eta_n)\ne0.
\end{align}
Using
\begin{align*}
&\delta_{B_n}(\rconj{w_i}t)=\prod_{j=1}^{n-i}|t_j|\delta_{B_{n-2}}(t)=|\det{t}|\delta_{Y_i}^{-1}(\rconj{w_i}t)\delta_{B_{n-2}}(t), \qquad \forall t\in T_{n-2},
\end{align*}
we obtain
\begin{align*}
&\Hom_{T_{n-2}}(|\det|^{-1}\Lambda(b)_{N_{n-2}},\delta_{B_{n-2}}^{1/2}\eta^{(n-2)})\ne0,\quad
\eta^{(n-2)}=\eta_1\otimes\ldots\otimes\eta_{n-i}\otimes\eta_{n-i+2}\otimes\ldots\otimes\eta_{n-1}.
\end{align*}
Note that $2\leq i\leq n$ ($i=m+2$). 
Thus
\begin{align*}
\Hom_{T_{n-2}}((\mathcal{L}^{n-2,m}_{(0^{k-m},b^{k,\vdash})}(\theta_{n-2})\otimes
\mathcal{L}^{n-2,m}_{(0^{k-m},b^{k,\vdash})}(\theta'_{n-2}))_{N_{n-2}},\delta_{B_{n-2}}^{1/2}\eta^{(n-2)})\ne0.
\end{align*}

According to Corollary~\ref{corollary:composition series of Jacquet of tensor}, as $B_{n-2}$-modules
\begin{align*}
(\mathcal{L}^{n-2,m}_{(0^{k-m},b^{k,\vdash})}(\theta_{n-2})\otimes
\mathcal{L}^{n-2,m}_{(0^{k-m},b^{k,\vdash})}(\theta'_{n-2}))_{N_{n-2}}=\mathrm{s.s.}\bigoplus_{c^{(n-2)}}
(\mathcal{L}^{n-2,1}_{c^{(n-2)}}(\theta_{n-2})\otimes\mathcal{L}^{n,1}_{c^{(n-2)}}(\theta'_{n-2}))_{N_{n-2}},
\end{align*}
where $c^{(n-2)}$ varies as described in the corollary.
Therefore for some $c^{(n-2)}\in\{0,1\}^{n-3}$,
\begin{align*}
\Hom_{T_{n-2}}((\mathcal{L}^{n-2,1}_{c^{(n-2)}}(\theta_{n-2})\otimes\mathcal{L}^{n,1}_{c^{(n-2)}}(\theta'_{n-2}))_{N_{n-2}},\delta_{B_{n-2}}^{1/2}\eta^{(n-2)})\ne0.
\end{align*}
This is condition \eqref{eq:assumption main proposition for only if characterization} for $n-2$ and implies, by Proposition~\ref{proposition:only if direction, for any principal series} which is assumed to hold for $n-2$, that $\eta^{(n-2)}$ satisfies $(\star)$ and then so does $\eta$. The proof of
the lemma is complete.
\begin{proof}[Proof of Claim~\ref{claim:t_x acts trivially}]
Let $x\in F^{*\mathe}$ and write $t_x=z_xd_x$ with
\begin{align*}
z_x=xI_n\in Z_n^{\mathe},\quad d_x=diag(x^{-1}I_{n-i},1,x^{-1}I_{i-2},1).
\end{align*}
Because $\rconj{w_i^{-1}}d_x=diag(x^{-1}I_{n-2},I_2)\in Z_{n-2}^{\mathe}$,
$\omega_{\theta_{n-2}}(\mathfrak{s}(\rconj{w_i^{-1}}d_x))=\gamma_{\psi}^{-1}(x^{-1})\gamma(x^{-1})$. Hence 
$d_x$ acts on $\rconj{w_i}(\Lambda(b)_{N_{n-2}})$
by
\begin{align*}
|x|^{-n+2}\gamma_{\psi}^{-1}(x^{-1})\gamma(x^{-1})\gamma_{\psi^{-1}}^{-1}(x^{-1})\gamma'(x^{-1})=
|x|^{-n+2}\gamma(x^{-1})\gamma'(x^{-1}).
\end{align*}
($\gamma_{\psi^{-1}}=\gamma_{\psi}^{-1}$.) The element $z_x$ acts on
$\omega_{\theta}\omega_{\theta'}\ind_{\rconj{w_i}T_{n-2}}^{T_n^{\circ}}(\delta_{Y_i}^{-1}\ \rconj{w_i}(\Lambda(b)_{N_{n-2}}))$
by $\omega_{\theta}\omega_{\theta'}(\mathfrak{s}(z_x))=\gamma(x)\gamma'(x)$. Since $\delta_{Y_i}^{-1}(d_x)=|x|^{i-2}$
and
\begin{align*}
\gamma(x)\gamma'(x)\gamma(x^{-1})\gamma'(x^{-1})=1
\end{align*}
($\gamma(x)\gamma(x^{-1})=(x,x^{-1})^{\lfloor n/2\rfloor}$),
$t_x$ acts on $\omega_{\theta}\omega_{\theta'}\ind_{\rconj{w_i}T_{n-2}}^{T_n^{\circ}}(\delta_{Y_i}^{-1}\ \rconj{w_i}(\Lambda(b)_{N_{n-2}}))$ via
$|x|^{i-n}=\delta_{B_n}^{1/2}(t_x)$.
\end{proof}
\end{proof}

\subsection*{Acknowledgments}
I wish to express my gratitude to Erez Lapid for suggesting this project to me. 
I thank Anthony Kable for helpful conversations.

\providecommand{\bysame}{\leavevmode\hbox to3em{\hrulefill}\thinspace}
\providecommand{\MR}[1]{}
\providecommand{\MRhref}[2]{%
  \href{http://www.ams.org/mathscinet-getitem?mr=#1}{#2}
}
\providecommand{\href}[2]{#2}

\end{document}